
\documentclass[11pt]{amsart}
\usepackage{amsfonts}
\usepackage{amsmath,graphicx,amssymb,amsthm}
\usepackage{amsmath}
\usepackage{amssymb}
\usepackage{graphicx}
\usepackage{comment}
\usepackage[backref]{hyperref}
\setcounter{MaxMatrixCols}{30}
\providecommand{\U}[1]{\protect\rule{.1in}{.1in}}

\usepackage[all]{xypic} 
\usepackage{tikz-cd}





\def\U{\mathcal U}

\def\amslatex{$\mathcal{A}\kern-.1667em\lower.5ex\hbox{$M$}\kern-.125em\mathcal{S}$-\LaTeX}


\newcommand{\abs}[1]{\left\vert#1\right\vert}
\newcommand{\norm}[1]{\left\Vert#1\right\Vert}


\newtheorem{set}{set}[section]

\newcommand{\enifed}{\mathrel{\hbox{$\equiv$\hskip -.90em \lower .47ex \hbox{$\rightharpoondown$}}}}

\newtheorem{theorem}[set]{Theorem}
\theoremstyle{plain}

\newtheorem{corollary}[set]{Corollary}

\newtheorem{definition}[set]{Definition}

\newtheorem{lemma}[set]{Lemma}

\newtheorem{proposition}[set]{Proposition}
\newtheorem{remark}[set]{Remark}




\headheight=10pt     \topmargin=0pt \textheight=642pt
\addtolength{\textheight}{3mm} \addtolength{\textwidth}{11mm}
\addtolength{\oddsidemargin}{-8mm}

\allowdisplaybreaks[4]





\begin{document}
\title{On Noncommutative Joinings}


\author{Jon Bannon}
\address{Siena College Department of Mathematics, 515 Loudon Road, Loudonville,
NY\ 12211, USA}
\email{jbannon@siena.edu}
\author{Jan Cameron}
\address{Department of Mathematics and Statistics, Vassar College, Poughkeepsie, NY 12604, USA}
\email{jacameron@vassar.edu}
\author{Kunal Mukherjee}
\address[3]{Indian Institute of Technology Madras, Chennai 600 036, India}
\email{kunal@iitm.ac.in}

\begin{abstract}
This paper extends the classical theory of joinings of measurable dynamical systems to the noncommutative setting from several interconnected points of view. Among these is a particularly fruitful identification of joinings with equivariant quantum channels between $W^{\ast}$--dynamical systems that provides noncommutative generalizations of many fundamental results of classical joining theory. We obtain fully general analogues of the main classical disjointness characterizations of ergodicity, primeness and mixing phenomena. 
\end{abstract}
\maketitle
\section{Introduction}
Ergodicity and mixing properties of group actions play a central role in the structure theory of von Neumann algebras, unifying the study of dynamics in the commutative and noncommutative settings, while encoding important algebraic and analytical information about von Neumann algebras.  The notion of a joining of two dynamical systems has proved to be fundamental to the study of these phenomena in both the commutative and noncommutative contexts. The earliest appearance of the joining concept in the noncommutative context is recognized as Markov maps in the notion of \textit{stationary coupling} in \cite{SaTh}. The more recent literature contains several papers developing a theory of joinings in the noncommutative setting understood as invariant states on algebraic tensor products \cite{Du,Du2,Du3}. In the above papers, joinings of a system with its ``mirror image" were considered (cf. Def. 2.2 of \cite{Du2}), as well as joinings with modular invariant systems (cf. Def. 2.3 of \cite{Du3}), for technical reasons. The present paper develops a point of view on noncommutative joinings that brings together those found in \cite{SaTh} and \cite{Du,Du2,Du3} in a way that regards the appearance of the mirror image and modular invariance as crucial. In the context of finite von Neumann algebras, the point of view we adopt was first developed in [KLP], where the associated disjointness was employed to establish generic turbulence results for group actions on the hyperfinite $\rm{II}_{1}$ factor. Extending the point of view taken in [KLP], in this paper we develop a general theory of joinings for noncommutative dynamical systems arising from state-preserving actions on von Neumann algebras.

H. Furstenberg \cite{Fu} developed what can be viewed as an arithmetic of dynamical systems, in which systems are analogous to natural numbers, and subsystems\footnote{Since this paper primarily aims to transport ideas from ergodic theory to noncommutative operator algebras, we reserve the word \textit{factor} to mean von Neumann algebra with trivial center.} are analogous to divisors. Furstenberg calls two systems \textit{relatively prime} if their only common subsystem is trivial, and \textit{disjoint} if whenever the two systems appear as subsystems of a third, their product also appears as a subsystem. Furstenberg asks whether these are equivalent, deeming disjointness the more useful notion and applying it to classify dynamical systems. D. Rudolph's negative answer in the measure--theoretic setting to Furstenberg's question marks the birth of the theory of joinings, his construction naturally providing counterexamples to many questions in ergodic theory \cite{Ru}. The theory has since been enriched by the work of many authors, e.g. \cite{Fu2}, \cite{Ve}, \cite{Ra}, \cite{JR}, \cite{Ho}, \cite{Ki}, \cite{GlHoRu}, \cite{Rh}, \cite{JLM} and \cite{LPT}. We refer the reader to \cite{Gl} for an overview of the classical theory of joinings.

This paper continues the study of noncommutative joinings in the spirit of \cite{KLP}, defining joinings of $W^{*}$--dynamical systems as quantum channels that simultaneously intertwine the group actions and modular time evolutions of the systems. This extends the definition of joining in \cite{KLP} and, as we will show, leads to sharp noncommutative analogues of a number of landmark classical results in ergodic theory. Our approach overcomes certain technical complications associated with correspondences that arise in the setting of infinite von Neumann algebras (see, for example, \cite{Co} and \cite{Fa}), by leveraging the symmetry of the standard form. Furthermore, the completely positive maps associated to joinings in the present sense are shown to form a subclass of the Markov maps originally considered in [AD], namely the subclass of equivariant Markov maps. We thus take a dynamical perspective on coherent equivariant quantum channels and a quantum information--theoretic interpretation of joinings, reminiscent of results found in the recent physics literature \cite{KuSc, Ma, Sc}. Before discussing specifics of our results, we briefly place them in physical context.

The appearance of \textit{equivariance} in the connection between joinings and quantum channels established here marks relationships with basic physics. Quantum channels that are equivariant with respect to a global symmetry group play a role in the statistical mechanics of open quantum systems \cite{Hol1}, \cite{Hol2}, \cite{Hol3}. Furthermore, in quantum information theory, equivariance of a quantum channel facilitates the computation of its classical capacity \cite{GHGP}. Often in quantum field theory a scattering operator (S-matrix) is a unitary intertwiner of a representation of the Poincar\'e group that fixes the vacuum, and implements a Poincar\'e equivariant scattering automorphism of the algebra of bounded operators on the Hilbert space of the interaction. This provides an example of an equivariant quantum channel that encodes a lot of information about a quantum field theory (cf. pp. 349--350 of \cite{Se}). In fact, in \cite{Le1}, \cite{Le2} a low--dimensional model of an asymptotically complete local quantum field theory is recovered entirely from its two--particle S--matrix, indicating that under certain circumstances the S--matrix encodes enough data to reconstruct a vacuum representation of the local net of type $\rm{III}_{1}$ factors of a quantum field theory.

We note that the Markov maps introduced in \cite{AD} (in finite dimensions and without equivariance) were employed in \cite{HaMu} to settle the Asymptotic Quantum Birkhoff Conjecture, renewing interest in operator methods in quantum information theory while establishing new connections of Markov maps with the Connes Embedding Conjecture in operator algebras. Also in connection with quantum information theory, Tsirelson's problem of whether the ``external'' and ``internal'' models of a pair of compatible quantum systems give rise to the same set of correlations has recently been shown to be equivalent to the Connes Embedding Conjecture \cite{JNPPSW}, \cite{Oz}. The ``external'' model is germane to the nonrelativistic quantum mechanics of open systems and the ``internal'' model arises naturally in local quantum field theory. In the latter setting, a physically reasonable nuclearity assumption (the split property) implies that every local algebra of observables is the separably acting hyperfinite type $\rm{III}_1$ factor, consequently controlling the number of local degrees of freedom enough to yield an affirmative answer to Tsirelson's Problem for causally complementary local algebras of observables \cite{BAF}. Despite this, in a vacuum representation of a local net where the Reeh--Schlieder Theorem holds,  there is a dense set of entangled states on the von Neumann algebra generated by the local algebras of any two causally complementary regions of spacetime, even if one considers non--interacting free quantum fields \cite{HaCl}. Furthermore, an arbitrary entangled state in such a theory maximally violates the BCHS inequality \cite{SuWe}, suggesting that entangled states on $\rm{III}_1$ factors arising from pairs of causally complementary local observable algebras in algebraic quantum field theory provide extreme test cases of Tsirelson's Problem. If a local net of observable algebras additionally satisfies Haag duality, then the restriction of any state from the algebra of quasilocal observables to the $\ast$--subalgebra generated by a local observable algebra and its commutant yields a joining in our sense. The study of joinings in the type $\rm{III}$ setting thus may be viewed as aiming to refine our understanding of quantum correlations. 

We now describe the results of the present paper.

A state on the algebraic tensor product of one von Neumann algebra with the opposite of another yields a GNS representation that is naturally regarded as a cyclic bimodule of the two von Neumann algebras. We note that at the $C^{*}$-algebra level such states play an important role in Woronowicz's study of purification of factor states in \cite{Wo}. If the marginals (restrictions to the tensor factors) of the state are both normal, then the resulting bimodule is binormal, i.e. the bimodule is a \textit{correspondence}. If, in addition, a separable locally compact group acts on each of the von Neumann algebras in a way that preserves each of the marginals, then these group actions intertwine their respective modular automorphism groups. In analogy with the classical case, if the state preserves the doubled action of the group on the tensor product (respecting the opposite) as well as the analogously doubled modular group action, we regard the state as a joining of the two dynamical systems of the tensor factors. The need for the opposite algebra, and hence the bimodule point of view, is fundamental for the results we prove in this paper, even though it seems like only slight formal shift from the classical definition. For the details of the above discussion, see Theorem \ref{BimoduleFromJoining}. This additional joining data on the state is encoded as extra structure on the above correspondence that is manifested in properties of the distinguished cyclic vector: This vector generates copies of the standard left (and respectively right) modules of the two von Neumann algebras and is fixed by unitary representations induced by the (doubled) group and modular actions (Theorem \ref{JoiningFromBimodule}). Transporting the extra structure to the completely positive map associated to the cyclic correspondence reveals that joinings are precisely the equivariant maps that are Markov with respect to the marginal states (Theorems \ref{Popa's Lemma copy(1)} and \ref{CharacterizeJoins}), and hence are precisely the equivariant u.c.p. maps that admit adjoints (Theorem \ref{AdjointisEquivariant} and preceding remarks).

The interpretation of joining as equivariant Markov map leads to natural proofs that ergodic systems are precisely those that are disjoint from all trivial systems (Theorem \ref{Ergodic}) and that prime systems (systems with only trivial subsystems) are those whose nonidentity self--joinings fix only scalars (Theorem \ref{Berg}). We provide a direct analogue of the classical topological characterization of mixing in terms of joinings (Theorem \ref{Mixing}). More substantially, though, our perspective on joinings provides the first full joining characterization of weak mixing in the noncommutative setting, something that has eluded capture in other recent attempts to develop a noncommutative joining theory \cite{Du}, \cite{Du2}, \cite{Du3}.  Classically, a system is weakly mixing if and only if it is disjoint from all compact systems, i.e. those systems whose Koopman representations admit only precompact orbits. In our setting, an equivariant Markov map always induces an intertwiner of the analogue of the Koopman representation that preserves such precompactness and hence yields a finite--dimensional subrepresentation, and consequently weakly mixing systems must be disjoint from all compact systems (Theorem \ref{WmDisjointCompact}).

The fact that disjointness from all compact systems implies that a system is weakly mixing (Theorem \ref{CharacterizeWeakMixing}) is a result concerning rigidity in the Ergodic Hierarchy. Such results have an interesting history apart from joining theory. The first such rigidity result is due to Halmos \cite{Hal}: Any ergodic automorphism of a compact abelian group is mixing, and its spectral measure is Lebesgue with infinite multiplicity. Another such result was proved by Jak\u{s}i\'{c} and Pillet \cite{JP}: An ergodic action of $\mathbb{R}$ on any von Neumann algebra preserving a faithful normal state is automatically weakly mixing. (Particularly, this implies that the action of the modular automorphism group of a faithful normal state on a von Neumann algebra is weakly mixing whenever it is ergodic.) In the present paper we obtain a rigidity result for finite--dimensional invariant subspaces of the canonical unitary representation on the GNS Hilbert space that is induced by a state--preserving ergodic action of a given separable locally compact group. This rigidity is rooted in the symmetry of the standard form and the respective uniqueness of the polar decomposition and spectral decomposition of affiliated operators. In Theorem \ref{eigencase}, we show that any eigenvector orthogonal to the vacuum in the GNS Hilbert space of an ergodic action comes from a trace--zero unitary in the centralizer of the state, and then establish in Theorem \ref{InvaraintSubspaceInsideCentralizer} that every finite--dimensional invariant subspace of the GNS Hilbert space under the hypothesis of ergodicity must lie in the image of this centralizer.  

The proof of Theorem \ref{OtherwayWmDisjointCompact} employs the above rigidity to manufacture a joining of an ergodic system whose induced unitary representation has a finite--dimensional invariant subspace with an ergodic system whose underlying von Neumann algebra is either finite--dimensional or abelian. Thus, under the hypothesis of ergodicity, we can locate a large region of randomness of the action. We point out that although most physical systems are not ergodic, almost all (in a Baire category sense) Hamiltonian systems are not integrable, and consequently exhibit randomness on large parts of their phase space. It is therefore interesting physically to be able to locate such chaotic parts of a dynamical system.

The layout of the paper is as follows. In \S2 we provide preliminary background. In \S3 we define joining as a state and characterize joinings as pointed correspondences and then in \S4 characterize joinings as equivariant Markov maps. Basic examples appear in \S5. Our results on ergodicity and mixing properties appear in \S6. Consequences of the weak mixing results are discussed in \S7.

\vspace{5mm}
\noindent\textbf{Acknowledgements:}
The authors thank David Kerr for suggesting that we investigate the basic connection between bimodules and joinings. Work on this paper was initiated during a visit of KM to Vassar College and Siena College in 2012, partially supported by Vassar's Rogol Distinguished Visitor program.  We thank the Rogol Fund for this support.  JC's research was partially supported by a research travel grant from the Simons Foundation, and by Simons Foundation Collaboration Grant for Mathematicians \#319001. JB thanks Liming Ge and the Academy of Mathematics and Systems Science of the Chinese Academy of Sciences for their hospitality and support. KM thanks Serban \c{S}tr\v{a}til\v{a} for helpful conversations. The authors thank Ken Dykema, David Kerr, Vern Paulsen, and an anonymous referee for helpful comments on earlier versions of this paper. Finally, the authors thank the anonymous referees of this paper for their careful reading of the paper and subsequent extraordinarily helpful suggestions, including an alternative approach to Theorem \ref{InvaraintSubspaceInsideCentralizer}.    


\section{Preliminaries}

Let $M$ be a von Neumann algebra with a faithful normal state $\varphi$. Denote by $L^{2}(M,\varphi)$ and $\Omega_{\varphi}$ the associated GNS Hilbert space and its canonical unit cyclic and separating vector.

\subsection{Tomita-Takesaki Theory}
We recall, without proof, the following facts that are needed in the sequel. The conjugate--linear map defined by $S_{0,\varphi}x\Omega_{\varphi}=x^{\ast}\Omega_{\varphi}$ for all $x\in M$ is closable with closure $S_{\varphi}$ having polar decomposition $J_{\varphi}\Delta_{\varphi}^{1/2}$. In fact, the adjoint $S_{\varphi}^{\ast}$ is the closure of the closable linear map on $L^{2}(M,\varphi)$ defined by $F_{0,\varphi}x^{\prime}\Omega_{\varphi}=(x^{\prime})^{\ast}\Omega_{\varphi}$ for $x^{\prime}\in M^{\prime}$,
and the polar decomposition of $S_{\varphi}^{\ast}$ is $J_{\varphi}%
\Delta_{\varphi}^{-1/2}$. The conjugate--linear map $J_{\varphi}:L^{2}(M,\varphi) \rightarrow L^{2}(M,\varphi)$ satisfies $J_{\varphi}^{2}=1$ and $\langle J_{\varphi}\xi,J_{\varphi}\eta\rangle=\langle\eta,\xi\rangle$ for all $\xi,\eta\in L^{2}(M,\varphi)$. It follows that $J_{\varphi}=J_{\varphi}^{\ast}$ as a conjugate--linear map. Tomita's modular operator is the positive self--adjoint operator $\Delta_{\varphi}=S_{\varphi}^{\ast}S_{\varphi}$. The operator $\Delta_{\varphi}$ is invertible and satisfies
$J_{\varphi}\Delta_{\varphi}J_{\varphi}=\Delta_{\varphi}^{-1}$, as well as $S_{\varphi}=J_{\varphi}\Delta_{\varphi
}^{1/2}=\Delta_{\varphi}^{-1/2}J_{\varphi}$ and $S_{\varphi}^{\ast}%
=J_{\varphi}\Delta_{\varphi}^{-1/2}=\Delta_{\varphi}^{1/2}J_{\varphi}$. Furthermore, $\Delta_{\varphi}^{it}J_{\varphi}=J_{\varphi}\Delta_{\varphi}^{it}$ and $\Delta_{\varphi}^{it}\Omega_{\varphi}=J_{\varphi}\Omega_{\varphi}=\Omega_{\varphi}$ for all $t\in\mathbb{R}$. By the fundamental theorem of Tomita and Takesaki, $\Delta_{\varphi}^{it}M\Delta_{\varphi}^{-it}=M$
for all $t\mathbb{\in R}$, and $J_{\varphi}MJ_{\varphi}=M^{\prime}$. Recall that $\sigma_{t}^{\varphi}(x)=\Delta_{\varphi}^{it}x\Delta_{\varphi}^{-it}$ for all $x\in M$ defines the modular automorphism of $M$ associated to $t \in \mathbb{R}$. For more detail we refer the reader to \cite{Str}, \cite{Ta2}.

In this paper we frequently work with the centralizer $$M^{\varphi}=\{x\in M:\varphi(xy)=\varphi(yx)\text{ } \forall y\in M\}$$ of the state $\varphi$. The centralizer $M^{\varphi}$ coincides with the set of fixed points of the modular automorphism group $\{\sigma_{t}^{\varphi}\}_{t\in \mathbb{R}}$. If $M$ is a type $\rm{III}_{\lambda}$ factor with $\lambda \neq 1$ then $M^{\varphi}$ contains a maximal abelian $*-$ subalgebra of $M$. In contrast, there exist type $\rm{III}_{1}$ factors $M$ such that $M^{\varphi}=\mathbb{C}1$.

\subsection{The Opposite Algebra of $M$}

What follows requires the opposite algebra $M^{op}=M^{\prime}\cap\mathbf{B}(L^{2}(M,\varphi))$. The
map $x\mapsto x^{op}=J_{\varphi}x^{\ast}J_{\varphi}$ is a $\ast$%
--antiisomorphism of $M$ with $M^{op}$. By the double commutant theorem
$(M^{op})^{op}=M$. The state $\varphi$ on $M$ is the restriction to $M$ of the
vector state $\omega_{\Omega_{\varphi}}$ of $\mathbf{B}(L^{2}(M,\varphi))$.
Let $\varphi^{op}$ denote the restriction of $\omega_{\Omega_{\varphi}}$ to
$M^{op}$. It follows that $\varphi^{op}(x^{op})=\varphi(x)$ for all $x\in M$.
Let $ \theta \in Aut(M,\varphi)$, i.e. $ \theta $ is an automorphism of $M$ such that $\varphi\circ \theta =\varphi$. 

Since $M$
is in standard form, the automorphism $ \theta $ has a canonical implementation by a unitary
$U\in\mathbf{B}(L^{2}(M,\varphi))$ that fixes $\Omega_{\varphi}$, namely the unitary extending $U(x\Omega_{\varphi})=\theta(x)\Omega_{\varphi}$ for $x\in M$. In this paper this choice of unitary implementing a state-preserving automorphism will be implicitly made whenever possible.  It is a basic fact that if $UMU^{\ast
}=M$ then $UM^{op}U^{\ast}=M^{op}$. We hence define the opposite automorphism
$ \theta ^{op}$ of $M^{op}$ as $ \theta ^{op}(x^{op})=Ux^{op}U^{\ast}$ for all
$x\in M$. Then $ \theta ^{op}\in Aut(M^{op},\varphi^{op})$. It is clear that $S_{\varphi}=US_{\varphi}U^{*}$ and hence by polar decomposition $J_{\varphi}=UJ_{\varphi}U^{*}$ and $\Delta^{1/2}_{\varphi}=U\Delta^{1/2}_{\varphi}U^{*}$, from which it readily follows that $\theta(x)=(\theta^{op}(x^{op}))^{op}$ for all $x \in M$. If $\beta: G \rightarrow Aut(M,\varphi)$ is a group action, then the
opposite action $\beta^{op}: G \rightarrow Aut(M^{op},\varphi^{op})$ is defined by $\beta_{g}^{op}(y^{op})=(\beta_{g}(y))^{op}$ for each $g\in G$ and $y\in M$.

\subsection{Pointed Correspondences}

Let $N$ and $M$ be von Neumann algebras. A pointed $N$--$M$ correspondence
$(\mathcal{H},\xi)$ is a binormal $N$--$M$ Hilbert bimodule $\mathcal{H}=$
$_{N}\mathcal{H}_{M}$ with a distinguished unit cyclic vector $\xi$. Every
pointed correspondence can be obtained by the Stinespring construction applied
to a unital completely positive map $\Phi:N\rightarrow M$.
In \S3 and \S4 below we develop the necessary correspondence theory for von Neumann algebras of arbitrary type, with respect to given faithful normal states in keeping with the ergodic theory picture. To better facilitate the future crossover of techniques from the study of finite von Neumann algebras, we have worked to stay close to the contemporary notation of \cite{Po} in what follows. A benefit of this choice is that it highlights the subtle technical differences that must be overcome in order to handle factors of arbitrary type. 

We note that an $N$--$M$ correspondence alternatively may be viewed as a pair of commuting $*-$representations $\pi_{N}$ and $\pi_{M^{op}}$ that encode, respectively, the left $N$ and right $M$ actions in the bimodule picture. This notation will be used throughout the paper to denote these actions wherever confusion is possible.

\subsection{$W^{\ast}$--Dynamical Systems}

A $W^{\ast}$--dynamical system is denoted by $\mathfrak{N}=(N,\rho,\alpha, G )$, with $N$ a von Neumann algebra, $\rho$ a faithful
normal state on $N$ and $\alpha$ a strongly continuous action of a separable
locally compact group $ G $ on $N$ by $\rho$--preserving automorphisms. In
the sequel let $\mathfrak{N}=(N,\rho,\alpha, G )$ and $\mathfrak{M}
=(M,\varphi,\beta, G )$ be $W^{\ast}$--dynamical systems. We remark that although $G$ is assumed separable and locally compact, many of the results below are valid for a much larger class of groups. 

Since $ G $
acts by state--preserving automorphisms,
for each $g\in G $ the corresponding automorphism $\alpha_{g}$ $($resp.
$\beta_{g})$ is implemented on $L^{2}(N,\rho)$ $($resp. $L^{2}(M,\varphi))$ by
a unitary $U_{g}$ $($resp. $V_{g})$. It is easy to check that $\pi_{U}:g\mapsto U_{g}$
$($resp. $\pi_V:g\mapsto V_{g})$ is a strongly continuous unitary representation of $ G $
on $L^{2}(N,\rho)$ $($resp. $L^{2}(M,\varphi))$. Similarly when
the action of $ G $ is replaced by the action of $\mathbb{R}$ by modular automorphisms the associated unitary representation on $L^{2}(N,\rho)$
$($resp. $L^{2}(M,\varphi))$ is $\pi_{{\Delta}_{\rho}}:t\mapsto\Delta_{\rho}^{it}$ $($resp.
$\pi_{{\Delta}_{\varphi}}:t\mapsto\Delta_{\varphi}^{it})$ for $t\in\mathbb{R}$. 

If $\alpha$ is an automorphism of $M$ then $\sigma_{t}^{\varphi\circ\alpha}=\alpha^{-1}\circ\sigma_{t}^{\varphi}\circ\alpha$ for all $t\in\mathbb{R}$ and consequently $\varphi\circ\alpha=\varphi$ implies that $\alpha$ commutes with every modular automorphism $\sigma_{t}^{\varphi}$ (cf. Theorem 1 of \cite{HeTa}).

\subsection{Notation}

If there is danger of confusion, we sometimes label the inner product and norm
of a Hilbert space $\mathcal{H}$ as $\langle\cdot,\cdot\rangle_{\mathcal{H}}$
and $\norm{\cdot}_{\mathcal{H}}$. For Hilbert spaces obtained via the GNS (resp.
Stinespring) construction of a state $\varphi$ (resp. completely positive map
$\Phi$) we will denote the inner product and norm by $\langle\cdot
,\cdot\rangle_{\varphi}$ and $\norm{\cdot}_{\varphi}$ (resp. $\langle\cdot
,\cdot\rangle_{\Phi}$ and $\norm{\cdot}_{\Phi}$). There are several uses of `$J$' in the paper, which will be distinguished using subscripts. The reader should beware. Finally, for a set of operators $\mathcal{S}$, let $vN (\mathcal{S})$ denote the von Neumann algebra $(\mathcal{S} \cup \mathcal{S}^{*})''$ in what follows.


\section{Joinings and Correspondences}

The notion of doubling in von Neumann algebras as manifested in the theory
correspondences is a conceptual breakthrough, due to Connes, that is immensely
valuable and is at the heart of the subject. We introduce the following notion of joining for $W^{\ast}$--dynamical systems that incorporates the time evolution of the systems and is compatible with the theory of correspondences.

We have attempted to present the following foundational results with minimal technical artifice. Many of these results admit simpler proofs when all of the von Neumann algebras in question are finite. The passage from the finite to infinite setting is largely possible because of the symmetry properties of the standard form, and requires the usual care needed if one is working in the non-tracial setting.   

We point out that the modular invariance requirement in Eq. \eqref{Eq: Diagonal Action Preservation} below may now seem perfunctory, but it is critical for obtaining fully noncommutative analogues of the classical joining characterizations of mixing properties. This is evident from \S6 onward.   

\begin{definition}
\label{DefinitionJoiningasState}A \textbf{joining} of the $W^{\ast}
$--dynamical systems $\mathfrak{N}$ and $\mathfrak{M}$ is a state $\omega$ on
the algebraic tensor product $N\odot M^{op}$ satisfying
\begin{align}
\omega(x\otimes1_{M}^{op})  &  =\rho(x),\label{Eq: Pushforwards}\\
\omega(1_{N}\otimes y^{op})  &  =\varphi(y),\nonumber
\end{align}
and%
\begin{align}
\omega\circ(\alpha_{g}\otimes\beta_{g}^{op})  &  =\omega,
\label{Eq: Diagonal Action Preservation}\\
\omega\circ(\sigma_{t}^{\rho}\otimes(\sigma_{t}^{\varphi})^{op})  &
=\omega,\nonumber
\end{align}
for all $x\in N$, $y\in M$, $g\in G $ and $t\in\mathbb{R}$. We denote by
$J_{s}(\mathfrak{N},\mathfrak{M})$ the set of all joinings \footnote{Here the subscript `s' means `state'.} of
$\mathfrak{N}$ and $\mathfrak{M}$.
\end{definition}

Note that $M=M^{op}$ when $M$ is abelian and the modular theory of $M$ is
trivial when $M$ is finite, thus the modification Eq. \eqref{Eq: Diagonal Action Preservation} required for properly
infinite algebras is masked in the classical case. Despite its apparent subtlety the above definition will allow us to reinterpret joinings as time covariant coherent symmetric quantum channels in \S4, revealing a vivid analogy between ergodic theory and quantum information theory.

Given $\omega\in J_{s}(\mathfrak{N},\mathfrak{M})$ we form the GNS Hilbert
space $\mathcal{H}_{\omega}$ that is the separation and completion of $N\odot
M^{op}$ with respect to the inner product extending
$\langle x_{0}\otimes y_{0}^{op},x_{1}\otimes y_{1}^{op}\rangle:=\omega
(x_{1}^{\ast}x_{0}\otimes(y_{0}y_{1}^{\ast})^{op})$ for all $x_{0},x_{1}\in N$
and $y_{0},y_{1}\in M$.

We will repeatedly use the following \textquotedblleft exchange
maps\textquotedblright, which switch the distinguished cyclic vectors in
pointed Hilbert modules. 

\begin{definition}
\label{ExchangeMap}Let $N$ be a von Neumann algebra and let $(\mathcal{H},\xi)$ and $(\mathcal{K},\eta)$ be cyclic normal left (resp. right) $N$--modules, with
$\xi$ and $\eta$ separating vectors. The \textbf{left (resp. right) exchange
map} is the densely defined linear map $L_{\xi,\eta}:\mathcal{H}%
\mapsto\mathcal{K}$ (resp. $R_{\xi,\eta}$) with domain $N\xi$ (resp. $\xi N$)
defined by $L_{\xi,\eta}(x\xi)=x\eta$ (resp. $R_{\xi,\eta}(\xi x)=\eta x$) for
all $x\in N$.
\end{definition}

We now prove that if the GNS construction is applied to a joining $\omega$,
then the pair $(\mathcal{H}_{\omega},\xi_{\omega})$ formed from the resulting
Hilbert space $\mathcal{H}_{\omega}$ and unit cyclic vector $\xi_{\omega}$ is
a pointed $N$--$M$ correspondence\footnote{Note that in our notation for
canonical GNS\ cyclic vectors, we reserve the use of $\Omega_{\rho}$ to
signify the cyclic vector of a faithful normal state $\rho$ on a von Neumann
algebra, whereas $\xi_{\omega}$ will be used to denote the cyclic vector
obtained from applying the GNS construction to a joining $\omega$, in order to
remind the reader that a joining is not a state on a von Neumann algebra.}
with extra structure coming from Eq. \eqref{Eq: Pushforwards} and
\eqref{Eq: Diagonal Action Preservation}.

\begin{theorem}
\label{BimoduleFromJoining}The GNS Hilbert space $\mathcal{H}_{\omega}$
obtained from $\omega\in J_{s}(\mathfrak{N},\mathfrak{M})$ is a pointed $N$--$M$ correspondence with cyclic vector $\xi_{\omega}$ given by
the class of $1_{N}\otimes1_{M^{op}}$ in $\mathcal{H}_{\omega}$, and bimodule structure obtained from the GNS representation $\pi_{\omega}$ as $x\eta y:=\pi_{\omega}(x\otimes y^{op})\eta$ for all $x\in N$, $y\in M$ and $\eta\in\mathcal{H}_{\omega}$. Let
$\overline{N\xi_{\omega}}$ (resp. $\overline{\xi_{\omega}M}$) denote the
closure of $N\xi_{\omega}$ (resp. $\overline{\xi_{\omega}M}$) in
$\mathcal{H}_{\omega}$. Then the exchange map $L_{\xi_{\omega},\Omega_{\rho}
}:\overline{N\xi_{\omega}}\rightarrow L^{2}(N,\rho)$ (resp. $R_{\xi_{\omega
},\Omega_{\varphi}}:\overline{\xi_{\omega}M}\rightarrow L^{2}(M,\varphi)$)
extends to a normal left (resp. right) Hilbert module isomorphism of
$\overline{N\xi_{\omega}}$ (resp. $\overline{\xi_{\omega}M}$) onto
$L^{2}(N,\rho)$ (resp. $L^{2}(M,\varphi)$). Furthermore, for each $g\in G $
and $t\in\mathbb{R}$
\begin{equation}
x\xi_{\omega}y\overset{\Pi_{g}}{\mapsto}\alpha
_{g}(x)\xi_{\omega} \beta_{g}(y)
\label{Eq: UnitaryRepGammaonHomega}
\end{equation}
and
\begin{equation*}
x\xi_{\omega}y\overset{T_{t}}{\mapsto}\sigma_{t}
^{\rho}(x)\xi_{\omega}\sigma_{t}^{\varphi}(y)
\end{equation*}
extend to unitary operators on $\mathcal{H}_{\omega}$. The maps $g \mapsto \Pi_{g}$ and $t\mapsto T_{t}$ are strongly continuous unitary representations of $ G $ and $\mathbb{R}$ on $\mathcal{H}_{\omega}$.
\end{theorem}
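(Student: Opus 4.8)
The plan is to read off all the structure from the GNS data $(\mathcal{H}_\omega,\pi_\omega,\xi_\omega)$, defining everything first on the dense domain $\pi_\omega(N\odot M^{op})\xi_\omega$ spanned by the vectors $x\xi_\omega y=\pi_\omega(x\otimes y^{op})\xi_\omega$ and extending by boundedness, using the marginal conditions \eqref{Eq: Pushforwards} for the module structure and the covariance conditions \eqref{Eq: Diagonal Action Preservation} for the unitaries. First I would establish binormality. The actions $\pi_N(x)=\pi_\omega(x\otimes 1_M^{op})$ and $\pi_{M^{op}}(y^{op})=\pi_\omega(1_N\otimes y^{op})$ automatically commute and are $*$-representations, so the only issue is normality. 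Fix $c\ge 0$ in $M$; for $x\ge 0$ one has $0\le x\otimes c^{op}\le \|c\|(x\otimes 1_M^{op})$, so $x\mapsto\omega(x\otimes c^{op})$ is a positive functional dominated by $\|c\|\rho$ and hence normal, since a positive functional dominated by a normal one is normal. A general matrix coefficient on the total set of vectors reads $x\mapsto\langle\pi_N(x)\,\pi_\omega(a\otimes b^{op})\xi_\omega,\pi_\omega(a'\otimes (b')^{op})\xi_\omega\rangle=\omega\big((a')^{*}xa\otimes (b(b')^{*})^{op}\big)$; polarizing the $M$-entry expresses this as a combination of the normal functionals $x\mapsto\omega((a')^{*}xa\otimes(cc^{*})^{op})$ precomposed with the $\sigma$-weakly continuous map $x\mapsto (a')^{*}xa$, hence it is normal in $x$. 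As these coefficients are $\sigma$-weakly continuous on a total set and the $\pi_N(x)$ are uniformly bounded on bounded sets, $\pi_N$ is normal; the argument for $\pi_{M^{op}}$ is identical with $\varphi$ in place of $\rho$. This yields the pointed $N$--$M$ correspondence $(\mathcal{H}_\omega,\xi_\omega)$.

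For the exchange maps I would use the inner product formula directly: $\langle x_0\xi_\omega,x_1\xi_\omega\rangle=\omega(x_1^{*}x_0\otimes 1_M^{op})=\rho(x_1^{*}x_0)=\langle x_0\Omega_\rho,x_1\Omega_\rho\rangle_\rho$, so $L_{\xi_\omega,\Omega_\rho}\colon x\xi_\omega\mapsto x\Omega_\rho$ is isometric with dense domain $N\xi_\omega$ and dense range $N\Omega_\rho$ (faithfulness of $\rho$ makes $\xi_\omega$ left-separating), extending to a unitary $\overline{N\xi_\omega}\to L^{2}(N,\rho)$ that intertwines the left $N$-actions by construction, i.e. a normal left Hilbert-module isomorphism. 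Symmetrically $\langle\xi_\omega y_0,\xi_\omega y_1\rangle=\varphi(y_0y_1^{*})=\langle\Omega_\varphi y_0,\Omega_\varphi y_1\rangle_\varphi$ gives the right isomorphism $R_{\xi_\omega,\Omega_\varphi}$.

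Unitarity of $\Pi_g$ and $T_t$ is where \eqref{Eq: Diagonal Action Preservation} enters. Using $\alpha_g(x_1)^{*}\alpha_g(x_0)=\alpha_g(x_1^{*}x_0)$ and $\beta_g(y_0)\beta_g(y_1)^{*}=\beta_g(y_0y_1^{*})$, a termwise computation gives $\langle\alpha_g(x_0)\xi_\omega\beta_g(y_0),\alpha_g(x_1)\xi_\omega\beta_g(y_1)\rangle=\omega\big((\alpha_g\otimes\beta_g^{op})(x_1^{*}x_0\otimes(y_0y_1^{*})^{op})\big)=\omega(x_1^{*}x_0\otimes(y_0y_1^{*})^{op})=\langle x_0\xi_\omega y_0,x_1\xi_\omega y_1\rangle$, the middle equality being exactly the first line of \eqref{Eq: Diagonal Action Preservation}. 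Hence $\Pi_g$ is well defined and isometric on the dense domain, and since $\Pi_{g^{-1}}$ inverts it there, it has dense range and extends to a unitary; the modular line of \eqref{Eq: Diagonal Action Preservation} gives unitarity of $T_t$ verbatim. The homomorphism properties $\Pi_g\Pi_h=\Pi_{gh}$ and $T_sT_t=T_{s+t}$ hold on the dense domain because $\alpha,\beta,\sigma^\rho,\sigma^\varphi$ are homomorphisms (resp. one-parameter groups) and then extend by boundedness. For strong continuity it suffices to test on $\eta=x\xi_\omega y$; the estimate $\|\Pi_g\eta-\Pi_{g_0}\eta\|\le\|y\|\,\|(\alpha_g(x)-\alpha_{g_0}(x))\xi_\omega\|+\|x\|\,\|\xi_\omega(\beta_g(y)-\beta_{g_0}(y))\|$, after transport through the exchange isomorphisms, becomes $\|y\|\,\|(U_g-U_{g_0})x\Omega_\rho\|_\rho+\|x\|\,\|(V_g-V_{g_0})\Omega_\varphi y\|_\varphi$, which tends to $0$ by the strong continuity of $\pi_U$ and $\pi_V$; the same estimate with $\Delta_\rho^{it},\Delta_\varphi^{it}$ handles $T_t$.

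I expect the binormality step to be the main obstacle. In the non-tracial, arbitrary-type setting one cannot invoke trace-based correspondence machinery, so the crux is the domination estimate $\omega(\,\cdot\otimes c^{op})\le\|c\|\rho$, together with the permanence of normality under domination and the passage from matrix coefficients to normality of the whole representation; this is precisely where working with the opposite algebra and the symmetry of the standard form earns its keep. The remaining steps are routine once one checks that every identification respects the left and right module actions.
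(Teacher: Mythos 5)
Your proposal is correct, and its overall skeleton coincides with the paper's: marginal conditions give the exchange-map isometries, the covariance conditions give unitarity of $\Pi_{g}$ and $T_{t}$ on the dense domain, and strong continuity is a triangle-inequality estimate transported through the exchange maps (the paper omits these last details; your estimate $\|y\|\,\|(U_{g}-U_{g_{0}})x\Omega_{\rho}\|_{\rho}+\|x\|\,\|(V_{g}-V_{g_{0}})\Omega_{\varphi}y\|_{\varphi}$ is exactly the right filling, using that the canonical implementations commute with $J_{\varphi}$). Where you genuinely diverge is the binormality step, which both you and the authors identify as the technical heart. The paper takes a bounded net $x_{\lambda}\to 0$ in the strong operator topology, uses Cauchy--Schwarz together with the already-established exchange isometry $\|x_{\lambda}y\,\xi_{\omega}\|_{\omega}=\|x_{\lambda}y\,\Omega_{\rho}\|_{\rho}$ to conclude $\pi_{N}(x_{\lambda})\to 0$ strongly, and then invokes Lemma 10.1.10 of Kadison--Ringrose to get normality. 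You instead work at the level of functionals: the domination $0\le\omega(\,\cdot\otimes c^{op})\le\|c\|\rho$ for $c\ge 0$, the standard fact that a positive functional dominated by a normal one is normal, polarization in the $M$-entry to handle general matrix coefficients $x\mapsto\omega((a')^{*}xa\otimes(b(b')^{*})^{op})$, and finally uniform boundedness plus density to upgrade normality of coefficients to normality of $\pi_{N}$. Both arguments are valid; the paper's is shorter because it recycles the exchange isometry and outsources the conclusion to a cited lemma, while yours is self-contained modulo the domination fact and never needs the exchange maps for this step, which makes the role of the marginal conditions (and of the opposite algebra) in forcing normality more transparent.
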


\begin{proof}
By Eq. \eqref{Eq: Pushforwards} $\xi_{\omega}$ is separating for the left (resp. right) action of $N$ (resp. $M$) on $N\xi_{\omega}$  (resp. $\xi_{\omega}M$). Consider the left
exchange map $L_{\xi_{\omega},\Omega_{\rho}}$ from $\overline{N\xi_{\omega}}$ into
$L^{2}(N,\rho)$. This is clearly a left $N$--module map. We have for all
$x,y\in N$ that
\begin{equation}
\langle x\xi_{\omega},y\xi_{\omega}\rangle_{\omega}=\omega(y^{\ast}
x\otimes1_{M^{op}})=\rho(y^{\ast}x)=\langle x\Omega_{\rho},y\Omega_{\rho
}\rangle_{\rho} \label{Eq: ModuleIsometry}
\end{equation}
and hence $L_{\xi_{\omega},\Omega_{\rho}}$ extends to a unitary.

The right exchange map $R_{\xi_{\omega},\Omega_{\varphi}}$ is clearly a right $M$--module
map. We have for all $x,y\in M$
\begin{align*}
\langle\xi_{\omega}x,\xi_{\omega}y\rangle_{\mathcal{\omega}}  &  =\omega
(1_{N}\otimes(xy^{\ast})^{op})=\varphi(xy^{\ast})\\
&  =\varphi((x^{\ast})^{\ast}y^{\ast})=\langle y^{\ast}\Omega_{\varphi
},x^{\ast}\Omega_{\varphi}\rangle_{\varphi}=\langle J_{\varphi}x^{\ast}
\Omega_{\varphi},J_{\varphi}y^{\ast}\Omega_{\varphi}\rangle_{\varphi}\\
&  =\langle J_{\varphi}x^{\ast}J_{\varphi}\Omega_{\varphi},J_{\varphi}y^{\ast
}J_{\varphi}\Omega_{\varphi}\rangle_{\varphi}=\langle\Omega_{\varphi}
x,\Omega_{\varphi}y\rangle_{\varphi}\text{,}
\end{align*}
which shows that $R_{\xi_{\omega},\Omega_{\varphi}}$ extends to a unitary also.
Let $(x_{\lambda})$ be a bounded net in $N$ converging to $0$ in the strong
operator topology on $\mathbf{B}(L^{2}(N,\rho))$. By Eq. \eqref{Eq: ModuleIsometry},
\begin{align*}
\left\vert \langle x_{\lambda}y_{1}\xi_{\omega}z_{1},x_{\lambda}y_{2}
\xi_{\omega}z_{2})\rangle_{\omega}\right\vert  &  \leq\left\Vert \pi_{\omega
}(1_{N}\otimes(z_{1}z_{2}^{\ast})^{op})\right\Vert \left\Vert x_{\lambda}
y_{1}\xi_{\omega}\right\Vert _{\omega}\left\Vert x_{\lambda}y
\xi_{\omega}\right\Vert _{\omega}\\
&  =\left\Vert \pi_{\omega}(1_{N}\otimes(z_{1}z_{2}^{\ast})^{op})\right\Vert
\left\Vert x_{\lambda}y_{1}\Omega_{\rho}\right\Vert _{\rho}\left\Vert
x_{\lambda}y_{2}\Omega_{\rho}\right\Vert _{\rho}
\end{align*}
for all $y_{1},y_{2}\in N$ and $z_{1},z_{2}\in M$. By standard approximation
arguments, it follows that $\pi_{\omega}(x_{\lambda}\otimes1_{M}^{op})$
converges to $0$ in the strong operator topology on $\mathbf{B}(\mathcal{H}%
_{\omega})$. By Lemma 10.1.10 of \cite{KRII} it follows that the left action
of $N$ on $\mathcal{H}_{\omega}$ is normal. Normality of the right $M$ action
follows analogously.

Finally, the condition $\omega\circ(\alpha_{g}\otimes\beta_{g}^{op})=\omega$
for all $g\in G $ implies
\begin{align*}
&\langle \Pi_{g}(x_{1}\xi_{\omega}y_{1}), \Pi_{g}(x_{2}\xi_{\omega}y_{2})\rangle_{\omega}\\   &=\langle\pi_{\omega}(\alpha
_{g}(x_{1})\otimes \beta_{g}^{op}(y_{1}^{op}))\xi_{\omega},\pi_{\omega}(\alpha
_{g}(x_{2})\otimes \beta_{g}^{op}(y_{2}^{op}))\xi_{\omega}\rangle_{\omega}\\
  &=\omega(\alpha_{g}(x_{2}^{\ast}x_{1})\otimes\beta_{g}^{op}((y_{1}y_{2}^{\ast})^{op}))
=\omega(x_{2}^{\ast}x_{1}\otimes(y_{1}y_{2}^{\ast})^{op})\\
  &=\langle x_{1}\xi_{\omega}y_{1},x_{2}\xi_{\omega}y_{2}\rangle_{\omega}
\end{align*}
for all $x_{1},x_{2} \in N$ and $y_{1},y_{2} \in M$, so for every $g\in G $ the map $\Pi_{g}$ extends to a unitary on
$\mathcal{H}_{\omega}$. Similarly, $T_{t}$ for $t\in\mathbb{R}$ extends to a
unitary. It is easy to show that the maps $g \mapsto \Pi_{g}$ and $t\mapsto T_{t}$ are unitary representations, and strong continuity of these representations
follows by routine application of the triangle inequality. We omit the details.
\end{proof}

By Theorem \ref{BimoduleFromJoining} the correspondence $(\mathcal{H}_{\omega
},\xi_{\omega})$ carries extra structure. Theorem \ref{JoiningFromBimodule}
below establishes that any pointed correspondence with this extra structure
naturally gives rise to a joining. Together, these two theorems reveal that
the study of joinings is in fact a study of the natural class of pointed
correspondences we now specify.

\begin{definition}
\label{DefinitionofJoiningsasCorrespondences}Let $J_{h}(\mathfrak{N}
,\mathfrak{M})$ denote the set\footnote{ Here `h' stands for `Hilbert space'.} of pointed correspondences $(\mathcal{H},\xi)$
satisfying the following conditions:

\begin{enumerate}
\item \label{ExchangeMapIsomorphismCondition} The exchange maps $L_{\xi
,\Omega_{\rho}}$ and $R_{\xi,\Omega_{\varphi}}$ are well-defined and extend to Hilbert module isomorphisms $L_{\xi,\Omega_{\rho}}:\overline{N\xi}\rightarrow L^{2}(N,\rho)$ and \\ $R_{\xi,\Omega_{\varphi}}:\overline{\xi M}\rightarrow L^{2}(M,\varphi)$.

\item \label{UnitaryRepGammaandRealonH}For $g\in G $ and $t\in\mathbb{R}$
the maps $x\xi y\overset{\Pi_{g}}{\mapsto}\alpha_{g}(x)\xi\beta_{g}(y)$ and
$x\xi y\overset{T_{t}}{\mapsto}\sigma_{t}^{\rho}(x)\xi\sigma_{t}^{\varphi}(y)$
extend to unitary maps on $\mathcal{H}$, and $g\mapsto\Pi_{g}$ and $t\mapsto
T_{t}$ are strongly continuous unitary representations of $ G $ and
$\mathbb{R}$ on $\mathcal{H}$, respectively.
\end{enumerate}
\end{definition}

The fact that the maps in condition (1) above are unitary is somewhat delicate. For example, if the standard cyclic vector $\Omega_{\rho}$ of $L^{2}(N,\rho)$ is not a trace vector, then we cannot immediately deduce that the bimodule $L^{2}(N,\rho)$ is binormal if and only if $\Omega_{\rho}$ is a bounded vector as in \cite{Po}. It is therefore not immediate that one can identify cyclic correspondences with normal completely positive maps in the setting of infinite von Neumann algebras. Condition (1) above bypasses this problem, as we will see in the next section.

\begin{theorem}
\label{JoiningFromBimodule}Let $(\mathcal{H},\xi)\in J_{h}(\mathfrak{N}
,\mathfrak{M})$. Then the linear functional $\omega_{(\mathcal{H},\xi)}$ on
$N\odot M^{op}$ linearly extending $\omega_{(\mathcal{H},\xi)}(x\otimes
y^{op})=\langle x\xi y,\xi\rangle_{\mathcal{H}}$ is a joining of
$\mathfrak{N}$ and $\mathfrak{M}$, i.e. $\omega_{(\mathcal{H},\xi)}\in
J_{s}(\mathfrak{N},\mathfrak{M})$.
\end{theorem}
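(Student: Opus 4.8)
The plan is to verify directly that $\omega := \omega_{(\mathcal{H},\xi)}$ meets the three requirements of Definition \ref{DefinitionJoiningasState}: that it is a state on $N\odot M^{op}$, that its marginals are $\rho$ and $\varphi$ as in \eqref{Eq: Pushforwards}, and that it is invariant under the doubled group and modular actions as in \eqref{Eq: Diagonal Action Preservation}. Every step is the mirror image of a computation in the proof of Theorem \ref{BimoduleFromJoining}, now \emph{read off} from the abstract correspondence data rather than manufactured from a given state. First I would handle the state property. Writing $\pi_N$ and $\pi_{M^{op}}$ for the commuting normal left-$N$ and right-$M$ representations underlying the correspondence $(\mathcal{H},\xi)$, the assignment $x\otimes y^{op}\mapsto \pi_N(x)\pi_{M^{op}}(y^{op})$ (that is, $\eta\mapsto x\eta y$) extends by the universal property of the algebraic tensor product to a $\ast$-representation $\pi$ of $N\odot M^{op}$ on $\mathcal{H}$; multiplicativity follows from $y^{op}(y')^{op}=(y'y)^{op}$ together with commutation of the two actions, and $\ast$-preservation from the fact that each action is a $\ast$-representation. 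Then $\omega=\langle\pi(\cdot)\xi,\xi\rangle_{\mathcal{H}}$ is the vector state of the unit vector $\xi$, so the stated linear extension is well defined, $\omega(z^\ast z)=\|\pi(z)\xi\|_{\mathcal{H}}^2\geq 0$, and $\omega(1_N\otimes 1_{M^{op}})=\|\xi\|^2=1$.

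Next I would read off the marginals using condition (1) of Definition \ref{DefinitionofJoiningsasCorrespondences}. Since $L_{\xi,\Omega_\rho}:\overline{N\xi}\to L^2(N,\rho)$ is a Hilbert module isomorphism carrying $x\xi\mapsto x\Omega_\rho$ and $\xi\mapsto\Omega_\rho$, its unitarity gives
\[
\omega(x\otimes 1_M^{op})=\langle x\xi,\xi\rangle_{\mathcal{H}}=\langle x\Omega_\rho,\Omega_\rho\rangle_\rho=\rho(x).
\]
For the right marginal, $R_{\xi,\Omega_\varphi}:\overline{\xi M}\to L^2(M,\varphi)$ carries $\xi y\mapsto\Omega_\varphi y$ and $\xi\mapsto\Omega_\varphi$, so $\omega(1_N\otimes y^{op})=\langle\xi y,\xi\rangle_{\mathcal{H}}=\langle\Omega_\varphi y,\Omega_\varphi\rangle_\varphi$; expanding $\Omega_\varphi y=J_\varphi y^\ast\Omega_\varphi$ and applying the conjugate-linear isometry identity $\langle J_\varphi\zeta,J_\varphi\eta\rangle=\langle\eta,\zeta\rangle$ exactly as in Theorem \ref{BimoduleFromJoining} (equivalently, specializing the relation $\langle\Omega_\varphi x,\Omega_\varphi y\rangle_\varphi=\varphi(xy^\ast)$ established there) yields $\langle\Omega_\varphi y,\Omega_\varphi\rangle_\varphi=\varphi(y)$.

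Finally I would verify invariance using condition (2). The unitaries $\Pi_g$ and $T_t$ each fix $\xi$, since $\Pi_g\xi=\alpha_g(1_N)\xi\beta_g(1_M)=\xi$ and likewise $T_t\xi=\xi$. Using the identities $\beta_g^{op}(y^{op})=(\beta_g(y))^{op}$ and $(\sigma_t^\varphi)^{op}(y^{op})=(\sigma_t^\varphi(y))^{op}$ from \S2, together with unitarity of $\Pi_g$,
\[
\omega\big((\alpha_g\otimes\beta_g^{op})(x\otimes y^{op})\big)=\langle\alpha_g(x)\xi\beta_g(y),\xi\rangle_{\mathcal{H}}=\langle\Pi_g(x\xi y),\Pi_g\xi\rangle_{\mathcal{H}}=\langle x\xi y,\xi\rangle_{\mathcal{H}}=\omega(x\otimes y^{op}),
\]
and the identical argument with $T_t$ in place of $\Pi_g$ gives $\omega\circ(\sigma_t^\rho\otimes(\sigma_t^\varphi)^{op})=\omega$. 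Hence $\omega\in J_s(\mathfrak{N},\mathfrak{M})$.

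I do not expect a single hard estimate; the real work is the bookkeeping around the opposite algebra and the modular conjugation. The two points requiring care are (i) confirming that the commuting left and right actions genuinely assemble into a $\ast$-representation of $N\odot M^{op}$, so that $\omega$ is a bona fide state rather than merely a positive-on-elementary-tensors functional, and (ii) tracking the conjugate-linearity of $J_\varphi$ in the right marginal so that the outcome is $\varphi(y)$ and not $\overline{\varphi(y)}$. Both are resolved precisely as in the forward direction of Theorem \ref{BimoduleFromJoining}, which is why this converse is essentially a transcription of those computations with the roles of the hypotheses and conclusions interchanged.
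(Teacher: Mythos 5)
Your proof is correct and follows essentially the same route as the paper's: positivity via the vector state composed with the $\ast$-representation of $N\odot M^{op}$, the marginal conditions read off from the unitary exchange maps $L_{\xi,\Omega_{\rho}}$ and $R_{\xi,\Omega_{\varphi}}$, and invariance from the unitaries $\Pi_{g}$ and $T_{t}$ fixing $\xi$. The only differences are cosmetic: you spell out the $J_{\varphi}$ bookkeeping for the right marginal and phrase the invariance step as $\langle \Pi_{g}(x\xi y),\Pi_{g}\xi\rangle_{\mathcal{H}}=\langle x\xi y,\xi\rangle_{\mathcal{H}}$, which is if anything a cleaner rendering of the computation the paper displays.
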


\begin{proof}
To reduce notation, let $\omega=\omega_{(\mathcal{H},\xi)}$. This linear functional is positive, being a vector state composed with a $*-$representation. By hypothesis,
\begin{align*}
\omega(x^{\ast}x\otimes1_{M}^{op})  &  =\norm{x\xi}_{\mathcal{H}}^{2}
=\norm{L_{\xi,\Omega_{\rho}}(x\xi)}_{\rho}^{2}\\
&  =\norm{xL_{\xi,\Omega_{\rho}}(\xi)}_{\rho}^{2}=\norm{x\Omega_{\rho}}_{\rho}
^{2}=\rho(x^{\ast}x)
\end{align*}
for all $x\in N$. It follows that $\omega(x\otimes1_{M}^{op})=\rho(x)$ for
every $x\in N$. Similarly, $\omega(1_{N}\otimes y^{op})=\varphi(y)$ for all
$y\in M$. Finally, by (\ref{UnitaryRepGammaandRealonH}) of Defn.
\ref{DefinitionofJoiningsasCorrespondences} it follows that for every $x\in N$
and $y\in M$
\begin{align*}
\omega(\alpha_{g}(x)\otimes\beta_{g}^{op}(y^{op}))  &  =\langle\alpha
_{g}(x)\xi\beta_{g}(1_{M}),\alpha_{g}(1_{N})\xi\beta_{g}(y)\rangle
_{\mathcal{H}}\\
&  =\langle x\xi,\xi y\rangle_{\mathcal{H}}=\omega(x\otimes y^{op})
\end{align*}
and again $\omega\circ(\alpha_{g}\otimes\beta_{g}^{op})=\omega$ for all
$g\in G $. The argument for $t\mapsto T_{t}$ is similar. Hence $\omega\in
J_{s}(\mathfrak{N},\mathfrak{M})$.
\end{proof}

\begin{remark}\label{GNSisomorphism}
Note that the map $\omega\mapsto(\mathcal{H}_{\omega},\xi_{\omega})$ from
$J_{s}(\mathfrak{N,M})$ into $J_{h}(\mathfrak{N,M})$ in Theorem
\ref{BimoduleFromJoining} is essentially the inverse of the map $(\mathcal{H}
,\xi)\mapsto\omega_{(\mathcal{H},\xi)}$ from $J_{h}(\mathfrak{N,M})$ into
$J_{s}(\mathfrak{N,M})$ in Theorem \ref{JoiningFromBimodule}. This is an
immediate consequence of the GNS construction.
\end{remark}

\section{Equivariant Markov Maps}

As before, let $(N,\rho)$ and $(M,\varphi)$ be von Neumann algebras with
faithful normal states. A unital completely positive (u.c.p.) map
$\Phi:N\rightarrow M$ that satisfies $\varphi\circ\Phi=\rho$ and $\sigma
_{t}^{\varphi}\circ\Phi=\Phi\circ\sigma_{t}^{\rho}$ for $t\in\mathbb{R}$ is
called a $(\rho,\varphi)$--\textbf{Markov map}. Given a $(\rho,\varphi)$--Markov
map $\Phi$, we perform the Stinespring construction to obtain a pointed
$N$--$M$ correspondence $(\mathcal{H}_{\Phi},\xi_{\Phi})$ via separation and
completion of $N\odot M$ with respect to the sesquilinear form 
\begin{align}\label{StinespringInnerProduct}
\langle
x_{1}\otimes y_{1},x_{2}\otimes y_{2}\rangle_{\Phi}:=\langle \Phi(x_{2}^{\ast}x_{1})\Omega_{\varphi}y_{1},\Omega_{\varphi}y_{2}\rangle_{\varphi},
\end{align}
and bimodule structure defined by
$x(x_{0}\otimes y_{0})y:=xx_{0}\otimes y_{0}y$ for $x,x_{0},x_{1},x_{2}\in N$
and $y,y_{0},y_{1},y_{2}\in M$. The distinguished unit cyclic vector
$\xi_{\Phi}$ in $\mathcal{H}_{\Phi}$ is the class of $1_{N}\otimes1_{M}$.
Let us remark that when $y_{1}, y_{2}$ are in the centralizer $M^{\varphi}$ then Eq.  \eqref{StinespringInnerProduct} may be written
\begin{align*}
\langle
x_{1}\otimes y_{1},x_{2}\otimes y_{2}\rangle_{\Phi}=\varphi(y_{2}^{*}\Phi(x_{2}^{*}x_{1})y_{1}),
\end{align*}
but this expression is not valid in general. A $(\rho,\varphi)$--Markov map $\Phi$ that satisfies $\beta_{g}\circ\Phi=\Phi
\circ\alpha_{g}$ for all $g\in G$ will be called a $G$--\textbf{equivariant Markov map}, or simply an equivariant Markov map, in what follows.
In the next theorem we show that every joining defined in \S3 has a natural associated equivariant Markov map.

\begin{theorem}
\label{Popa's Lemma copy(1)}For any $\omega\in J_{s}(\mathfrak{N}
,\mathfrak{M})$ there exists a normal $(\rho,\varphi)$--Markov map $\Phi_{\omega
}:N\rightarrow M$ such that $\beta_{g}\circ\Phi_{\omega}=\Phi_{\omega}
\circ\alpha_{g}$ for $g\in G $. The pointed $N$--$M$ bimodules
$(\mathcal{H}_{\omega},\xi_{\omega})$ and $(\mathcal{H}_{\Phi_{\omega}}
,\xi_{\Phi_{\omega}})$ are isomorphic.
\end{theorem}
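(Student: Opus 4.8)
The plan is to manufacture $\Phi_\omega$ by compressing the left $N$-action on $\mathcal H_\omega$ to the copy of $L^2(M,\varphi)$ that sits inside $\mathcal H_\omega$ as $\overline{\xi_\omega M}$. Let $V\colon L^2(M,\varphi)\to\mathcal H_\omega$ be the isometry obtained as the inverse of the right exchange isomorphism $R_{\xi_\omega,\Omega_\varphi}$ of Theorem \ref{BimoduleFromJoining}, so that $V$ has range $\overline{\xi_\omega M}$, satisfies $V\Omega_\varphi=\xi_\omega$, and intertwines the right $M$-actions, i.e. $V(\eta y)=(V\eta)y$. Writing $\lambda(x)=\pi_\omega(x\otimes 1_M^{op})$ for the left action of $x\in N$, I would set $\Phi_\omega(x):=V^{\ast}\lambda(x)V$. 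The first and decisive step is to check that $\Phi_\omega(x)$ lands in $M$ rather than merely in $\mathbf B(L^2(M,\varphi))$: since $V$ intertwines the right $M$-actions and $\lambda(x)$ commutes with the right $M$-action on $\mathcal H_\omega$ (the two actions of a correspondence commute), $\Phi_\omega(x)$ commutes with the right action of $M$ on $L^2(M,\varphi)$, that is, with $J_\varphi MJ_\varphi=M'$, and hence $\Phi_\omega(x)\in(M')'=M$.

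Granting this, $\Phi_\omega\colon N\to M$ is unital (as $V^{\ast}V=1$), completely positive (a compression of the $\ast$-homomorphism $\lambda$), and normal (because $\lambda$ is normal by Theorem \ref{BimoduleFromJoining} and compression by a bounded operator is normal). The marginal identity $\varphi\circ\Phi_\omega=\rho$ follows from $V\Omega_\varphi=\xi_\omega$ together with Eq. \eqref{Eq: Pushforwards}, via $\varphi(\Phi_\omega(x))=\langle\lambda(x)\xi_\omega,\xi_\omega\rangle_\omega=\omega(x\otimes 1_M^{op})=\rho(x)$.

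For modular covariance and equivariance the key observation is that the representations $t\mapsto T_t$ and $g\mapsto\Pi_g$ of Theorem \ref{BimoduleFromJoining} fix $\xi_\omega$ and, restricted to $\overline{\xi_\omega M}$, are carried by $R_{\xi_\omega,\Omega_\varphi}$ onto $\Delta_\varphi^{it}$ and onto the canonical unitary $V_g$ implementing $\beta_g$ on $L^2(M,\varphi)$; this uses the standard-form identities $\Delta_\varphi^{it}(\Omega_\varphi y)=\Omega_\varphi\sigma_t^\varphi(y)$ and $V_g(\Omega_\varphi y)=\Omega_\varphi\beta_g(y)$, which in turn follow from $\Delta_\varphi^{it}J_\varphi=J_\varphi\Delta_\varphi^{it}$ and $V_gJ_\varphi=J_\varphi V_g$. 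Thus $T_tV=V\Delta_\varphi^{it}$ and $\Pi_gV=VV_g$, while a direct computation on vectors $x'\xi_\omega y'$ gives $T_t\lambda(x)T_t^{\ast}=\lambda(\sigma_t^\rho(x))$ and $\Pi_g\lambda(x)\Pi_g^{\ast}=\lambda(\alpha_g(x))$. Combining these I obtain
\[
\Phi_\omega(\sigma_t^\rho(x))=V^{\ast}T_t\lambda(x)T_t^{\ast}V=\Delta_\varphi^{it}\Phi_\omega(x)\Delta_\varphi^{-it}=\sigma_t^\varphi(\Phi_\omega(x)),
\]
and likewise $\Phi_\omega(\alpha_g(x))=V_g\Phi_\omega(x)V_g^{\ast}=\beta_g(\Phi_\omega(x))$, so $\Phi_\omega$ is a $G$-equivariant $(\rho,\varphi)$-Markov map.

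Finally, for the correspondence isomorphism I would compare moments. Using $V\Omega_\varphi=\xi_\omega$, right $M$-linearity of $V^{\ast}$, and the standard-form manipulation $\langle a\Omega_\varphi y_1,\Omega_\varphi y_2\rangle_\varphi=\langle a\,\Omega_\varphi(y_1y_2^{\ast}),\Omega_\varphi\rangle_\varphi$ for $a\in M$, I get
\[
\langle\Phi_\omega(x_2^{\ast}x_1)\Omega_\varphi y_1,\Omega_\varphi y_2\rangle_\varphi=\omega\big(x_2^{\ast}x_1\otimes(y_1y_2^{\ast})^{op}\big)=\langle x_1\xi_\omega y_1,x_2\xi_\omega y_2\rangle_\omega.
\]
Comparing with the Stinespring form \eqref{StinespringInnerProduct}, the assignment $x\xi_{\Phi_\omega}y\mapsto x\xi_\omega y$ is isometric on the dense span, sends $\xi_{\Phi_\omega}$ to $\xi_\omega$, and intertwines both actions; as both vectors are cyclic it extends to a unitary isomorphism of pointed correspondences. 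The main obstacle is the very first step: verifying $\Phi_\omega(x)\in M$. This is precisely the non-tracial subtlety flagged after Definition \ref{DefinitionofJoiningsasCorrespondences} — it is condition (1), the right exchange map being a genuine right-module \emph{isomorphism} onto $L^2(M,\varphi)$, that produces the intertwining isometry $V$ and thereby forces the compression into the commutant of $M'$. The identification of $T_t|_{\overline{\xi_\omega M}}$ with $\Delta_\varphi^{it}$ is the second delicate point; everything else is routine.
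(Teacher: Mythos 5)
Your proposal is correct and follows essentially the same route as the paper: both define $\Phi_{\omega}$ as the compression of the left $N$--action by the right exchange unitary, verify that it lands in $M$ by commutation with the right $M$--action (the paper via the matrix--element computation in Eq. \eqref{Eq:OmegaintoM}, you via the double commutant theorem), and identify $(\mathcal{H}_{\omega},\xi_{\omega})$ with the Stinespring bimodule by matching inner products against Eq. \eqref{StinespringInnerProduct}. The only cosmetic difference is that you derive equivariance and modular covariance from the relations $\Pi_{g}V=VV_{g}$ and $T_{t}V=V\Delta_{\varphi}^{it}$ for the unitaries of Theorem \ref{BimoduleFromJoining}, whereas the paper computes directly with the invariance property of $\omega$ as in Eq. \eqref{Eq:BreakthroughStuff}; both arguments rest on the same facts.
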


\begin{proof}
Let $\pi_{N}$ and $\pi_{M^{op}}$ denote the representations of $N$ and $M^{op}$ on $\mathcal{H}_{\omega}$ and let $R=R_{\Omega_{\varphi},\xi_{\omega
}}:L^{2}(M)\rightarrow \overline{\xi_{\omega}M}$ denote the exchange map. By
Theorem \ref{BimoduleFromJoining} $R$ is unitary. For $x\in N$ consider
$\Phi_{\omega}(x)=R^{\ast}\pi_{N}(x)R$. A routine calculation shows that
\begin{align}\label{Eq:OmegaintoM}
\langle\Phi_{\omega}(x)(\Omega_{\varphi}zy),\Omega_{\varphi}w\rangle
_{\varphi} & =\langle(\Phi_{\omega}(x)(\Omega_{\varphi}z))y,\Omega_{\varphi}w\rangle_{\varphi}
\end{align} for every $y,z,w\in M$, hence
$R^{\ast}\pi_{N}(x)R\in M$ and $\Phi_{\omega}$ is a normal completely
positive map from $N$ into $M$. Note that $\Phi_{\omega}(1_{N})$ $=R^{\ast}R=$ $1_{M}$ and
$\varphi\circ\Phi_{\omega}(x)=\omega(x\otimes1_{M})=\rho(x)$ for $x\in N$. By (\ref{Eq: Diagonal Action Preservation}) of Defn. \ref{DefinitionJoiningasState} , for all
$x\in N$ and $y,z\in M$, 
\begin{align}\label{Eq:BreakthroughStuff}
\langle\Phi_{\omega}(\alpha_{g}(x))\Omega_{\varphi}y,\Omega_{\varphi}
z\rangle_{\varphi}  &  =\langle R^{\ast}\pi_{N}(\alpha_{g}(x))R\Omega
_{\varphi}y,\Omega_{\varphi}z\rangle_{\varphi}\\\nonumber
&  =\omega(\alpha_{g}(x)\otimes(yz^{\ast})^{op})\nonumber\\
&  =\omega(\alpha_{g}(x)\otimes\beta_{g}^{op}\beta_{g^{-1}}^{op}((yz^{\ast
})^{op})\nonumber\\
&  =\omega(x\otimes\beta_{g^{-1}}^{op}(yz^{\ast})^{op})\nonumber\\
&  =\langle \pi_{N}(x) \pi_{M^{op}}(\beta_{g^{-1}}^{op}(yz^{\ast})^{op}
)\xi_{\omega},\xi_{\omega}\rangle_{\omega}\nonumber\\
&  =\langle \pi_{N}(x)RV_{g}^{\ast}J_{\varphi}(zy^{\ast})J_{\varphi}
V_{g}\Omega_{\varphi},R\Omega_{\varphi} \rangle_{\omega}\nonumber\\
&  =\langle R^{\ast}\pi_{N}(x)RV_{g}^{\ast}J_{\varphi}(zy^{\ast})J_{\varphi}\Omega_{\varphi},\Omega_{\varphi} \rangle_{\omega}\nonumber\\
&  =\langle V_{g}\Phi_{\omega}(x)V_{g}^{\ast}J_{\varphi}(zy^{\ast})J_{\varphi}\Omega_{\varphi},V_{g}\Omega_{\varphi} \rangle_{\omega}\nonumber\\
&  =\langle V_{g}\Phi_{\omega}(x)V_{g}^{\ast}J_{\varphi}(zy^{\ast})J_{\varphi}\Omega_{\varphi},\Omega_{\varphi} \rangle_{\omega}\nonumber\\
&  =\langle\beta_{g}(\Phi_{\omega}(x))\Omega_{\varphi}y,\Omega_{\varphi
}z\rangle_{\varphi},\nonumber
\end{align}
and therefore $\Phi_{\omega}(\alpha_{g}(x))=\beta_{g}(\Phi_{\omega}(x))$ for
all $g\in G $. Analogously, $\Phi_{\omega}(\sigma_{t}^{\rho}(x))=\sigma
_{t}^{\varphi}(\Phi_{\omega}(x))$ for all $t\in\mathbb{R}$.

For all $x_{1},x_{2}\in N$ and $y_{1},y_{2}\in M$
\begin{align}
\langle x_{1}\xi_{\Phi_{\omega}}y_{1},x_{2}\xi_{\Phi_{\omega}}y_{2}
\rangle_{\Phi_{\omega}}  & =\langle\Phi_{\omega}(x_{2}^{\ast}x_{1})\Omega_{\varphi}y_{1}%
,\Omega_{\varphi}y_{2}\rangle_{\varphi}\nonumber\label{Eq: BimExchWD}\\
&  =\langle(R^{\ast}\pi_{N}(x_{2}^{\ast}x_{1})R\Omega_{\varphi})y_{1}
,\Omega_{\varphi}y_{2}\rangle_{\varphi}\nonumber\\
&  =\langle R^{\ast}\pi_{N}(x_{2}^{\ast}x_{1})R\Omega_{\varphi},\Omega
_{\varphi}y_{2}y_{1}^{\ast}\rangle_{\varphi}\nonumber\\
&  =\langle x_{1}\xi_{\omega}y_{1},x_{2}\xi_{\omega}y_{2}\rangle_{\omega
}\text{.}\nonumber
\end{align}
Therefore, $x\xi_{\omega}y\mapsto x\xi_{\Phi_{\omega}}y$ for $x\in
N$, $y\in M$ extends to an isomorphism of correspondences
\begin{equation*}
W:\mathcal{H}_{\omega}\rightarrow\mathcal{H}_{\Phi_{\omega}}\text{.}
\label{Eq: BimoduleExchangeMap}
\end{equation*}
\end{proof}

 In the remainder of the paper, for a given $\omega \in J_{s}(\mathfrak{N},\mathfrak{M})$, the notation $\Phi_{\omega}$ will denote the completely positive map constructed above in the proof of Theorem \ref{Popa's Lemma copy(1)}.

\begin{definition}
\label{JoiningMarkovDefinition}Let $J_{m}(\mathfrak{N},\mathfrak{M})=\{\Phi:N\rightarrow M |\Phi$
is $(\rho,\varphi)$--Markov, $\beta_{g}\circ\Phi=\Phi\circ\alpha_{g}$ for $g\in G \}$\footnote{ Here, `m' stands for `Markov map'.}. For $\Phi\in J_{m}(\mathfrak{N},\mathfrak{M})$, let $\omega_{\Phi
}$ denote the state on $N\odot M^{op}$ extending $\omega_{\Phi}(x\otimes
y^{op})=\langle x\xi_{\Phi}y,\xi_{\Phi}\rangle_{\Phi}$, for $x\in N$, $y\in M$\footnote{Again, the functional $\omega_{\Phi}$ is positive since it is the composition of a state and a $*-$representation, this time the representation coming from the Stinespring construction}.
\end{definition}

The following shows that every equivariant Markov map between two systems
naturally gives a joining.

\begin{theorem}
\label{CharacterizeJoins} If $\Phi\in J_{m}(\mathfrak{N},\mathfrak{M})$ then
$\omega_{\Phi}\in J_{s}(\mathfrak{N},\mathfrak{M})$.
\end{theorem}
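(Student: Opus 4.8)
The plan is to verify directly that $\omega_\Phi$ satisfies the three requirements of Definition \ref{DefinitionJoiningasState}: that it is a state, that its marginals are $\rho$ and $\varphi$ (Eq. \eqref{Eq: Pushforwards}), and that it is invariant under the doubled group and modular actions (Eq. \eqref{Eq: Diagonal Action Preservation}). The starting point is to unwind Definition \ref{JoiningMarkovDefinition} using the Stinespring inner product \eqref{StinespringInnerProduct}. Since the bimodule structure on $\mathcal{H}_\Phi$ gives $x\xi_\Phi y = x\otimes y$ and $\xi_\Phi = 1_N\otimes 1_M$, taking $x_1 = x$, $y_1 = y$, $x_2 = 1_N$, $y_2 = 1_M$ in \eqref{StinespringInnerProduct} yields the working formula
\[
\omega_\Phi(x\otimes y^{op}) = \langle \Phi(x)\Omega_\varphi y, \Omega_\varphi\rangle_\varphi,
\]
where $\Phi(x)$ acts on the left and $y$ acts on the right in $L^2(M,\varphi)$. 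Positivity of $\omega_\Phi$ is the content of the footnote to Definition \ref{JoiningMarkovDefinition} (it is a vector state precomposed with the $\ast$-representation $x\otimes y^{op}\mapsto \pi_N(x)\pi_{M^{op}}(y^{op})$ coming from Stinespring), and $\omega_\Phi(1_N\otimes 1_M^{op}) = \langle\Phi(1_N)\Omega_\varphi,\Omega_\varphi\rangle_\varphi = \varphi(1_M) = 1$, so $\omega_\Phi$ is a state.

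For the marginals, the first is immediate: $\omega_\Phi(x\otimes 1_M^{op}) = \langle\Phi(x)\Omega_\varphi,\Omega_\varphi\rangle_\varphi = \varphi(\Phi(x)) = \rho(x)$ by the Markov condition $\varphi\circ\Phi = \rho$. The second marginal is where the non-tracial setting first intrudes: here I would write the right action explicitly as $\Omega_\varphi y = J_\varphi y^\ast J_\varphi\Omega_\varphi$ and use $J_\varphi\Omega_\varphi = \Omega_\varphi$ together with the conjugate-linear identity $\langle J_\varphi\zeta,\eta\rangle_\varphi = \langle J_\varphi\eta,\zeta\rangle_\varphi$ to compute $\omega_\Phi(1_N\otimes y^{op}) = \langle J_\varphi y^\ast J_\varphi\Omega_\varphi,\Omega_\varphi\rangle_\varphi = \langle y\Omega_\varphi,\Omega_\varphi\rangle_\varphi = \varphi(y)$.

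The two invariance conditions in \eqref{Eq: Diagonal Action Preservation} are handled by the same mechanism, which I expect to be the main obstacle, since it is precisely where equivariance, the implementing unitaries, and the modular conjugation must be combined with care. For the group action, recall that $\beta_g^{op}(y^{op}) = (\beta_g(y))^{op}$, so that $\omega_\Phi(\alpha_g(x)\otimes\beta_g^{op}(y^{op})) = \langle\Phi(\alpha_g(x))\Omega_\varphi\beta_g(y),\Omega_\varphi\rangle_\varphi$. Equivariance gives $\Phi(\alpha_g(x)) = \beta_g(\Phi(x)) = V_g\Phi(x)V_g^\ast$, while for the right action I would again pass to $J_\varphi$ and use that the canonical implementing unitary $V_g$ fixes $\Omega_\varphi$ and commutes with $J_\varphi$ (see \S2.2), obtaining $\Omega_\varphi\beta_g(y) = V_g(\Omega_\varphi y)$. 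The two copies of $V_g$ then combine to give $V_g\bigl(\Phi(x)\Omega_\varphi y\bigr)$, and since $V_g^\ast\Omega_\varphi = \Omega_\varphi$ the unitary cancels against the second slot, leaving $\langle\Phi(x)\Omega_\varphi y,\Omega_\varphi\rangle_\varphi = \omega_\Phi(x\otimes y^{op})$. The modular invariance is verbatim the same computation with $\Delta_\varphi^{it}$ in place of $V_g$, now invoking the Markov intertwining $\sigma_t^\varphi\circ\Phi = \Phi\circ\sigma_t^\rho$ and the facts $\Delta_\varphi^{it}\Omega_\varphi = \Omega_\varphi$ and $\Delta_\varphi^{it}J_\varphi = J_\varphi\Delta_\varphi^{it}$. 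Assembling these four verifications shows $\omega_\Phi\in J_s(\mathfrak N,\mathfrak M)$. As an alternative, one could note that $\omega_\Phi = \omega_{(\mathcal H_\Phi,\xi_\Phi)}$ and instead check that $(\mathcal H_\Phi,\xi_\Phi)$ satisfies the conditions of Definition \ref{DefinitionofJoiningsasCorrespondences}, so that Theorem \ref{JoiningFromBimodule} applies directly; but the direct route above is shorter and reuses the same modular bookkeeping.
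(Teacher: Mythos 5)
Your proposal is correct and follows essentially the same route as the paper's proof: a direct verification of Definition \ref{DefinitionJoiningasState} from the working formula $\omega_{\Phi}(x\otimes y^{op})=\langle\Phi(x)\Omega_{\varphi}y,\Omega_{\varphi}\rangle_{\varphi}$, with the marginals handled by the Markov property and the $J_{\varphi}$ bookkeeping, and both invariance conditions obtained by writing the right action as $J_{\varphi}(\cdot)^{\ast}J_{\varphi}$ and cancelling the implementing unitaries $V_{g}$ (resp. $\Delta_{\varphi}^{it}$) against the fixed vector $\Omega_{\varphi}$. The only cosmetic difference is that the paper carries the $(\cdot)^{op}$ notation through each line of the computation, while you phrase the same cancellation via $\Omega_{\varphi}\beta_{g}(y)=V_{g}(\Omega_{\varphi}y)$; the substance is identical.
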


\begin{proof}
For $x\in N$ and $y\in M$,
\begin{equation*}
\omega_{\Phi}(x\otimes1_{M}^{op})=\langle x\xi_{\Phi},\xi_{\Phi}\rangle_{\Phi
}=\varphi(\Phi(x))=\rho(x) \label{Eq: CharacterizeJoinsFirstEq}
\end{equation*}
and
\begin{equation}\label{Eq: CharacterizeJoinsSecondEqn}
\omega_{\Phi}(1_{N}\otimes y^{op})=\langle\xi_{\Phi}y,\xi_{\Phi}\rangle_{\Phi
}=\varphi(y). 
\end{equation}
For all $g\in G $,
\begin{align}\label{Eq: CharacterizeJoinsThirdEqn}
\omega_{\Phi}(\alpha_{g}(x)\otimes(\beta^{op}_{g})(y^{op}))  &  =\langle\alpha
_{g}(x)\xi_{\Phi}\beta^{op}_{g}(y),\xi_{\Phi}\rangle_{\Phi}
\\
&  =\langle \Phi(\alpha_{g}(x))\Omega_{\varphi}\beta_{g}(y),\Omega_{\varphi} \rangle_{\varphi} \nonumber\\
&  =\langle \Phi(\alpha_{g}(x))(\beta_{g}(y))^{op}\Omega_{\varphi},\Omega_{\varphi} \rangle_{\varphi} \nonumber\\
&  =\langle \beta_{g}(\Phi(x))(\beta_{g}(y))^{op}\Omega_{\varphi},\Omega_{\varphi} \rangle_{\varphi} \nonumber\\
&  =\langle \beta_{g}(\Phi(x))\beta_{g}^{op}(y^{op})\Omega_{\varphi},\Omega_{\varphi} \rangle_{\varphi} \nonumber\\
&  =\langle V_{g}(\Phi(x))V_{g}^{\ast}V_{g}J_{\varphi}y^{\ast}J_{\varphi}V_{g}^{\ast}\Omega_{\varphi},\Omega_{\varphi} \rangle_{\varphi} \nonumber\\
&  =\langle \Phi(x)J_{\varphi}y^{\ast}J_{\varphi}\Omega_{\varphi},\Omega_{\varphi} \rangle_{\varphi} \nonumber\\
&  =\omega_{\Phi}(x\otimes y^{op}),\nonumber
\end{align}
and therefore $\omega_{\Phi}\circ(\alpha_{g}\otimes\beta_{g}^{op})=\omega$.
Similarly, $\omega\circ(\sigma_{t}^{\rho}\otimes(\sigma_{t}^{\varphi}
)^{op})=\omega$\\ for $t\in\mathbb{R}$.
\end{proof}

We now prove that $\omega\mapsto\Phi_{\omega}$ and $\Phi\mapsto\omega_{\Phi}$
are inverses of one another, providing the alternative point of view of
joinings as equivariant Markov maps. The benefit of this translation will
become especially evident in \S\ref{Erg}, where working with Markov maps (rather than states on an algebraic tensor product) is seen to be the more natural approach to proving noncommutative
analogues of classical joining results.  

\begin{theorem}
\label{CharacterizeReverse} For all $\omega\in J_{s}(\mathfrak{N}
,\mathfrak{M})$ and $\Phi\in J_{m}(\mathfrak{N},\mathfrak{M})$, $\omega
_{\Phi_{\omega}}=\omega$ and $\Phi_{\omega_{\Phi}}=\Phi$.
\end{theorem}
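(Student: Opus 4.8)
The plan is to reduce both identities to a single matrix--element identity relating the map $\Phi_{\omega}$ built in Theorem \ref{Popa's Lemma copy(1)} to the joining $\omega$, together with the elementary observation that the Stinespring inner product \eqref{StinespringInnerProduct} computes $\omega_{\Phi}$ on simple tensors. Concretely, I claim that for every $\omega\in J_{s}(\mathfrak{N},\mathfrak{M})$, $x\in N$ and $y\in M$,
\[
\langle\Phi_{\omega}(x)\Omega_{\varphi}y,\Omega_{\varphi}\rangle_{\varphi}=\omega(x\otimes y^{op}),\qquad(\star)
\]
while for every $\Phi\in J_{m}(\mathfrak{N},\mathfrak{M})$,
\[
\omega_{\Phi}(x\otimes y^{op})=\langle\Phi(x)\Omega_{\varphi}y,\Omega_{\varphi}\rangle_{\varphi}.\qquad(\star\star)
\]
Granting these, both conclusions are immediate. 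Applying $(\star\star)$ to $\Phi_{\omega}$ and then $(\star)$ gives $\omega_{\Phi_{\omega}}(x\otimes y^{op})=\langle\Phi_{\omega}(x)\Omega_{\varphi}y,\Omega_{\varphi}\rangle_{\varphi}=\omega(x\otimes y^{op})$; applying $(\star)$ to the joining $\omega_{\Phi}$ and then $(\star\star)$ gives $\langle\Phi_{\omega_{\Phi}}(x)\Omega_{\varphi}y,\Omega_{\varphi}\rangle_{\varphi}=\omega_{\Phi}(x\otimes y^{op})=\langle\Phi(x)\Omega_{\varphi}y,\Omega_{\varphi}\rangle_{\varphi}$.

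Identity $(\star\star)$ is read off Definition \ref{JoiningMarkovDefinition}: since $x\xi_{\Phi}y$ is the class of $x\otimes y$ and $\xi_{\Phi}$ is the class of $1_{N}\otimes1_{M}$, the Stinespring formula \eqref{StinespringInnerProduct} with $x_{1}=x$, $y_{1}=y$, $x_{2}=1_{N}$, $y_{2}=1_{M}$ yields exactly the right-hand side. For $(\star)$ I would unwind the definition $\Phi_{\omega}=R^{\ast}\pi_{N}(\cdot)R$ from the proof of Theorem \ref{Popa's Lemma copy(1)}, where $R=R_{\Omega_{\varphi},\xi_{\omega}}$ is the unitary right exchange map with $R(\Omega_{\varphi}w)=\xi_{\omega}w$ for $w\in M$, so in particular $R\Omega_{\varphi}=\xi_{\omega}$. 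Because $\Phi_{\omega}(x)\in M$ acts on the left of $L^{2}(M,\varphi)$ while $y$ acts on the right, $\Phi_{\omega}(x)\Omega_{\varphi}y=R^{\ast}\pi_{N}(x)R(\Omega_{\varphi}y)=R^{\ast}(x\xi_{\omega}y)$; pairing with $\Omega_{\varphi}$ and passing $R$ across as a unitary gives $\langle x\xi_{\omega}y,R\Omega_{\varphi}\rangle_{\mathcal{H}_{\omega}}=\langle x\xi_{\omega}y,\xi_{\omega}\rangle_{\mathcal{H}_{\omega}}=\omega(x\otimes y^{op})$, the last equality being the defining property of the GNS vector $\xi_{\omega}$ recorded in Theorem \ref{BimoduleFromJoining}.

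Two points require care, the second being the only genuine obstacle. First, for $\omega_{\Phi_{\omega}}=\omega$ I should note that $\omega_{\Phi_{\omega}}$ and $\omega$ are states on $N\odot M^{op}$ agreeing on all simple tensors, hence equal by linearity. Second, to upgrade $(\star)$ into the operator equality $\Phi_{\omega_{\Phi}}=\Phi$ I have agreement of the matrix elements only with the second vector fixed at $\Omega_{\varphi}$; I must argue this determines the element of $M$. Here I would invoke the standard--form symmetry $\Omega_{\varphi}y=J_{\varphi}y^{\ast}J_{\varphi}\Omega_{\varphi}$: for $a\in M$, since $a$ commutes with $J_{\varphi}y^{\ast}J_{\varphi}\in M'$, one gets $\langle a\Omega_{\varphi}y,\Omega_{\varphi}\rangle_{\varphi}=\langle a\Omega_{\varphi},\Omega_{\varphi}y^{\ast}\rangle_{\varphi}$, and as $y$ ranges over $M$ the vectors $\Omega_{\varphi}y^{\ast}$ range over the dense set $\Omega_{\varphi}M=M'\Omega_{\varphi}$, so these values determine $a\Omega_{\varphi}$ and hence $a$ (as $\Omega_{\varphi}$ is separating for $M$). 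Taking $a=\Phi_{\omega_{\Phi}}(x)-\Phi(x)$ yields $\Phi_{\omega_{\Phi}}=\Phi$. I expect this separating--vector step to be where one must be most careful, since it is precisely the place where the non-tracial nature of the standard form enters and the symmetry of $J_{\varphi}$ must be used rather than a trace.
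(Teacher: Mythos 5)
Your proposal is correct and takes essentially the same approach as the paper: both identities are reduced, via the right exchange map $R=R_{\Omega_{\varphi},\xi_{\omega}}$ and the GNS/Stinespring inner-product formulas, to an equality of matrix elements of $\Phi_{\omega}(x)$ (resp. $\Phi_{\omega_{\Phi}}(x)$) against vectors of the form $\Omega_{\varphi}y$. The only difference is cosmetic: where you fix the second vector at $\Omega_{\varphi}$ and recover the operator equality $\Phi_{\omega_{\Phi}}=\Phi$ by the $J_{\varphi}$-commutation and separating-vector argument, the paper keeps both right multipliers $y,z\in M$ free in $\langle\,\cdot\,\Omega_{\varphi}y,\Omega_{\varphi}z\rangle_{\varphi}$ and reads off the operator equality from density of $\Omega_{\varphi}M=M^{\prime}\Omega_{\varphi}$ directly.
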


\begin{proof}
Let $\omega\in J_{s}(\mathfrak{N},\mathfrak{M})$. Then for $x\in N$ and $y\in
M$,
\begin{align}
\omega_{\Phi_{\omega}}(x\otimes y^{op})  &  =\langle x\xi_{\Phi_{\omega}}
y,\xi_{\Phi_{\omega}}\rangle_{\Phi_{\omega}}\\
&  =\langle \Phi_{\omega}(x)\Omega_{\varphi}y,\Omega_{\varphi} \rangle \nonumber\\
& =\langle (R^{*}\pi_{N}(x)R)\Omega_{\varphi}y,\Omega_{\varphi} \rangle \nonumber\\
& =\langle (\pi_{N}(x)R)\Omega_{\varphi}y, R \Omega_{\varphi} \rangle \nonumber\\
& =\langle x \xi_{\omega}y, \xi_{\omega} \rangle \nonumber \\
&  =\omega(x\otimes y^{op}),\nonumber
\end{align}
and hence $\omega_{\Phi_{\omega}}=\omega$.

Let $\Phi\in J_{m}(\mathfrak{N},\mathfrak{M})$ and $R=R_{\Omega_{\varphi}
,\xi_{\omega_{\Phi}}}$. For $x\in N$ and $y,z\in M$,
\begin{align}\label{Eq: CharacterizeRevSecondEqn}
\langle\Phi_{\omega_{\Phi}}(x)\Omega_{\varphi}y,\Omega_{\varphi}
z\rangle_{\varphi}  &  =\langle R^{\ast}\pi_{N}(x)R\Omega_{\varphi}
y,\Omega_{\varphi}z\rangle_{\varphi}\\
&  =\langle x\xi_{\omega_{\Phi}}y,\xi_{\omega_{\Phi}}z\rangle_{\omega_{\Phi}
}\nonumber\\
&  =\omega_{\Phi}(x\otimes(yz^{\ast})^{op})\nonumber\\
&  =\langle x\xi_{\Phi}y,\xi_{\Phi}z\rangle_{\Phi}\nonumber\\
&  =\langle\Phi(x)\Omega_{\varphi}y,\Omega_{\varphi}z\rangle_{\varphi
},\nonumber
\end{align}
therefore, $\Phi_{\omega_{\Phi}}=\Phi$.
\end{proof}

The above results together yield the following fact.

\begin{corollary}
\label{MarkovisNormal}Every $\Phi\in J_{m}(\mathfrak{N},\mathfrak{M})$ is a normal map.
\end{corollary}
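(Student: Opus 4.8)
The plan is to read off normality for free from the three preceding results, which together establish a bijection between joinings (states in $J_s(\mathfrak{N},\mathfrak{M})$) and equivariant Markov maps (elements of $J_m(\mathfrak{N},\mathfrak{M})$), under which normality has already been verified on the state side. The key observation is that the definition of $J_m(\mathfrak{N},\mathfrak{M})$ does \emph{not} build in normality of $\Phi$; the content of the corollary is precisely that normality is automatic once one passes through the correspondence picture.

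First I would take an arbitrary $\Phi\in J_m(\mathfrak{N},\mathfrak{M})$ and apply Theorem \ref{CharacterizeJoins} to produce the associated state $\omega_{\Phi}\in J_s(\mathfrak{N},\mathfrak{M})$. This is a bona fide joining, hence an admissible input to Theorem \ref{Popa's Lemma copy(1)}. I would then form the Markov map $\Phi_{\omega_{\Phi}}$ attached to $\omega_{\Phi}$ by the construction in that theorem. Crucially, Theorem \ref{Popa's Lemma copy(1)} asserts that $\Phi_{\omega_{\Phi}}$ is a \emph{normal} $(\rho,\varphi)$--Markov map: normality there is not a hypothesis but a conclusion, flowing from the fact (established in Theorem \ref{BimoduleFromJoining}) that the left action $\pi_N$ on $\mathcal{H}_{\omega_{\Phi}}$ is normal, together with the formula $\Phi_{\omega_{\Phi}}(\cdot)=R^{\ast}\pi_N(\cdot)R$ for a fixed unitary exchange map $R$, since conjugation by a fixed unitary preserves normality.

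Finally I would invoke Theorem \ref{CharacterizeReverse}, which gives $\Phi_{\omega_{\Phi}}=\Phi$. As the left-hand side is normal, so is $\Phi$, completing the argument. I do not expect any genuine obstacle here: the substantive work was already carried out when proving that $\Phi_{\omega}$ is normal in Theorem \ref{Popa's Lemma copy(1)}, and the only thing this corollary adds is the remark that the correspondences $\omega\mapsto\Phi_{\omega}$ and $\Phi\mapsto\omega_{\Phi}$ are mutually inverse, so that every element of $J_m(\mathfrak{N},\mathfrak{M})$ arises as some $\Phi_{\omega}$ and thereby inherits normality.
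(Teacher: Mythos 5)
Your proposal is correct and is exactly the paper's own proof: apply Theorem \ref{CharacterizeJoins} to get $\omega_{\Phi}\in J_{s}(\mathfrak{N},\mathfrak{M})$, note that $\Phi_{\omega_{\Phi}}$ is normal by Theorem \ref{Popa's Lemma copy(1)}, and conclude via Theorem \ref{CharacterizeReverse} that $\Phi=\Phi_{\omega_{\Phi}}$ is normal. The extra remark about why normality holds in Theorem \ref{Popa's Lemma copy(1)} (conjugation of the normal representation $\pi_{N}$ by the unitary exchange map $R$) is a faithful gloss on that theorem's proof, not a deviation.
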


\begin{proof}
By Theorem \ref{CharacterizeJoins}, $\omega_{\Phi}\in J_{s}(\mathfrak{N}
,\mathfrak{M})$, by Theorem \ref{CharacterizeReverse}, $\Phi_{\omega_{\Phi}
}=\Phi$ and by Theorem \ref{Popa's Lemma copy(1)} $\Phi_{\omega_{\Phi}}$ is normal.
\end{proof}

The next theorem summarizes the results of \S3 and the results of the present section thus far and adds (3), which will not be used later in this paper.  Together these show precisely how a joining of
two systems $\mathfrak{N}$ and $\mathfrak{M}$ can be viewed alternatively as an element of
one of the three sets $J_{s}(\mathfrak{N,M})$, $J_{h}(\mathfrak{N,M})$ or
$J_{m}(\mathfrak{N,M})$.

\begin{theorem}
\label{JoiningDictionary} Let $\mathfrak{N}=(N,\rho,\alpha, G )$ and
$\mathfrak{M}=(M,\varphi,\beta, G )$ be $W^{\ast}$--dynamical systems.

\begin{enumerate}
\item \label{JoinDictStateBimoduleSummary}For all $\omega\in J_{s}
(\mathfrak{N,M})$, the GNS Hilbert space $(\mathcal{H}_{\omega},\xi_{\omega
})\in J_{h}(\mathfrak{N,M})$, for all $(\mathcal{H},\xi)\in J_{h}
(\mathfrak{N,M})$ the vector state $\omega_{\xi}\in J_{s}(\mathfrak{N,M})$ and
furthermore, $\omega_{\xi_{\omega}}=\omega$ and $(\mathcal{H}_{\omega_{\xi}
},\xi_{\omega_{\xi}})\cong(\mathcal{H},\xi)$ as pointed $N$--$M$ correspondences.

\item \label{JoinDictStateMarkovSummary}For all $\omega\in J_{s}
(\mathfrak{N,M})$, the $(\rho,\varphi)$-Markov map $\Phi_{\omega}\in
J_{m}(\mathfrak{N,M})$, for all $\Phi\in J_{m}(\mathfrak{N,M})$ the vector state
$\omega_{\Phi}\in J_{s}(\mathfrak{N,M})$, and furthermore $\Phi_{\omega_{\Phi
}}=\Phi$ and $\omega_{\Phi_{\omega}}=\omega$.

\item \label{JoinDictMarkovBimoduleSummary}For all $\Phi\in J_{m}(\mathfrak{N,M}%
)$, the Stinespring bimodule $(\mathcal{H}_{\Phi},\xi_{\Phi})\in
J_{h}(\mathfrak{N,M})$, for all $(\mathcal{H},\xi)\in J_{h}(\mathfrak{N,M})$
the associated completely positive map $\Phi_{\xi}(\cdot)=R_{\Omega_{M},\xi
}^{\ast}\pi_{N}(\cdot)R_{\Omega_{M},\xi}\in J_{m}(\mathfrak{N,M})$ and furthermore
$\Phi_{\xi_{\Phi}}=\Phi$ and $(\mathcal{H}_{\Phi_{\xi}},\xi_{\Phi_{\xi}}
)\cong(\mathcal{H},\xi)$ as pointed correspondences.
\end{enumerate}
\end{theorem}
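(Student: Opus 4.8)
The plan is to observe that parts (\ref{JoinDictStateBimoduleSummary}) and (\ref{JoinDictStateMarkovSummary}) merely collect results already in hand, so that the only genuinely new content is the direct correspondence in part (\ref{JoinDictMarkovBimoduleSummary}) between equivariant Markov maps and pointed correspondences. I would obtain that last bijection by composing the two bijections $J_{s}\leftrightarrow J_{h}$ and $J_{s}\leftrightarrow J_{m}$, using the GNS/Stinespring correspondence isomorphism already produced along the way.

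For part (\ref{JoinDictStateBimoduleSummary}), I would simply cite Theorems \ref{BimoduleFromJoining} and \ref{JoiningFromBimodule} for the assignments $\omega\mapsto(\mathcal{H}_{\omega},\xi_{\omega})$ and $(\mathcal{H},\xi)\mapsto\omega_{(\mathcal{H},\xi)}$, and invoke Remark \ref{GNSisomorphism} for the fact that these are mutually inverse up to pointed-correspondence isomorphism, via uniqueness in the GNS construction. For part (\ref{JoinDictStateMarkovSummary}), the map $\omega\mapsto\Phi_{\omega}$ lands in $J_{m}$ by Theorem \ref{Popa's Lemma copy(1)}, the map $\Phi\mapsto\omega_{\Phi}$ lands in $J_{s}$ by Theorem \ref{CharacterizeJoins}, and the identities $\Phi_{\omega_{\Phi}}=\Phi$ and $\omega_{\Phi_{\omega}}=\omega$ are precisely Theorem \ref{CharacterizeReverse}.

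For the new part (\ref{JoinDictMarkovBimoduleSummary}), I would argue as follows. Given $\Phi\in J_{m}$, put $\omega=\omega_{\Phi}\in J_{s}$. Then $\Phi_{\omega_{\Phi}}=\Phi$ by Theorem \ref{CharacterizeReverse}, while the isomorphism $W$ constructed in the proof of Theorem \ref{Popa's Lemma copy(1)} identifies the GNS correspondence $(\mathcal{H}_{\omega_{\Phi}},\xi_{\omega_{\Phi}})$ with the Stinespring correspondence $(\mathcal{H}_{\Phi_{\omega_{\Phi}}},\xi_{\Phi_{\omega_{\Phi}}})=(\mathcal{H}_{\Phi},\xi_{\Phi})$; since the former lies in $J_{h}$ by Theorem \ref{BimoduleFromJoining}, so does the latter. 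Conversely, given $(\mathcal{H},\xi)\in J_{h}$, I would identify $\Phi_{\xi}$ with $\Phi_{\omega_{(\mathcal{H},\xi)}}$, since $R_{\Omega_{\varphi},\xi}^{\ast}\pi_{N}(\cdot)R_{\Omega_{\varphi},\xi}$ is exactly the map built in Theorem \ref{Popa's Lemma copy(1)} from the state $\omega_{(\mathcal{H},\xi)}$; hence $\Phi_{\xi}\in J_{m}$. The two claimed identities then follow by chasing the bijections: $\Phi_{\xi_{\Phi}}=\Phi_{\omega_{\Phi}}=\Phi$, and $(\mathcal{H}_{\Phi_{\xi}},\xi_{\Phi_{\xi}})\cong(\mathcal{H}_{\omega_{(\mathcal{H},\xi)}},\xi_{\omega_{(\mathcal{H},\xi)}})\cong(\mathcal{H},\xi)$, using part (\ref{JoinDictStateBimoduleSummary}) together with $W$ once more.

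The main obstacle is the well-definedness and isomorphism-invariance of the assignment $(\mathcal{H},\xi)\mapsto\Phi_{\xi}$ for a \emph{general} pointed correspondence in $J_{h}$, rather than one already presented as a GNS or Stinespring bimodule. I would dispatch this by noting that any isomorphism of pointed correspondences intertwines the left representations $\pi_{N}$ and carries one right exchange map $R_{\Omega_{\varphi},\xi}$ to the other, so that $R_{\Omega_{\varphi},\xi}^{\ast}\pi_{N}(\cdot)R_{\Omega_{\varphi},\xi}$ depends only on the isomorphism class of $(\mathcal{H},\xi)$; condition (\ref{ExchangeMapIsomorphismCondition}) of Definition \ref{DefinitionofJoiningsasCorrespondences} guarantees that $R_{\Omega_{\varphi},\xi}$ is a genuine unitary, so the expression makes sense and manifestly lands in $M$. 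Everything else is bookkeeping with the commuting triangle of bijections, verifying that the relevant diagrams commute up to the canonical pointed-correspondence isomorphisms.
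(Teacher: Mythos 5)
Your proposal is correct, and for parts (1) and (2) it coincides with the paper's own proof, which likewise just cites Theorems \ref{BimoduleFromJoining}, \ref{JoiningFromBimodule}, \ref{Popa's Lemma copy(1)}, \ref{CharacterizeJoins}, \ref{CharacterizeReverse} (together with the GNS uniqueness observation of Remark \ref{GNSisomorphism}). The genuine divergence is in part (3). The paper proves (3) by direct verification: it checks by hand that the Stinespring bimodule $(\mathcal{H}_{\Phi},\xi_{\Phi})$ satisfies conditions (\ref{ExchangeMapIsomorphismCondition}) and (\ref{UnitaryRepGammaandRealonH}) of Definition \ref{DefinitionofJoiningsasCorrespondences} (well--definedness of $L_{\xi_{\Phi},\Omega_{\rho}}$ from faithfulness of $\rho$, unitarity of $\Pi_{g}$ from equivariance of $\Phi$), and then by separate explicit computations that $\Phi_{\xi}$ is unital, state--preserving, equivariant and Markov. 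You instead deduce (3) formally by composing the bijections of (1) and (2): you transport $J_{h}$--membership across the isomorphism $W$ of Theorem \ref{Popa's Lemma copy(1)} (applied to $\omega_{\Phi}$, using $\Phi_{\omega_{\Phi}}=\Phi$), and you identify $\Phi_{\xi}$ with $\Phi_{\omega_{(\mathcal{H},\xi)}}$. This route is sound and more economical --- nothing is recomputed in the proof of (3) --- but its price is exactly the two lemmas you isolate at the end: that membership in $J_{h}(\mathfrak{N},\mathfrak{M})$, and the assignment $(\mathcal{H},\xi)\mapsto R_{\Omega_{\varphi},\xi}^{\ast}\pi_{N}(\cdot)R_{\Omega_{\varphi},\xi}$, are invariant under isomorphism of pointed correspondences. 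These are routine (an isomorphism intertwines the two bimodule actions and carries cyclic vector to cyclic vector, hence conjugates the exchange maps and the operators $\Pi_{g}$, $T_{t}$), but they must actually be recorded, because the Markov map of Theorem \ref{Popa's Lemma copy(1)} is constructed on the GNS space $\mathcal{H}_{\omega_{(\mathcal{H},\xi)}}$ and not on $\mathcal{H}$ itself. One wording to repair: that $\Phi_{\xi}(x)$ lies in $M$, rather than merely in $\mathbf{B}(L^{2}(M,\varphi))$, is not ``manifest''; it comes from the commutation argument of Eq. \eqref{Eq:OmegaintoM}, which your identification $\Phi_{\xi}=\Phi_{\omega_{(\mathcal{H},\xi)}}$ inherits from Theorem \ref{Popa's Lemma copy(1)}, whereas the paper's direct proof makes the analogous computations in situ.
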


\begin{proof}
The first statement follows directly from Theorem \ref{BimoduleFromJoining} and Theorem \ref{JoiningFromBimodule} once one checks that $\omega_{\xi_{\omega}}=\omega$ and that the bimodule map sending $\xi_{\omega_{\xi}}$ to $\xi$ extends to a unitary. The second statement summarizes Theorems \ref{Popa's Lemma copy(1)}, \ref{CharacterizeJoins} and \ref{CharacterizeReverse}.

It remains to prove the third statement. Let $\Phi\in J_{m}(\mathfrak{N,M})$ and let $(\mathcal{H}_{\Phi},\xi_{\Phi})$ be the associated Stinespring bimodule. We must show that this bimodule is in $J_{h}(\mathfrak{N,M})$. The left exchange map $L_{\xi_{\Phi},\Omega_{\rho}}$, if well-defined, will be a left $N$--module map $L_{\xi_{\Phi},\Omega_{\rho}}:N\xi_{\Phi}\rightarrow L^{2}(N,\rho)$. We must show this map is well--defined and extends to a unitary. Suppose $x \in N$ and $x \xi_{\Phi}=0$, then
\begin{align*}
\langle x \xi_{\Phi}, x \xi_{\Phi} \rangle=\phi(\Phi(x^{*}x))
& =\rho(x^{*}x)=0
\end{align*}

and hence $x=0$, and $L_{\xi_{\Phi},\Omega_{\rho}}$ is well--defined. An analogous computation establishes that the map extends to a unitary. We omit the similar argument needed for right exchange maps. Let $g\in G$ be given. Then 
\begin{align*}
\langle \Pi_{g}(x \xi_{\Phi} y), \Pi_{g} (z \xi_{\Phi} w) \rangle_{\Phi}
& = \langle \alpha_{g}(x) \xi_{\Phi} \beta_{g}(y), \alpha_{g}(z) \xi_{\Phi} \beta_{g}(w) \rangle_{\Phi} \\
& = \langle \Phi(\alpha_{g}(z^{*}x)) \Omega_{\varphi} \beta_{g}(y), \Omega_{\varphi} \beta_{g}(w) \rangle_{\varphi} \nonumber \\
& = \langle \beta_{g}(\Phi(z^{*}x)) \Omega_{\varphi} \beta_{g}(y), \Omega_{\varphi} \beta_{g}(w) \rangle_{\varphi} \nonumber \\
& = \langle V_{g}((\Phi(z^{*}x)) \Omega_{\varphi} y, V_{g}(\Omega_{\varphi} w) \rangle_{\varphi} \nonumber \\
& = \langle \Phi(z^{*}x) \Omega_{\varphi} y, \Omega_{\varphi} w \rangle_{\varphi} \nonumber \\
& = \langle x \xi_{\Phi} y, z \xi_{\Phi} w \rangle_{\Phi} \nonumber
\end{align*}
establishing that $\Pi_{g}$ extends to a unitary. Proofs of the similar result for $T_{t}$ and the statement that the associated unitary representations are strongly continuous are omitted.

Assume now that $(\mathcal{H},\xi)\in J_{h}(\mathfrak{N,M})$. Then $\Phi_{\xi}(\cdot)=R_{\Omega_{M},\xi
}^{\ast}\pi_{N}(\cdot)R_{\Omega_{M},\xi}$ is obviously a normal completely positive map. Furthermore,
\begin{align*}
R_{\Omega_{M},\xi}^{\ast}\pi_{N}(1_N)R_{\Omega_{M},\xi}(\Omega_{M}z)
= R_{\Omega_{M},\xi}^{\ast}(\xi z)=\Omega_{M} z
\end{align*}
for all $z \in M$, and
\begin{align*}
\varphi(\Phi_{\xi}(x))=\langle R_{\Omega_{M},\xi}^{\ast}\pi_{N}(x)R_{\Omega_{M},\xi} \Omega_{M}, \Omega_{M} \rangle = \langle x \xi, \xi \rangle =\rho (x) \nonumber
\end{align*}
for all $x \in N$, so the map is unital and preserves states. Given $g \in G$ we have for all $x \in N$ and $y,z \in M$ that
\begin{align*}
&\langle \Phi_{\xi}(\alpha_{g}(x))\Omega_{M} y, \Omega_{M} z \rangle\\ & =
\langle R_{\Omega_{M},\xi}^{\ast}\pi_{N}(\alpha_{g}(x))R_{\Omega_{M},\xi}\Omega_{M} y, \Omega_{M} z \rangle \\
& =\langle \alpha_{g}(x)\xi y, \xi z \rangle \nonumber \\
& =\langle x\xi \beta_{g^{-1}}(y), \xi \beta_{g^{-1}}(z) \rangle \nonumber\\
& = \langle R_{\Omega_{M},\xi}^{\ast}\pi_{N}(x)R_{\Omega_{M},\xi}\Omega_{M} \beta_{g^{-1}}(y), \Omega_{M} \beta_{g^{-1}}(z) \rangle \nonumber \\
& = \langle V_{g}(R_{\Omega_{M},\xi}^{\ast}\pi_{N}(x)R_{\Omega_{M},\xi}\Omega_{M} \beta_{g^{-1}}(y)), V_g(\Omega_{M} \beta_{g^{-1}}(z)) \rangle \nonumber \\
& = \langle  \beta_{g}(\Phi_{\xi}(x))\Omega_{M} y, \Omega_{M} z \rangle
\end{align*}
establishing that $\Phi_{\xi}$ intertwines the $G$--actions. The argument is identical for the Markov property. Finally, the bimodule map sending the cyclic vector $\xi_{\Phi_{\xi}}$ to the cyclic vector $\xi$ is readily checked to extend to a unitary and the proof of (3) is complete.
\end{proof}
Physically, quantum decoherence is the loss of quantum information in a
channel due to its interaction with the environment. In the absence of quantum decoherence, an isolated nonrelativistic quantum system is modelled by an operator algebra of observables
which evolves in time by a one--parameter family of state--preserving
automorphisms implemented by unitaries. A coherent quantum channel between two
systems seamlessly patches together their time evolutions. Coherence of
quantum channels is mathematically modelled by Markov maps as above. The fact
that there should be no quantum information loss in a coherent quantum channel
is further reflected in the following result of Accardi and Cecchini (cf.
Proposition 6.1 of \cite{AC} or Lemma 2.5 of \cite{AD}): Let $\Phi
:N\rightarrow M$ be a normal unital completely positive map. Then there
exists a normal unital completely positive map $\Phi^{\ast}:M\rightarrow N$
satisfying
\begin{equation}
\rho(\Phi^{\ast}(y)x)=\varphi(y\Phi(x))\label{Eq: AccardiCecciniAdjoint}
\end{equation}
for all $y\in M$ and $x\in N$ if and only if $\varphi\circ\Phi=\rho$ and
$\Phi\circ\sigma_{t}^{\rho}=\sigma_{t}^{\varphi}\circ\Phi$ for all
$t\in\mathbb{R}$. Our next theorem ensures that if $\Phi$ is additionally an
equivariant quantum channel, then the adjoint channel $\Phi^{\ast}$ will also be
equivariant.

\begin{theorem}
\label{AdjointisEquivariant}If $\Phi\in J_{m}(\mathfrak{N},\mathfrak{M})$ then $\Phi^{*}\in J_{m}(\mathfrak{N},\mathfrak{M}).$
\end{theorem}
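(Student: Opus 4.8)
The plan is to reduce everything to the Accardi--Cecchini adjoint recalled immediately above, together with a single cancellation argument powered by faithfulness of $\rho$. Since $\Phi \in J_m(\mathfrak{N},\mathfrak{M})$ is a $(\rho,\varphi)$--Markov map, it satisfies $\varphi \circ \Phi = \rho$ and $\Phi \circ \sigma_t^\rho = \sigma_t^\varphi \circ \Phi$ for all $t$, so the Accardi--Cecchini criterion supplies a normal unital completely positive map $\Phi^* : M \to N$ with $\rho(\Phi^*(y)x) = \varphi(y\Phi(x))$ for all $x \in N$, $y \in M$. Setting $x = 1_N$ gives at once $\rho(\Phi^*(y)) = \varphi(y)$, i.e. $\rho \circ \Phi^* = \varphi$. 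Thus the only remaining points are the modular intertwining $\sigma_t^\rho \circ \Phi^* = \Phi^* \circ \sigma_t^\varphi$ (which, with normality and unitality, makes $\Phi^*$ a $(\varphi,\rho)$--Markov map) and the equivariance $\alpha_g \circ \Phi^* = \Phi^* \circ \beta_g$.

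Both of these I would obtain as instances of one lemma, isolated first: if $\theta \in Aut(N,\rho)$ and $\psi \in Aut(M,\varphi)$ satisfy $\Phi \circ \theta = \psi \circ \Phi$, then $\theta \circ \Phi^* = \Phi^* \circ \psi$. To prove it, fix $x \in N$, $y \in M$ and compute $\rho(\theta(\Phi^*(y))\,x)$ by pulling $\theta$ out: using $\rho \circ \theta = \rho$ and that $\theta$ is a homomorphism, it equals $\rho(\Phi^*(y)\,\theta^{-1}(x)) = \varphi(y\,\Phi(\theta^{-1}(x))) = \varphi(y\,\psi^{-1}(\Phi(x)))$, where the last step uses $\Phi \circ \theta^{-1} = \psi^{-1} \circ \Phi$. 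On the other hand $\rho(\Phi^*(\psi(y))\,x) = \varphi(\psi(y)\,\Phi(x)) = \varphi(y\,\psi^{-1}(\Phi(x)))$ by $\varphi \circ \psi = \varphi$. Hence $\rho\big((\theta(\Phi^*(y)) - \Phi^*(\psi(y)))\,x\big) = 0$ for every $x \in N$; taking $x = (\theta(\Phi^*(y)) - \Phi^*(\psi(y)))^*$ and invoking faithfulness of $\rho$ forces $\theta(\Phi^*(y)) = \Phi^*(\psi(y))$.

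Applying the lemma with $(\theta,\psi) = (\sigma_t^\rho, \sigma_t^\varphi)$ --- legitimate since the Markov condition for $\Phi$ is exactly $\Phi \circ \sigma_t^\rho = \sigma_t^\varphi \circ \Phi$ --- yields $\sigma_t^\rho \circ \Phi^* = \Phi^* \circ \sigma_t^\varphi$, completing the verification that $\Phi^*$ is $(\varphi,\rho)$--Markov; applying it with $(\theta,\psi) = (\alpha_g, \beta_g)$ and the hypothesis $\Phi \circ \alpha_g = \beta_g \circ \Phi$ yields $\alpha_g \circ \Phi^* = \Phi^* \circ \beta_g$. Together these place $\Phi^*$ in $J_m(\mathfrak{M},\mathfrak{N})$, which is the content of the theorem (the adjoint necessarily reverses the roles of the two systems). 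The step I expect to require the most care is the cancellation at the end of the lemma: because $\rho$ is merely a faithful state and not a trace, one cannot simply cancel $x$, and it is essential to feed in the specific test element $(\theta(\Phi^*(y)) - \Phi^*(\psi(y)))^*$ so that faithfulness applies to a positive element $c c^*$. I would also emphasize that no KMS or analyticity considerations intrude, precisely because each of $\sigma_t^\rho, \sigma_t^\varphi, \alpha_g, \beta_g$ preserves its state exactly; consequently the modular intertwining for $\Phi^*$ is proved by the identical computation as the group equivariance, rather than by a separate analytic argument.
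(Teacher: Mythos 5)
Your proof is correct and its core is exactly the paper's argument: the same chain of equalities
$\rho(\Phi^{*}(\beta_{g}(y))x)=\varphi(\beta_{g}(y)\Phi(x))=\varphi(y\,\beta_{g^{-1}}(\Phi(x)))=\varphi(y\,\Phi(\alpha_{g^{-1}}(x)))=\rho(\Phi^{*}(y)\alpha_{g^{-1}}(x))=\rho(\alpha_{g}(\Phi^{*}(y))x)$,
followed by the faithfulness cancellation; your abstract lemma with a pair $(\theta,\psi)$ of intertwined state--preserving automorphisms is just this computation with $(\alpha_{g},\beta_{g})$ replaced by generic symbols. The one genuine difference is completeness: the paper proves only equivariance of $\Phi^{*}$, taking its $(\varphi,\rho)$--Markov property for granted from the cited Accardi--Cecchini result \cite{AC} (cf.\ also \cite{AD}), whereas you rederive the state preservation (setting $x=1_{N}$ in the adjoint relation) and the modular intertwining (the instance $(\theta,\psi)=(\sigma_{t}^{\rho},\sigma_{t}^{\varphi})$ of your lemma), so group equivariance and modular covariance of $\Phi^{*}$ fall out of a single argument. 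You are also right that the conclusion should be $\Phi^{*}\in J_{m}(\mathfrak{M},\mathfrak{N})$; the $J_{m}(\mathfrak{N},\mathfrak{M})$ in the statement is a typo, consistent with Theorem \ref{ConditionalExpectationisMarkov}, where $\iota^{*}\in J_{m}(\mathfrak{M},\mathfrak{N})$.
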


\begin{proof}
Assume $\Phi\in J_{m}(\mathfrak{N},\mathfrak{M})$. Then, in particular, $\Phi\circ\alpha_{g}=\beta_{g}\circ\Phi$ for all $g\in G $ and for
$g\in G $, $x\in N$ and $y\in M$
\begin{align*}
\rho(\Phi^{\ast}(\beta_{g}(y))x)  &  =\varphi(\beta_{g}
(y)\Phi(x))\label{Eq: AdjointisEquivariantCalculation}\\
&  =\varphi(y\beta_{g^{-1}}(\Phi(x)))\nonumber\\
&  =\varphi(y\Phi(\alpha_{g^{-1}}(x))\nonumber\\
&  =\rho(\Phi^{\ast}(y)\alpha_{g^{-1}}(x))\nonumber\\
&  =\rho(\alpha_{g}(\Phi^{\ast}(y))x)\text{.}\nonumber
\end{align*}
But $\rho$ is faithful, so the result follows.
\end{proof}

\section{Some Basic Examples}\label{JOp}

We now provide the noncommutative analogues of several standard examples of joinings. Since most of these noncommutative examples are abstruse as states on a tensor product but can be built using familiar Markov maps, we present the examples as elements of $J_{m}(\mathfrak{N,M})$. One easily translates
to $J_{s}(\mathfrak{N,M})$ and $J_{h}(\mathfrak{N,M})$ using Theorem
\ref{JoiningDictionary}.

\subsection{The Trivial Joining}

As in the classical case, the systems $\mathfrak{N}$ and $\mathfrak{M}$ always
admit the trivial joining. In the language of correspondences this is the
\textit{coarse correspondence}.

\begin{definition}\label{DisjointDefinition}
The map $\rho(\cdot)1_{M}\in J_{m}(\mathfrak{N,M})$ is the\textbf{ trivial
joining} of $\mathfrak{N}$ and $\mathfrak{M}$. The systems $\mathfrak{N}$ and
$\mathfrak{M}$ are \textbf{disjoint} if $J_{m}(\mathfrak{N,M})=\{\rho(\cdot
)1_{M}\}$. 
\end{definition}

Note that $\varphi(\cdot)1_{N}=(\rho(\cdot)1_{M})^{\ast}$, so $\mathfrak{N}$
and $\mathfrak{M}$ are disjoint if and only if $\mathfrak{M}$ and
$\mathfrak{N}$ are disjoint.

\subsection{Graph Joinings and Subsystems}

A graph joining of two classical systems is a measure that is concentrated on
the graph of a homomorphism of one system into the other.

In the noncommutative case we obtain the following definition.

\begin{definition}
\label{GraphJoiningIsomorph}A \textbf{graph joining} of $\mathfrak{N}$ and
$\mathfrak{M}$ is a $\ast$ --homomorphism $\iota\in J_{m}(\mathfrak{N,M})$.
If $\iota$ is bijective, then it is an \textbf{isomorphism graph joining.}
\end{definition}

The \textit{identity correspondence} is an example of an isomorphism graph self-joining of any system $\mathfrak{N}$. We note that the quantum diagonal measures considered in \cite{Fi} are isomorphism graph joinings sans modular invariance.

Our graph joinings may be considered the modular invariant versions of the *-- homomorphisms that define factors in Definition 3.2 of \cite{Du}. We mention that Construction 3.4 of \cite{Du} begins with an equivariant injective $*-$homomorphism from one system to another and produces a joining (in the sense of \cite{Du}) of the second system with the opposite of the first. The corresponding completely positive map is the generalized conditional expectation that is bidual (cf. \cite{AC}) to the inclusion defined by the injective $*-$homomorphism.  Whether we use the original $*-$homomorphism or the result of Construction 3.4 of \cite{Du}, an opposite algebra is involved. This observation, together with the fact that there are von Neumann algebras which are not antiisomorphic to themselves (cf. \cite{Co2}), including the opposite algebra in the data used to define a joining appears necessary. Otherwise, an appropriate notion of graph self joining for systems arising from actions on such von Neumann algebras is not readily available. 

The following definition of subsystem is similar to the definition of modular subsystem in \cite{Du3}, the difference again being the presence of the opposite algebra. As in \cite{Du3}, the modular condition is needed to ensure the existence of a unique state--preserving conditional expectation.

\begin{definition}
\label{Subsystem}A \textbf{subsystem }of $\mathfrak{M}$ is a pair
$(\mathfrak{N},\iota)$, where $\iota\in J_{m}(\mathfrak{N,M})$ is an injective graph joining.
If $N\subset M$ we may say $\mathfrak{N}$ is a subsystem of $\mathfrak{M}$,
the inclusion being understood.
\end{definition}

It is easy to check that if $\iota\in J_{m}(\mathfrak{N,M})$ is an injective graph joining then
$\iota^{-1}:\iota(N)\rightarrow N$ is a unital $\ast$-isomorphism satisfying
$\rho\circ\iota^{-1}=\varphi$, $\iota^{-1}\circ\sigma_{t}^{\varphi}=\sigma
_{t}^{\rho}\circ\iota^{-1}$ and $\alpha_{g}\circ\iota^{-1}=\iota^{-1}%
\circ\beta_{g}$ for all $t\in\mathbb{R}$ and $g\in G $. 

The following is a slight modification of Example 2.7 (c) of \cite{AD}. This
is an important fact, so we include a proof for completeness.

\begin{theorem}
\label{ConditionalExpectationisMarkov}If $\iota\in J_{m}(\mathfrak{N,M})$ is an injective graph joining then there is a unique faithful normal conditional expectation
$\mathbb{E}_{\iota(N)}$ of $M$ onto $\iota(N)$ satisfying $\varphi \circ \mathbb{E}_{\iota(N)}=\varphi$, and $\iota^{*}=\iota^{-1}\circ
\mathbb{E}_{_{\iota(N)}}\in J_{m}(\mathfrak{M,N})$.
\end{theorem}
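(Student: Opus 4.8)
The plan is to manufacture the conditional expectation via Takesaki's theorem and then recognize $\iota^{-1}\circ\mathbb{E}_{\iota(N)}$ as the Accardi--Cecchini adjoint of $\iota$, so that its membership in $J_{m}(\mathfrak{M},\mathfrak{N})$ is handed to us by Theorem \ref{AdjointisEquivariant}. First I would establish that $\iota(N)$ is a von Neumann subalgebra of $M$ that is globally invariant under the modular group of $\varphi$. Since $\iota\in J_{m}(\mathfrak{N},\mathfrak{M})$ is normal by Corollary \ref{MarkovisNormal}, the injective normal unital $\ast$-homomorphism $\iota$ carries $N$ onto a $\sigma$-weakly closed unital $\ast$-subalgebra $\iota(N)\subseteq M$. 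The modular intertwining $\sigma_{t}^{\varphi}\circ\iota=\iota\circ\sigma_{t}^{\rho}$ built into the definition of a Markov map then gives $\sigma_{t}^{\varphi}(\iota(N))=\iota(N)$ for every $t\in\mathbb{R}$. This is precisely the hypothesis of Takesaki's theorem, which produces a unique faithful normal conditional expectation $\mathbb{E}_{\iota(N)}\colon M\to\iota(N)$ with $\varphi\circ\mathbb{E}_{\iota(N)}=\varphi$; uniqueness is part of Takesaki's theorem, so the first assertion is complete.

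For the second assertion I would set $\iota^{\ast}=\iota^{-1}\circ\mathbb{E}_{\iota(N)}$, which is unital completely positive and normal, being the composition of $\mathbb{E}_{\iota(N)}$ with the normal $\ast$-isomorphism $\iota^{-1}\colon\iota(N)\to N$. The key computation is to verify the Accardi--Cecchini adjoint relation $\rho(\iota^{\ast}(y)x)=\varphi(y\,\iota(x))$ for all $y\in M$ and $x\in N$. Using $\rho=\varphi\circ\iota$ and that $\iota$ is a homomorphism, the left-hand side equals $\varphi(\mathbb{E}_{\iota(N)}(y)\,\iota(x))$; since $\iota(x)\in\iota(N)$, the module property of the conditional expectation gives $\mathbb{E}_{\iota(N)}(y)\,\iota(x)=\mathbb{E}_{\iota(N)}(y\,\iota(x))$, and then $\varphi\circ\mathbb{E}_{\iota(N)}=\varphi$ yields $\varphi(y\,\iota(x))$ as desired. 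Because $\rho$ is faithful, this relation pins $\iota^{\ast}$ down as exactly the Accardi--Cecchini adjoint of the equivariant Markov map $\iota$, whence Theorem \ref{AdjointisEquivariant} places $\iota^{\ast}$ in $J_{m}(\mathfrak{M},\mathfrak{N})$.

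As an alternative to invoking Theorem \ref{AdjointisEquivariant}, one can verify the Markov and equivariance conditions for $\iota^{\ast}$ directly, using the relations $\rho\circ\iota^{-1}=\varphi$, $\iota^{-1}\circ\sigma_{t}^{\varphi}=\sigma_{t}^{\rho}\circ\iota^{-1}$ and $\alpha_{g}\circ\iota^{-1}=\iota^{-1}\circ\beta_{g}$ recorded after Definition \ref{Subsystem}, together with the fact that $\mathbb{E}_{\iota(N)}$ commutes with both $\sigma_{t}^{\varphi}$ and $\beta_{g}$. These commutations follow from the same uniqueness argument that underlies the first assertion: $\sigma_{t}^{\varphi}\circ\mathbb{E}_{\iota(N)}\circ\sigma_{-t}^{\varphi}$ (resp. $\beta_{g}\circ\mathbb{E}_{\iota(N)}\circ\beta_{g}^{-1}$) is again a $\varphi$-preserving faithful normal conditional expectation onto $\iota(N)$, since $\sigma_{t}^{\varphi}$ and $\beta_{g}$ preserve $\varphi$ and fix $\iota(N)$ setwise, and hence coincides with $\mathbb{E}_{\iota(N)}$.

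I expect the only step requiring genuine care to be the verification that $\iota(N)$ satisfies the hypotheses of Takesaki's theorem: that $\iota$ has $\sigma$-weakly closed image, which is exactly where normality from Corollary \ref{MarkovisNormal} is indispensable, and that the modular condition in the definition of a Markov map is precisely what forces $\iota(N)$ to be modular-invariant. This is the point the remark preceding the theorem flags as the reason the modular condition is imposed in the definition of a subsystem; once the conditional expectation is in hand, everything reduces to formal manipulation of its module property, of $\varphi\circ\mathbb{E}_{\iota(N)}=\varphi$, and of the intertwining relations for $\iota^{-1}$.
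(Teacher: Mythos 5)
Your proposal is correct and follows essentially the same route as the paper: invoke Takesaki's theorem (via the modular invariance of $\iota(N)$ forced by the Markov property) to obtain the unique $\varphi$-preserving faithful normal conditional expectation, then verify the Accardi--Cecchini adjoint relation $\rho(\iota^{-1}(\mathbb{E}_{\iota(N)}(y))x)=\varphi(y\,\iota(x))$ by the same chain of manipulations (state preservation, homomorphism property, module property), and conclude via Theorem \ref{AdjointisEquivariant}. Your added care about the $\sigma$-weak closedness of $\iota(N)$ and the alternative direct verification of equivariance by uniqueness of the expectation are sound refinements of details the paper leaves implicit.
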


\begin{proof}
By hypothesis $\sigma_{t}^{\varphi}(\iota(N))\subset N$ for all $t\in
\mathbb{R}$. By \cite{Ta} there exists a unique faithful, normal conditional
expectation $\mathbb{E}_{\iota(N)}$ of $M$ onto $\iota(N)$ such that
$\varphi\circ\mathbb{E}_{\iota(N)}=\varphi$. Notice that for every $y\in M$
and $x\in N$%
\begin{align}
\rho((\iota^{-1}(\mathbb{E}_{\iota(B)}(y)))x) &  =\rho((\iota^{-1}%
(\mathbb{E}_{\iota(N)}(y)))\iota^{-1}\iota(x))\\\nonumber
&  =\rho(\iota^{-1}(\mathbb{E}_{\iota(N)}(y)\iota(x)))\\\nonumber
&  =\varphi(\mathbb{E}_{\iota(N)}(y)\iota(x))\\\nonumber
&  =\varphi(\mathbb{E}_{\iota(N)}(y\iota(x)))\\\nonumber
&  =\varphi(y\iota(x)),
\end{align}
yielding $\iota^{\ast}=\iota^{-1}\circ\mathbb{E}_{\iota(N)}$. By Proposition
\ref{AdjointisEquivariant} we have $\iota^{-1}\circ\mathbb{E}_{\iota(N)}\in
J_{m}(\mathfrak{M,N})$.
\end{proof}

A \textbf{group subsystem} of $\mathfrak{N}$ is a subsystem of the form $(N^{K},\rho|_{N^{K}},\alpha, G )$ where
$K$ is a closed subgroup of $Aut^{ G }(N)$, the group of $ G 
$-equivariant automorphisms of $N$ that preserve $\rho$, and $N^{K}$ is the fixed point algebra of $K$ in $N$. We also remark that
if $M$ is a factor of type $\rm{III}_{\lambda}$ with $\lambda \in (0,1)$ then $\mathfrak{M}$
automatically has a nontrivial subsystem coming from the centralizer
$M^{\varphi}$ of the state $\varphi$.

\subsection{Relatively Independent Joinings over a Common Subsystem}

Two systems $\mathfrak{N}$ and $\mathfrak{M}$ are said to have $\mathfrak{B}$
as a common subsystem if there exist injective graph joinings $\iota_{N}\in
J_{m}(\mathfrak{B},\mathfrak{N})$ and $\iota_{M}\in J_{m}(\mathfrak{B},\mathfrak{M})$
such that $(\mathfrak{B},\iota_{N})$ is a subsystem of $\mathfrak{N}$ and
$(\mathfrak{B},\iota_{M})$ is a subsystem of $\mathfrak{M}$. 

\begin{definition}
The \textbf{relatively independent joining} of $\mathfrak{N}$ and
$\mathfrak{M}$ over $\mathfrak{B}$ with respect to the injective graph joinings
$\iota_{N}\in J_{m}(\mathfrak{B},\mathfrak{N})$ and $\iota_{M}\in J_{m}(\mathfrak{B}%
,\mathfrak{M})$ is $\iota_{M}\circ\iota_{N}^{\ast}$.
\end{definition}

Notice that $\iota_{N}\circ\iota_{M}^{\ast}=(\iota_{M}\circ\iota_{N}^{\ast
})^{\ast}$. By Theorem 6.8 of \cite{Gl} the above generalizes the classical notion of relatively independent joining.

\section{Ergodicity and Weak Mixing}\label{Erg}

In this section, we provide characterizations of classical notions like
ergodicity, primeness and weak mixing in terms of joinings.  Theorem 
\ref{OtherwayWmDisjointCompact} and \ref{CharacterizeWeakMixing} are the main results of this 
section, which establish that a system disjoint from every compact system is weakly mixing. Central to these results is the fact that one can control the invariant subspaces of the unitary representation associated to an ergodic system $\mathfrak{M}$ -- specifically, we will show that they all come from the underlying von Neumann algebra itself and, in particular, from the centralizer of the invariant state for the system.  This is accomplished by our main technical tool in Theorem 6.10, which is a variation of the main result of \cite{HLS} that we find to be of independent interest. 
 
We first need some preparation.

\subsection{Intertwiners of Representations Induced by Equivariant 
Markov Maps}

As before, let $N$ and $M$ be von Neumann algebras equipped with faithful normal states
$\rho$ and $\varphi$ respectively. Let $\Phi:N\rightarrow M$ be a u.c.p. map such that $\varphi\circ\Phi=\rho$. Then $\Phi$ is normal 
$($see Corollary  \ref{MarkovisNormal}$)$. Define $T_{\Phi}:L^{2}(N,\rho)\rightarrow L^{2}(M,\varphi)$ by $T_{\Phi}(x\Omega_{\rho}%
)=\Phi(x)\Omega_{\varphi}$ for all $x\in N$. The operator $T_{\Phi}$ is, a priori, an unbounded
operator. However, by Kadison's inequality we have
\begin{align}\label{T_phi}
\langle T_{\Phi}(x\Omega_{\rho}),T_{\Phi}(x\Omega_{\rho})\rangle_{\varphi}  &
=\langle\Phi(x)\Omega_{\varphi},\Phi(x)\Omega_{\varphi}\rangle_{\varphi}\\
\nonumber&  =\varphi(\Phi(x^{\ast})\Phi(x))\\
\nonumber&  \leq\varphi(\left\Vert \Phi(1)\right\Vert \Phi(x^{\ast}x))\\
\nonumber&  =\rho(x^{\ast}x)\\
\nonumber&  =\langle x\Omega_{\rho},x\Omega_{\rho}\rangle_{\rho}.
\end{align}
Thus, $T_{\Phi}$ extends to a bounded operator from $L^{2}(N,\rho)$ to
$L^{2}(M,\varphi)$ of norm $1$, as $\norm{T_{\Phi}(\Omega_{\rho})}_{\varphi}=1$. Also 
note that from Eq. \eqref{Eq: AccardiCecciniAdjoint} it follows that 
\begin{align}\label{adjointT_phi}
T_{\Phi}^{*} = T_{\Phi^{*}}.
\end{align}

The following observation will be used frequently in this section and beyond. Let
$\mathfrak{N}$ and $\mathfrak{M}$ be systems, and let $\Phi\in J_{m}(\mathfrak{N}
,\mathfrak{M})$. Since $\Phi$ is $ G $--equivariant and a $(\rho,\varphi)$--Markov map, it follows that $\Phi
\circ\alpha_{g}=\beta_{g}\circ\Phi$ and $\Phi
\circ\sigma_{t}^{\rho}=\sigma_{t}^{\varphi}\circ\Phi$ for all $g\in G $ and $t\in\mathbb{R}$. Thus, for $x\in N$
and $g\in G $, we have
\begin{align}
&  \Phi(\alpha_{g}(x))\Omega_{\varphi}=\beta_{g}(\Phi(x))\Omega_{\varphi
},\text{ and equivalently, }\\\nonumber
&  T_{\Phi}U_{g}(x\Omega_{\rho})=V_{g}T_{\Phi}(x\Omega_{\rho}).\\\nonumber
\text{Similarly, for $t \in \mathbb{R}$ we have }&\\
&  \Phi(\sigma_{t}^{\rho}(x))\Omega_{\varphi}=\sigma_{t}^{\varphi}(\Phi(x))\Omega_{\varphi
},\text{ and equivalently, }\\\nonumber
&  T_{\Phi}\Delta^{it}_{\rho}(x\Omega_{\rho})=\Delta^{it}_{\varphi}T_{\Phi}(x\Omega_{\rho}).
\end{align}
Consequently, for all $g\in G$ and $t \in \mathbb{R},$ we obtain
\begin{align}\label{Eq: Crucial Equation}
&T_{\Phi}U_{g}=V_{g}T_{\Phi};\\
\nonumber&T_{\Phi}\Delta^{it}_{\rho}=\Delta^{it}_{\varphi}T_{\Phi},
\end{align}
as well as 
\begin{align}\label{Eq: Crucial Equation1}
&T_{\Phi}U_{g}\Delta^{it}_{\rho}=V_{g}\Delta^{it}_{\varphi}T_{\Phi};\\
\nonumber&T_{\Phi}\Delta^{it}_{\rho}U_{g}=\Delta^{it}_{\varphi}V_{g}T_{\Phi}.
\end{align}
Note that since $\sigma_{t}^{\rho}\alpha_{g}=\alpha_{g}\sigma_{t}^{\rho}$ and  
$\sigma_{t}^{\varphi}\beta_{g}=\beta_{g}\sigma_{t}^{\varphi}$ for all 
$g\in  G $ and $t\in \mathbb{R}$, and since the vacuum vectors are fixed by $U_{g}$ and $V_{g}$, both of the expressions in Eq. \eqref{Eq: Crucial Equation1} carry the same information. 
Let $\pi_{U}\cdot\pi_{{\Delta}_{\rho}}$ and $\pi_{V}\cdot\pi_{{\Delta}_{\varphi}}$ be the representations
of $ G \times \mathbb{R}$ given by $(g,t)\mapsto U_{g}\Delta_{\rho}^{it}$ and 
$(g,t)\mapsto V_{g}\Delta_{\varphi}^{it}$, respectively. The above discussion can be formally 
summarized in the following  proposition.

\begin{proposition}\label{Prop:Intertwine}
The operator $T_{\Phi}$ intertwines the representations$:$\\
$(i)$ $\pi_{U}$ and $\pi_{V}$ of $ G $;\\
$(ii)$ $\pi_{{\Delta}_{\rho}}$ and $\pi_{{\Delta}_{\varphi}}$ of $\mathbb{R}$;\\
$(iii)$ $\pi_{U}\cdot \pi_{{\Delta}_{\rho}}$ and $\pi_{V}\cdot \pi_{{\Delta}_{\varphi}}$ of $ G \times \mathbb{R}$.\\
Moreover, if $T_{\Phi}=W\abs{T_{\Phi}}$ denotes the polar decomposition of $T_{\Phi}$, then 
\begin{align*}
(i)^{\circ}\text{ }W\pi_{U}=\pi_{V}W,\text{ } (ii)^{\circ}\text{ }W\pi_{{\Delta}_{\rho}}=\pi_{{\Delta}_{\varphi}}W,\text{ }(iii)^{\circ}\text{ }W\pi_{U}\cdot\pi_{{\Delta}_{\rho}}=\pi_{V}\cdot\pi_{{\Delta}_{\varphi}}W.
\end{align*}
\end{proposition}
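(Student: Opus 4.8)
The plan is to separate the statement into two parts: the bare intertwining relations (i)--(iii), which require essentially no new work, and the polar-decomposition refinements $(i)^{\circ}$--$(iii)^{\circ}$, which carry all the content and which I would handle by isolating a single general fact about intertwiners and applying it three times.

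For (i)--(iii): the identities $T_{\Phi}U_{g}=V_{g}T_{\Phi}$, $T_{\Phi}\Delta^{it}_{\rho}=\Delta^{it}_{\varphi}T_{\Phi}$ and $T_{\Phi}U_{g}\Delta^{it}_{\rho}=V_{g}\Delta^{it}_{\varphi}T_{\Phi}$ were already derived on the dense subspace $N\Omega_{\rho}$ in Equations \eqref{Eq: Crucial Equation} and \eqref{Eq: Crucial Equation1}. Since $T_{\Phi}$ is bounded by the Kadison inequality estimate \eqref{T_phi}, and each of $U_{g}$, $V_{g}$, $\Delta^{it}_{\rho}$, $\Delta^{it}_{\varphi}$ is unitary, these identities extend by continuity to all of $L^{2}(N,\rho)$, giving (i)--(iii) at once. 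For (iii) I would first record that $\pi_{U}\cdot\pi_{{\Delta}_{\rho}}$ and $\pi_{V}\cdot\pi_{{\Delta}_{\varphi}}$ are genuine unitary representations of $G\times\mathbb{R}$: the commutation $\sigma_{t}^{\rho}\alpha_{g}=\alpha_{g}\sigma_{t}^{\rho}$ noted before the proposition forces $U_{g}\Delta^{it}_{\rho}=\Delta^{it}_{\rho}U_{g}$ (and similarly on the $\varphi$ side), so the maps $(g,t)\mapsto U_{g}\Delta^{it}_{\rho}$ are homomorphisms into the unitary group.

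For $(i)^{\circ}$--$(iii)^{\circ}$ I would prove the following general lemma and apply it with $(\pi,\pi')$ equal to $(\pi_{U},\pi_{V})$, then $(\pi_{{\Delta}_{\rho}},\pi_{{\Delta}_{\varphi}})$, then $(\pi_{U}\cdot\pi_{{\Delta}_{\rho}},\pi_{V}\cdot\pi_{{\Delta}_{\varphi}})$: if a bounded operator $T$ intertwines unitary representations $\pi$ and $\pi'$, i.e. $T\pi(s)=\pi'(s)T$ for all $s$, and $T=W\abs{T}$ is its polar decomposition, then $W\pi(s)=\pi'(s)W$ for all $s$. The steps are as follows. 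First, taking adjoints of $T\pi(s)=\pi'(s)T$ and using $\pi(s)^{\ast}=\pi(s^{-1})$, $\pi'(s)^{\ast}=\pi'(s^{-1})$ yields the reverse intertwining $\pi(s)T^{\ast}=T^{\ast}\pi'(s)$. Combining the two relations shows that $T^{\ast}T$ commutes with every $\pi(s)$, hence by Borel functional calculus so does $\abs{T}=(T^{\ast}T)^{1/2}$; consequently $\pi(s)$ leaves both $\ker\abs{T}=\ker T$ and its complement $\overline{\operatorname{ran}\abs{T}}=(\ker T)^{\perp}$ invariant. On the range of $\abs{T}$ one then computes
\[
W\pi(s)\abs{T}=W\abs{T}\pi(s)=T\pi(s)=\pi'(s)T=\pi'(s)W\abs{T},
\]
so $W\pi(s)$ and $\pi'(s)W$ agree on $\operatorname{ran}\abs{T}$, and therefore, by continuity, on all of $(\ker T)^{\perp}$. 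On $\ker T=\ker\abs{T}=\ker W$ the operator $\pi'(s)W$ vanishes outright, while $W\pi(s)$ vanishes because $\pi(s)$ preserves $\ker\abs{T}=\ker W$; thus $W\pi(s)=\pi'(s)W$ everywhere.

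The only genuinely delicate point is the behavior of $W$ on $\ker T$. The intertwining on the initial space $(\ker T)^{\perp}$ falls out immediately from the commutation of $\abs{T}$ with $\pi(s)$, but to conclude the identity on the whole space one must know that $\pi(s)$ carries $\ker T$ into $\ker W$. This is precisely where unitarity of the representation is used a second time, beyond producing the adjoint relation: it guarantees that $\pi(s)$ respects the spectral decomposition of $\abs{T}$ and in particular preserves $\ker\abs{T}$. I expect this to be the main obstacle in a careful write-up; the remainder is routine functional calculus together with the boundedness of $T_{\Phi}$ already in hand.
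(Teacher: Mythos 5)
Your proposal is correct and follows essentially the same route as the paper: parts (i)--(iii) are exactly the continuity extensions of Eq. \eqref{Eq: Crucial Equation} and \eqref{Eq: Crucial Equation1}, and for $(i)^{\circ}$--$(iii)^{\circ}$ the paper simply cites the standard representation-theoretic fact you prove in full (Proposition A.1.4 of \cite{BHV}, combined with Eq. \eqref{adjointT_phi}). Your self-contained argument for that lemma --- passing to the adjoint relation, deducing that $\abs{T_{\Phi}}$ commutes with the representation via functional calculus, and then treating $(\ker T_{\Phi})^{\perp}$ and $\ker T_{\Phi}$ separately --- is precisely the standard proof of the cited result, so nothing is missing.
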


\begin{proof}
The final statement is a direct consequence of combining Eq. \eqref{adjointT_phi} above with a standard result in unitary representation theory $($cf. Proposition A.1.4 \cite{BHV}$)$. 
\end{proof}

\subsection{Ergodicity}
We now characterize ergodic systems in terms of joinings. A system $\mathfrak{N}=(N,\alpha,G,\rho)$ is \textbf{ergodic} if its fixed point algebra is trivial, i.e. $N^{G}=\{x\in N:\alpha_{g}(x)=x\}=\mathbb{C}1_{N}$. A system $\mathfrak{M}=(M,\beta, G ,\varphi)$ is an \textbf{identity system} if $\beta$ is the trivial action, i.e. $\beta_{g}=id_{M}$ for all
$g\in G $. Two systems $\mathfrak{N}$ and $\mathfrak{M}$ are said to be disjoint if 
$J_{s}(\mathfrak{N},\mathfrak{M})=\{\rho\otimes\varphi\}$, equivalently  
 $J_{m}(\mathfrak{N},\mathfrak{M})=\{\rho(\cdot)1_{M}\}$ by Theorem \ref{CharacterizeJoins} and \ref{CharacterizeReverse}. Note that by Theorem \ref{AdjointisEquivariant} it follows that if $J_{m}(\mathfrak{N},\mathfrak{M})=\{\rho(\cdot)1_{M}\}$, then
$J_{m}(\mathfrak{M},\mathfrak{N})=\{\varphi(\cdot)1_{N}\}$.

The next theorem is fundamental in classical joining theory.
Our setup provides a natural and short proof in full generality in the framework 
of von Neumann algebras.

\begin{theorem}\label{Ergodic}
A $W^{*}$--dynamical system is ergodic if and only if it is disjoint from every identity system.
\end{theorem}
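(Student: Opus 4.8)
The plan is to prove both implications, exploiting the dictionary between joinings and equivariant Markov maps established in Theorems \ref{CharacterizeJoins} and \ref{CharacterizeReverse}, so that throughout I may work with elements of $J_{m}$ rather than with states on a tensor product. Write $\mathfrak{N}=(N,\rho,\alpha,G)$, and let $\mathfrak{M}=(M,\varphi,\beta,G)$ denote an arbitrary identity system, so that $\beta_{g}=\mathrm{id}_{M}$ for all $g\in G$. Disjointness from $\mathfrak{M}$ then means precisely that $J_{m}(\mathfrak{N},\mathfrak{M})=\{\rho(\cdot)1_{M}\}$.

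For the forward implication, suppose $\mathfrak{N}$ is ergodic and let $\Phi\in J_{m}(\mathfrak{N},\mathfrak{M})$. The cleanest route is to pass to the adjoint channel, which exists by the Accardi--Cecchini result recalled above: by Theorem \ref{AdjointisEquivariant}, $\Phi^{\ast}\in J_{m}(\mathfrak{M},\mathfrak{N})$, so $\Phi^{\ast}$ is $G$-equivariant, i.e. $\alpha_{g}\circ\Phi^{\ast}=\Phi^{\ast}\circ\beta_{g}=\Phi^{\ast}$ since $\beta_{g}=\mathrm{id}_{M}$. Hence $\alpha_{g}(\Phi^{\ast}(y))=\Phi^{\ast}(y)$ for every $y\in M$ and $g\in G$, so that $\Phi^{\ast}(y)\in N^{G}=\mathbb{C}1_{N}$ by ergodicity. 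Writing $\Phi^{\ast}(y)=c(y)1_{N}$ and feeding the defining relation $\rho(\Phi^{\ast}(y)x)=\varphi(y\Phi(x))$ the value $x=1_{N}$ gives $c(y)=\rho(\Phi^{\ast}(y))=\varphi(y)$, so $\Phi^{\ast}=\varphi(\cdot)1_{N}$. Substituting this back into the adjoint relation yields $\varphi(y\Phi(x))=\varphi(y)\rho(x)$ for all $x\in N$ and $y\in M$; taking $y=(\Phi(x)-\rho(x)1_{M})^{\ast}$ and invoking faithfulness of $\varphi$ forces $\Phi(x)=\rho(x)1_{M}$. Thus $\Phi$ is the trivial joining and $\mathfrak{N}$ is disjoint from $\mathfrak{M}$.

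For the converse I would argue by contrapositive: assuming $\mathfrak{N}$ is not ergodic, I would exhibit a single identity system from which it fails to be disjoint. The natural candidate is the fixed-point system $\mathfrak{M}_{0}=(N^{G},\rho|_{N^{G}},\mathrm{id},G)$ together with the $\rho$-preserving conditional expectation $\mathbb{E}_{N^{G}}\colon N\to N^{G}$ as candidate joining. The first step is to justify existence of $\mathbb{E}_{N^{G}}$: since each $\alpha_{g}$ preserves $\rho$ it commutes with $\sigma_{t}^{\rho}$, which shows $N^{G}$ is globally $\sigma_{t}^{\rho}$-invariant, so Takesaki's theorem \cite{Ta} supplies a unique faithful normal $\rho$-preserving conditional expectation $\mathbb{E}_{N^{G}}$ onto $N^{G}$. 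This map is u.c.p., satisfies $(\rho|_{N^{G}})\circ\mathbb{E}_{N^{G}}=\rho$, and intertwines the modular groups by the standard Takesaki property. The equivariance condition for the trivial target action, namely $\mathbb{E}_{N^{G}}\circ\alpha_{g}=\mathbb{E}_{N^{G}}$, follows because $\mathbb{E}_{N^{G}}\circ\alpha_{g}$ is again a $\rho$-preserving conditional expectation onto $N^{G}$ and so coincides with $\mathbb{E}_{N^{G}}$ by uniqueness. Thus $\mathbb{E}_{N^{G}}\in J_{m}(\mathfrak{N},\mathfrak{M}_{0})$, and $\mathbb{E}_{N^{G}}=\rho(\cdot)1$ would force $x=\mathbb{E}_{N^{G}}(x)=\rho(x)1$ for every $x\in N^{G}$, i.e. $N^{G}=\mathbb{C}1_{N}$, contradicting non-ergodicity; hence $\mathbb{E}_{N^{G}}$ is nontrivial and $\mathfrak{N}$ is not disjoint from $\mathfrak{M}_{0}$.

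The genuinely delicate points are confined to the converse. The crux is establishing that $N^{G}$ is invariant under $\{\sigma_{t}^{\rho}\}$ so that Takesaki's existence theorem applies—this is exactly where preservation of $\rho$ by the action is essential—and then leveraging the uniqueness clause of that theorem to obtain equivariance of $\mathbb{E}_{N^{G}}$ for free. I expect the main obstacle to be the bookkeeping of modular compatibility: verifying that the modular group of $\rho|_{N^{G}}$ coincides with $\sigma_{t}^{\rho}|_{N^{G}}$ and that $\mathbb{E}_{N^{G}}$ intertwines these, all standard but requiring the usual care of the non-tracial setting. By contrast, the forward implication is essentially formal once the adjoint channel of Theorem \ref{AdjointisEquivariant} is brought to bear, which is why I would organize the proof around it rather than around an $L^{2}$-level fixed-vector analysis, thereby sidestepping the question of whether algebraic ergodicity forces the $\pi_{U}$-fixed space to reduce to $\mathbb{C}\Omega_{\rho}$.
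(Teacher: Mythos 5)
Your proof is correct, and while its overall architecture matches the paper's, the forward implication is executed by a genuinely different mechanism, so a comparison is worthwhile. Your converse is precisely the paper's argument: $N^{G}$ is globally $\sigma_{t}^{\rho}$--invariant because the state--preserving $\alpha_{g}$ commute with the modular group, Takesaki's theorem supplies the unique $\rho$--preserving expectation $\mathbb{E}_{N^{G}}$, and its uniqueness yields the equivariance making it a nontrivial element of $J_{m}(\mathfrak{N},\mathfrak{M}_{0})$. For the forward implication, the paper takes $\Phi\in J_{m}(\mathfrak{M},\mathfrak{N})$ --- a joining \emph{into} the ergodic system, which suffices by the adjoint symmetry of disjointness noted just before the theorem --- and works at the $L^{2}$--level: the intertwining relation of Eq.\ \eqref{Eq: Crucial Equation} shows each $T_{\Phi}(x\Omega_{\varphi})=\Phi(x)\Omega_{\rho}$ is fixed by every $U_{g}$, and since $\Omega_{\rho}$ is separating this places $\Phi(x)$ in $N^{G}=\mathbb{C}1_{N}$. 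You instead stay entirely at the algebra level: starting from $\Phi\in J_{m}(\mathfrak{N},\mathfrak{M})$, you pass to the Accardi--Cecchini adjoint, use Theorem \ref{AdjointisEquivariant} together with triviality of $\beta$ to conclude $\Phi^{\ast}(y)\in N^{G}=\mathbb{C}1_{N}$ directly from equivariance, and then recover $\Phi=\rho(\cdot)1_{M}$ by the Cauchy--Schwarz/faithfulness computation. The two mechanisms are essentially equivalent --- your $\Phi^{\ast}$ plays exactly the role of the paper's $\Phi$ --- but your version requires no $L^{2}$--intertwiner and no separating--vector argument (as you observe, you never need to know that the $\pi_{U}$--fixed vectors reduce to $\mathbb{C}\Omega_{\rho}$), at the cost of an extra faithfulness step to pass from triviality of $\Phi^{\ast}$ back to triviality of $\Phi$, a step the paper renders unnecessary by invoking the remark that disjointness is symmetric under adjoints.
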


\begin{proof}
Suppose $\mathfrak{N}$ is an ergodic system and $\mathfrak{M}$ is an identity
system. Suppose $\Phi\in J_{m}(\mathfrak{M},\mathfrak{N})$. Then for all $x\in M$ and all
$g\in G $ we have
\begin{align*}
T_{\Phi}(x\Omega_{\varphi})=T_{\Phi}(V_{g}x\Omega_{\varphi})=U_{g}T_{\Phi
}(x\Omega_{\varphi})\text{, from Eq. } \eqref{Eq: Crucial Equation}.
\end{align*}
Hence by ergodicity, $T_{\Phi}(x\Omega_{\varphi})\in\mathbb{C}\Omega_{\rho}$. But $T_{\Phi
}(x\Omega_{\varphi})=\Phi(x)\Omega_{\rho}$, hence $\Phi(x)\in
\mathbb{C}1_{N}$. However, since $\rho\circ\Phi=\varphi$, it follows that 
$\Phi(\cdot)=\varphi(\cdot)1_{N}$.

For the converse, suppose that $\mathfrak{N}$ is not ergodic. In this case,
the fixed point algebra $N^{ G }$ is nontrivial.
Note that $\sigma
_{t}^{\rho}\alpha_{g}=\alpha_{g}\sigma_{t}^{\rho}$ for all
$g\in G$  and $t\in\mathbb{R}$. It follows that $N^{ G }$ is
invariant under every $\sigma_{t}^{\rho}$, and therefore there is a unique faithful
normal conditional expectation $\mathbb{E}_{N^{ G }}$ from $N$ onto
$N^{ G }$ satisfying $\rho\circ\mathbb{E}_{N^{ G }}=\rho$ by a well--known Theorem of Takesaki \cite{Ta}. 
By uniqueness of $\mathbb{E}_{N^{G}}$ it follows
that $\mathbb{E}_{N^{ G }}\in J_{m}(\mathfrak{N},\mathfrak{M})$, where
$\mathfrak{M}=(N^{ G },\rho_{|N^{ G }},\alpha_{|N^{ G }}, G )$, 
which is a nontrivial identity system as $N^{ G }\neq \mathbb{C}1_{N}$. This
completes the argument.
\end{proof}

\begin{remark}
\emph{The first implication in Theorem \ref{Ergodic} can also be obtained by a standard
$2$--norm averaging technique which is a very popular tool in the theory of von Neumann algebras. 
If $\Phi\in J_{m}(\mathfrak{N},\mathfrak{M})$, where $\mathfrak{N}$ is ergodic and $\mathfrak{M}$ is an 
identity system, then for every $x$ in $N$, the unique element in $\overline{conv}^{||\cdot||_{\rho}}\{\alpha_{g}(x)\Omega_{\rho
}:g\in G \}$ of minimal norm corresponds to an element $\kappa(x)\in N^{ G }$; 
thus $\kappa(x)=\rho(x)1_{N}$ by ergodicity. 
Since $\mathfrak{M}$ is an identity system, $\Phi(\alpha
_{g}(x))=\Phi(x)$ for all $g\in G $ and this forces $\Phi(\kappa(x))=\Phi(x)$. It follows
that $\Phi(x)=\rho(x)1_{M}$ for all $x\in N$. Note that in all of these, the averaging argument is possible
since $\alpha_{g}$ preserves $\rho$ for all $g\in  G $.}
\end{remark}

\subsection{Primeness} Let $\mathfrak{M}$ be a system. Let $N\subset M$ be a von Neumann subalgebra that is globally invariant under the action of $G$ and as well as the modular automorphism group $\{\sigma_{t}^{\varphi}\}_{t\in \mathbb{R}}$. Then $\mathfrak{N}=(N, \varphi_{|N}, \beta_{|N}, G)$ is a subsystem of $\mathfrak{M}$. By \cite{Ta}, there is a faithful normal $\varphi$--preserving conditional expectation $\mathbb{E}_{N}$ from $M$ onto $N$. The system $\mathfrak{M}$ is said to be prime if it has no nontrivial subsystem.      

The following lemma is well--known to experts, but we prove it for convenience. 

\begin{lemma}\label{Fixed}

Let $M$ be a von Neumann algebra with a faithful normal state $\varphi$.
Let $\Phi:M\rightarrow M$ be a u.c.p. map such that $\varphi\circ \Phi=\varphi$. Then 
\begin{align*}
\{x\in M:\Phi(x)=x\}=\{x\in M: \Phi(xy)=x\Phi(y), \text{ } \Phi(yx)=\Phi(y)x \text{ }\forall y\in M\}
\end{align*}
is a von Neumann subalgebra of $M$. 
\end{lemma}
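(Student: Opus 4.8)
The plan is to prove the displayed set equality first and then to verify that the common set, which I will call $\mathcal{F}=\{x\in M:\Phi(x)=x\}$, is a von Neumann subalgebra. Write $\mathcal{D}$ for the set on the right-hand side. The inclusion $\mathcal{D}\subseteq\mathcal{F}$ is immediate: taking $y=1_{M}$ in the defining relations and using that $\Phi$ is unital gives $\Phi(x)=x\Phi(1_{M})=x$. The substance of the lemma is the reverse inclusion, and the idea is to show that every fixed point lies in the multiplicative domain of $\Phi$.

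Concretely, suppose $\Phi(x)=x$. Since $\Phi$ is positive and unital it is $*$-preserving, so $\Phi(x^{*})=\Phi(x)^{*}=x^{*}$ and $x^{*}\in\mathcal{F}$ as well. By Kadison's inequality (as in the computation of Eq. \eqref{T_phi}), $\Phi(x^{*}x)\geq\Phi(x)^{*}\Phi(x)=x^{*}x$, so $\Phi(x^{*}x)-x^{*}x$ is a positive element of $M$. Applying $\varphi$ and using $\varphi\circ\Phi=\varphi$ gives $\varphi(\Phi(x^{*}x)-x^{*}x)=0$; since $\varphi$ is faithful this forces $\Phi(x^{*}x)=x^{*}x=\Phi(x)^{*}\Phi(x)$, and the same computation applied to $x^{*}$ yields $\Phi(xx^{*})=\Phi(x)\Phi(x)^{*}$. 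Thus $x$ lies in the multiplicative domain of $\Phi$, and Choi's multiplicative domain theorem then gives $\Phi(xy)=\Phi(x)\Phi(y)=x\Phi(y)$ and $\Phi(yx)=\Phi(y)\Phi(x)=\Phi(y)x$ for all $y\in M$, i.e. $x\in\mathcal{D}$. This establishes $\mathcal{F}=\mathcal{D}$.

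For the algebraic structure, $\mathcal{F}$ is plainly a linear subspace containing $1_{M}$, and it is $*$-closed by the remark above. It is closed under multiplication: if $x_{1},x_{2}\in\mathcal{F}=\mathcal{D}$, applying the $\mathcal{D}$-relation for $x_{1}$ with $y=x_{2}$ gives $\Phi(x_{1}x_{2})=x_{1}\Phi(x_{2})=x_{1}x_{2}$, so $x_{1}x_{2}\in\mathcal{F}$. It remains to check that $\mathcal{F}$ is $\sigma$-weakly closed. Here I would pass to the GNS picture: since $\varphi$ is faithful, $\Omega_{\varphi}$ is separating, so $\Phi(x)\Omega_{\varphi}=x\Omega_{\varphi}$ holds if and only if $x\in\mathcal{F}$, whence $\mathcal{F}=\{x\in M:x\Omega_{\varphi}\in\mathcal{H}_{0}\}$, where $\mathcal{H}_{0}$ is the closed fixed-point subspace of the contraction $T_{\Phi}$ on $L^{2}(M,\varphi)$. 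The map $x\mapsto x\Omega_{\varphi}$ is $\sigma$-weak-to-weak continuous on bounded sets (the functionals $x\mapsto\langle x\Omega_{\varphi},y\Omega_{\varphi}\rangle_{\varphi}=\varphi(y^{*}x)$ are normal, and a density-plus-boundedness estimate extends this to all of $L^{2}(M,\varphi)$), so any bounded net in $\mathcal{F}$ converging $\sigma$-weakly has its limit in $\mathcal{F}$; by the Krein--Smulian theorem $\mathcal{F}$ is $\sigma$-weakly closed, hence a von Neumann subalgebra.

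The main obstacle is the inclusion $\mathcal{F}\subseteq\mathcal{D}$, and specifically the step that forces equality in Kadison's inequality: it is precisely the faithfulness of $\varphi$ together with the invariance $\varphi\circ\Phi=\varphi$ that upgrades $\Phi(x^{*}x)\geq\Phi(x)^{*}\Phi(x)$ to an equality on fixed points, placing them in the multiplicative domain. The $\sigma$-weak closedness is a secondary technical point, and I note that it does not require $\Phi$ to be normal; it is the normality of $\varphi$ that makes the GNS argument go through.
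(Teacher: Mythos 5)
Your proposal is correct, and its mathematical core coincides with the paper's proof: both establish the inclusion of the fixed-point set into the bimodule-relation set by combining Kadison's inequality $\Phi(x)^{\ast}\Phi(x)\leq\Phi(x^{\ast}x)$ with the invariance $\varphi\circ\Phi=\varphi$ and faithfulness of $\varphi$ to force $\Phi(x^{\ast}x)=\Phi(x)^{\ast}\Phi(x)$, and then invoke Choi's multiplicative domain theorem. Where you diverge is in the verification that the common set is a von Neumann subalgebra. The paper's route is shorter: it first observes that the hypothesis forces $\Phi$ itself to be normal, from which both sets are immediately $w^{\ast}$-closed (the right-hand set being a $w^{\ast}$-closed $C^{\ast}$-subalgebra, and the fixed-point set a $w^{\ast}$-closed subspace), so the set equality finishes the proof. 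You instead avoid normality of $\Phi$ entirely, identifying the fixed points of $\Phi$ with the elements $x$ for which $x\Omega_{\varphi}$ lies in the (norm-closed) fixed space of the contraction $T_{\Phi}$, and then upgrading closedness of bounded parts to $\sigma$-weak closedness via Krein--Smulian; this buys a proof in which only normality of $\varphi$ is used, at the cost of an extra functional-analytic ingredient. Two smaller points in your favor: you explicitly note that $x^{\ast}$ is also fixed and run the equality argument on $xx^{\ast}$ as well, which is needed to extract \emph{both} module relations from Choi's theorem (the paper leaves this step implicit), and you verify closure under multiplication directly from the established relations rather than relying on the right-hand set being a $C^{\ast}$-algebra.
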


\begin{proof}
The hypothesis guarantees that $\Phi$ is normal. Note that the set on the right hand side of the statement is a $w^{*}$--closed $C^{\ast}$--subalgebra of $M$  which is contained in the set of fixed points of $\Phi$; while the set on the left hand side of the statement is a $w^{*}$--closed subspace of $M$. 

Let $x\in M$ be such that $\Phi(x)=x$. Then, 
$x^{*}x=\Phi(x)^{*}\Phi(x) \leq \Phi(x^{*}x)$. Thus, $\varphi(\Phi(x^{*}x)-x^{*}x)=0$ forces that $\Phi(x^{*}x)=x^{*}x=\Phi(x)^{*}\Phi(x)$. Consequently by Choi's theorem $($Theorem 3.1 \cite{Ch}$)$, one has 
$\Phi(xy)=\Phi(x)\Phi(y)=x\Phi(y)$ and $\Phi(yx)=\Phi(y)\Phi(x)=\Phi(y)x$ for all $y\in M$. 
\end{proof}

The following is a generalization of an unpublished result of K. Berg in the mid--seventies, a proof of which can be found in \cite{Go}. Note that the use of a u.c.p. map in the proof below reduces the technical intricacy. The final equivalence claim in the theorem is a direct consequence of the definition of ergodicity of positive maps on p. 7 of \cite{NSL}, which we mention for reference but do not discuss in detail here.  

\begin{theorem}\label{Berg}
A $W^{*}$--dynamical system $\mathfrak{M}$ is prime if and only if 
for each $id\neq\Phi\in J_{m}(\mathfrak{M},\mathfrak{M})$, $\Phi(x)=x$ implies $x\in \mathbb{C}1_{M}$. Equivalently, $\mathfrak{M}$ is prime if and only if each $id\neq\Phi\in J_{m}(\mathfrak{M},\mathfrak{M})$ is ergodic with respect to $\varphi$. 
\end{theorem}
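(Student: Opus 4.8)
The plan is to deduce the stated equivalence from Lemma \ref{Fixed}, which realizes the fixed-point set of a state-preserving u.c.p.\ map as a von Neumann subalgebra, together with the dictionary between $G$- and modular-invariant subalgebras and subsystems, and the conditional expectation furnished by Takesaki's theorem \cite{Ta}. The final equivalence with ergodicity of $\Phi$ is immediate from the definition on p.~7 of \cite{NSL}, so I would only establish the first biconditional.

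For the forward direction, suppose $\mathfrak{M}$ is prime and let $id\neq\Phi\in J_{m}(\mathfrak{M},\mathfrak{M})$. By Lemma \ref{Fixed} the fixed-point set $F=\{x\in M:\Phi(x)=x\}$ is a von Neumann subalgebra of $M$. I would then observe that $F$ is globally invariant under $G$ and under the modular group: if $\Phi(x)=x$, then equivariance gives $\Phi(\beta_{g}(x))=\beta_{g}(\Phi(x))=\beta_{g}(x)$ and the Markov property gives $\Phi(\sigma_{t}^{\varphi}(x))=\sigma_{t}^{\varphi}(\Phi(x))=\sigma_{t}^{\varphi}(x)$, so $\beta_{g}(F)\subseteq F$ and $\sigma_{t}^{\varphi}(F)\subseteq F$. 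Hence $(F,\varphi_{|F},\beta_{|F},G)$ is a subsystem of $\mathfrak{M}$. Primeness forces $F=\mathbb{C}1_{M}$ or $F=M$; since $F=M$ would mean $\Phi=id$, contrary to hypothesis, we conclude $F=\mathbb{C}1_{M}$, which is exactly the desired conclusion.

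For the converse I would argue contrapositively. Suppose $\mathfrak{M}$ is not prime, so there is a nontrivial subsystem, i.e.\ a $G$- and modular-invariant von Neumann subalgebra $N$ with $\mathbb{C}1_{M}\neq N\neq M$. By \cite{Ta} there is a unique faithful normal $\varphi$-preserving conditional expectation $\mathbb{E}_{N}:M\rightarrow N$. I would verify that $\mathbb{E}_{N}\in J_{m}(\mathfrak{M},\mathfrak{M})$: it is u.c.p.\ and $\varphi$-preserving by construction, and its equivariance and Markov property follow from uniqueness, since for each $g$ (resp.\ $t$) the map $\beta_{g}\circ\mathbb{E}_{N}\circ\beta_{g^{-1}}$ (resp.\ $\sigma_{t}^{\varphi}\circ\mathbb{E}_{N}\circ\sigma_{-t}^{\varphi}$) is again a faithful normal $\varphi$-preserving conditional expectation onto $\beta_{g}(N)=N$ (resp.\ $\sigma_{t}^{\varphi}(N)=N$), hence coincides with $\mathbb{E}_{N}$. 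Because $N\neq M$ we have $\mathbb{E}_{N}\neq id$, while the fixed-point set of $\mathbb{E}_{N}$ is precisely $N\neq\mathbb{C}1_{M}$. This exhibits a nonidentity self-joining with nonscalar fixed points, contradicting the hypothesis and thereby proving that the fixed-point condition forces primeness.

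I expect the main obstacle to be the verification that $\mathbb{E}_{N}$ genuinely lies in $J_{m}(\mathfrak{M},\mathfrak{M})$, that is, that it is simultaneously equivariant and Markov. This is exactly where the uniqueness clause of Takesaki's theorem does the work: invariance of $N$ under $G$ and under the modular flow allows one to transport $\mathbb{E}_{N}$ by these symmetries and recognize the transported map as a $\varphi$-preserving conditional expectation onto the same algebra $N$, so that uniqueness yields the required intertwining relations. The remaining steps reduce cleanly to Lemma \ref{Fixed} and the definition of subsystem given in \S\ref{Erg}.
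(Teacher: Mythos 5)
Your proposal is correct and follows essentially the same route as the paper: Lemma \ref{Fixed} plus $G$- and modular-invariance of the fixed-point algebra for the forward direction, and the $\varphi$-preserving conditional expectation onto a nontrivial subsystem for the converse. The only cosmetic difference is that you verify $\mathbb{E}_{N}\in J_{m}(\mathfrak{M},\mathfrak{M})$ directly from the uniqueness clause of Takesaki's theorem, whereas the paper cites its Theorem \ref{ConditionalExpectationisMarkov}; the underlying argument is the same.
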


\begin{proof}
First assume $\mathfrak{M}$ is prime. Suppose there exists $x\in M$ such that $\Phi(x)=x$ and $x\neq \varphi(x)1_{M}$. It follows from Lemma 
\ref{Fixed} that $M^{\Phi}=\{y\in M: \Phi(y)=y\}\neq \mathbb{C}1_{M}$ is a von Neumann subalgebra of $M$. Clearly,  $M^{\Phi}$ is $\sigma_{t}^{\varphi}$--invariant for all $t\in \mathbb{R}$ and $G$--invariant as well. Thus, $\mathfrak{N}=(M^{\Phi},\varphi_{|M^{\Phi}},\beta,G)$ is a subsystem of $\mathfrak{M}$. It follows that $M^{\Phi}=M$, equivalently $\Phi=id$. 

Conversely, suppose that $\mathfrak{N}$ is a subsystem of $\mathfrak{M}$ with $N\subseteq M$.  Then $\mathbb{E}_{N}\in J_{m}(\mathfrak{M},\mathfrak{M})$ by Theorem \ref{ConditionalExpectationisMarkov}. If $N\neq \mathbb{C}1_{M}$, then there exists $x\in M$ such $x\not\in \mathbb{C}1_{M}$ 
and $\mathbb{E}_{N}(x)=x$. But by the hypothesis, this is only possible if $\mathbb{E}_{N}=id$, whence $N=M$. Thus, $\mathfrak{M}$ is prime. 
\end{proof}

\subsection{Weak Mixing}

For general facts about weakly mixing actions, we refer the reader to the survey article \cite{BeGo} and to \cite{KLP}. 
Weak mixing describes a certain asymptotic independence of the system: the tendency of a centered observable under a subsequence of iterates of symmetries to become approximately orthogonal to any other observable \cite{BeRo}. This phenomenon for $\mathbb{Z}$--actions is also characterized by the abundance of
weakly wandering vectors, reflecting the asymptotic independence \cite{Kr}. 

\begin{definition}\label{WM}
$(i)$ A system $\mathfrak{N}$ is said to be \textbf{weakly mixing} if for every
$\varepsilon>0$ and every finite subset $F\subseteq N$ such that $\rho(x)=0$ for
all $x\in F$, there exists $g\in G $ such that $|\rho(\alpha
_{g}(x)y)|<\varepsilon$ for all $x,y\in F$.\\
$(ii)$ A system $\mathfrak{N}$ is \textbf{compact} if the orbit $\alpha_{ G 
}(x)\Omega_{\rho}$ of $x\Omega_{\rho}$ is totally bounded in $\norm{\cdot}_{\rho}$ for all $x\in
N$.
\end{definition}

In direct analogy with the classical case (cf. \cite{KeLi}), the unitary representation $g\mapsto U_{g}$ of $ G $ on $L^{2}(N,\rho)\ominus\mathbb{C}\Omega_{\rho}$ associated to a system $\mathfrak{N}=(N,\rho,\alpha,G)$ is weakly mixing if and only if it has no finite--dimensional subrepresentation.  In particular, then, the latter condition on the associated representation implies that the original system $\mathfrak{N}$ is weakly mixing.\footnote{We point out that the definition of a weakly mixing
system appearing in Def. 2.5 of \cite{Du2} is appropriate only in the case when the acting group is abelian, since a non-abelian group may admit irreducible, finite-dimensional (hence, non--weakly mixing) representations of dimension greater than one.  }

\begin{definition}\label{EigenFunction}
Let $\mathfrak{N}$ be a system. An element $0\neq x\in N$ is an
\textbf{eigenfunction} for $\pi_{U}$ if there exists a continuous character $\chi
: G \rightarrow S^{1}$ such that $U_{g}(x\Omega_{\rho})=\langle
g,\chi\rangle x\Omega_{\rho}$ for all $g\in G $. A nonzero vector $\xi \in L^{2}(N,\rho)$ such that there exists a continuous character $\chi
: G \rightarrow S^{1}$ satisfying $U_{g}\xi=\langle
g,\chi\rangle \xi$ for all $g\in G $ will be called an \textbf{eigenvector} for $\pi_{U}$.

\end{definition}
Since $U_{g}\Omega_{\rho}=\Omega_{\rho}$ for all $g\in G$, it follows that if $x\in N$ is an eigenfunction and the associated character is non trivial then $\rho(x)=0$. 
Moreover, both $x^{*}x$ and $xx^{*}$ lie in $N^{ G }$, and therefore if the system is ergodic then $x$ is a scalar multiple of a unitary. It is a standard fact that the von Neumann subalgebra $N_{0}$ of $N$ 
generated by  eigenfunctions of $\pi_{U}$ is a compact subsystem 
of $\mathfrak{N}$ $($which is nontrivial when $\mathfrak{N}$ is not weakly mixing 
and the acting group is abelian, cf. Theorem 2.1 \cite{NeSt}$)$. Indeed, as
$\alpha_{g}\sigma_{\rho}^{t}=\sigma_{\rho}^{t}\alpha_{g}$ for $g\in  G $ and $t\in\mathbb{R}$, $N_{0}$ is invariant under the action of the modular automorphism group, and thus by \cite{Ta}
there is a unique $\rho$--preserving faithful normal conditional expectation $\mathbb{E}_{N_{0}}$ onto $N_{0}$ which is 
$ G $--equivariant by the uniqueness of $\mathbb{E}_{N_{0}}$. Thus, the adjoint of the map 
$\mathbb{E}_{N_{0}}$ $($which is the inclusion map by Theorem \ref{ConditionalExpectationisMarkov} above$)$ is also $ G $--equivariant by Theorem 
\ref{AdjointisEquivariant} and satisfies all of the properties of a subsystem $($see Definition \ref{Subsystem} and
Theorem \ref{ConditionalExpectationisMarkov} for details$)$. The total boundedness of orbits 
follows from straightfoward density arguments.

Note that if $\mathfrak{N}$ is  a compact system, then $vN(\{U_{g}:g\in  G \})\subseteq \textbf{B}(L^{2}(N,\rho))$ 
is atomic and is in fact a direct sum of matrix algebras. Indeed, first note that the unit ball of $\textbf{B}(L^{2}(N,\rho))$ 
equipped with the strong operator topology is a complete metric space, since $L^{2}(N,\rho)$ is separable. Using the separability of $L^{2}(N,\rho)$, a routine diagonalization argument yields that the strong--operator closure of $\{U_{g}:g\in  G \}$ is sequentially compact. 
It follows that $K=\overline{\{U_{g}:g\in  G \}}^{s.o.t.}$ is a compact group of unitaries. By the Peter--Weyl theorem,
the identity representation of $K$ on $L^{2}(N,\rho)$ is a direct sum of irreducible subrepresentations,
each of which is finite--dimensional. On the other hand, note that if $\mathfrak{N}$ were weakly mixing, 
then the representation $\pi_{U}$ could not have any finite--dimensional direct summand on the orthocomplement of 
$\mathbb{C}\Omega_{\rho}$. In fact, we have the following result.

\begin{theorem}\label{WmDisjointCompact}
A weakly mixing system is disjoint from any compact system.
\end{theorem}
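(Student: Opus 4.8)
The plan is to translate the joining into an equivariant Markov map via Theorem \ref{JoiningDictionary} and show that this map must be trivial. So let $\mathfrak{N}$ be weakly mixing, let $\mathfrak{M}$ be compact, and fix $\Phi\in J_{m}(\mathfrak{N},\mathfrak{M})$; by the dictionary and the definition of disjointness it suffices to prove $\Phi=\rho(\cdot)1_{M}$. Writing $\mathcal{H}^{0}=L^{2}(N,\rho)\ominus\mathbb{C}\Omega_{\rho}$, I claim this reduces to showing that $T_{\Phi}$ annihilates $\mathcal{H}^{0}$: indeed, for any $x\in N$ we then have $\Phi(x)\Omega_{\varphi}=T_{\Phi}(x\Omega_{\rho})=T_{\Phi}((x-\rho(x)1_{N})\Omega_{\rho})+\rho(x)\Omega_{\varphi}=\rho(x)\Omega_{\varphi}$, and since $\Omega_{\varphi}$ is separating for $M$ this forces $\Phi(x)=\rho(x)1_{M}$.

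The representation--theoretic mechanism is the mutual ``disjointness'' of a weakly mixing and a compact representation. By Proposition \ref{Prop:Intertwine}, $T_{\Phi}$ intertwines $\pi_{U}$ with $\pi_{V}$; taking adjoints of the unitaries $U_{g},V_{g}$ (equivalently, using Eq. \eqref{adjointT_phi} together with Theorem \ref{AdjointisEquivariant}), the operator $T_{\Phi}^{*}=T_{\Phi^{*}}$ intertwines $\pi_{V}$ with $\pi_{U}$. Since $\mathfrak{M}$ is compact, the discussion preceding the statement shows that $K=\overline{\{V_{g}:g\in G\}}^{s.o.t.}$ is a compact group and that Peter--Weyl decomposes $L^{2}(M,\varphi)$ as an orthogonal direct sum of finite--dimensional $\pi_{V}$--invariant subspaces $W_{i}$. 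For each $i$, the image $T_{\Phi}^{*}(W_{i})$ is then a finite--dimensional subspace of $L^{2}(N,\rho)$ that is invariant under $\pi_{U}$. Because $\Omega_{\rho}$ is fixed by every $U_{g}$, the projection onto $\mathbb{C}\Omega_{\rho}$ commutes with $\pi_{U}$, so the projection of $T_{\Phi}^{*}(W_{i})$ into $\mathcal{H}^{0}$ is again a finite--dimensional $\pi_{U}$--invariant subspace, now lying inside $\mathcal{H}^{0}$. Weak mixing of $\mathfrak{N}$ means that $\pi_{U}|_{\mathcal{H}^{0}}$ has no nonzero finite--dimensional subrepresentation (as recalled before the statement, cf. \cite{KeLi}), so this projection vanishes and hence $T_{\Phi}^{*}(W_{i})\subseteq\mathbb{C}\Omega_{\rho}$.

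Finally, since the $W_{i}$ span a dense subspace of $L^{2}(M,\varphi)$ and $T_{\Phi}^{*}$ is bounded, the range of $T_{\Phi}^{*}$ lies in $\mathbb{C}\Omega_{\rho}$. Passing to orthogonal complements, $\ker T_{\Phi}=(\mathrm{ran}\,T_{\Phi}^{*})^{\perp}\supseteq(\mathbb{C}\Omega_{\rho})^{\perp}=\mathcal{H}^{0}$, which is the required vanishing $T_{\Phi}|_{\mathcal{H}^{0}}=0$; this completes the reduction of the first paragraph, and disjointness of $\mathfrak{N}$ and $\mathfrak{M}$ follows. The step I expect to be most delicate is the one in the second paragraph: correctly matching the two spectral pictures, that is, transporting the finite--dimensional invariant subspaces produced by compactness of $\mathfrak{M}$ through the intertwiner $T_{\Phi}^{*}$ and confirming that, after stripping off the vacuum line $\mathbb{C}\Omega_{\rho}$, they land in the region of $L^{2}(N,\rho)$ controlled by weak mixing; the remaining manipulations are routine.
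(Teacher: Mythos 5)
Your proof is correct, and it runs on the same engine as the paper's own argument: Proposition \ref{Prop:Intertwine} converts the equivariant Markov map into an intertwiner of the two Koopman-type representations, compactness supplies (via the s.o.t.-closure of the unitaries being a compact group and Peter--Weyl) finite-dimensional invariant subspaces, and weak mixing --- in the form ``no finite-dimensional subrepresentation on the orthocomplement of the vacuum,'' which both proofs take from the discussion preceding the theorem --- kills their transported images. The execution, however, differs in ways worth recording. The paper orients the Markov map from the compact system into the weakly mixing one and transports subspaces using the polar part $W$ of $T_{\Phi}$ (clause $(i)^{\circ}$ of Proposition \ref{Prop:Intertwine}), arguing by contradiction: a vector orthogonal to $\ker W$ but outside $\mathbb{C}\Omega_{\rho}$ generates an orbit closure decomposing into finite-dimensional reducing subspaces, and $W$, being isometric on $(\ker W)^{\perp}$, carries this decomposition to a nontrivial invariant subspace on the weakly mixing side. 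You keep $\Phi$ oriented from the weakly mixing system to the compact one and use the adjoint $T_{\Phi}^{*}=T_{\Phi^{*}}$ (Eq. \eqref{adjointT_phi}, Theorem \ref{AdjointisEquivariant}) as the intertwiner; since the linear image of a finite-dimensional invariant subspace under any bounded intertwiner is again finite-dimensional and invariant, no polar decomposition is needed, and the inclusion $\mathrm{ran}\,T_{\Phi}^{*}\subseteq\mathbb{C}\Omega_{\rho}$ together with $\ker T_{\Phi}=(\mathrm{ran}\,T_{\Phi}^{*})^{\perp}$ yields a direct, contradiction-free conclusion. Your explicit step of projecting off the vacuum line (legitimate because that projection commutes with $\pi_{U}$, as $\Omega_{\rho}$ is fixed) also makes precise a point the paper's proof passes over quickly: transported subspaces only violate weak mixing after their component along the vacuum is stripped away. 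What the paper's route buys is that $W$ automatically keeps the transported subspace nontrivial, which a contradiction argument requires; what your route buys is economy --- only part $(i)$ of Proposition \ref{Prop:Intertwine} is used, and the orientation issue is immaterial anyway since disjointness is symmetric by Theorem \ref{AdjointisEquivariant}.
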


\begin{proof}
By Theorem \ref{CharacterizeJoins} and \ref{CharacterizeReverse}, it suffices to
work with u.c.p. maps. Let $\mathfrak{M}$ be a weakly mixing system and suppose $\mathfrak{N}$ is compact. If $\Phi\in J_{m}(\mathfrak{N},\mathfrak{M})$, then by $(i)^{\circ}$ of
Proposition \ref{Prop:Intertwine}, it follows that 
\begin{align*}
W\pi_{U}=\pi_{V}W, 
\end{align*}
where $W$ is the polar part of $T_{\Phi}$. Note that $Ker(W)^{\perp}=Ker(T_{\Phi})^{\perp}$ and $Ran(W)
=\overline{Ran(T_{\Phi})}$ are reducing subspaces of $\pi_{U}$ and $\pi_{V}$ respectively. 

If $Ker(W)\neq L^{2}(N,\rho)\ominus\mathbb{C}\Omega_{\rho}$ then there is a $\xi\notin \mathbb{C}\Omega_{\rho}$ orthogonal to $Ker(W)$.  By compactness of $\mathfrak{N}$ and the remarks above, 
 $\overline{\pi_{U}(G)\xi}$ decomposes as a direct sum of finite dimensional reducing subspaces for $\pi_{U}$. Therefore the nontrivial subspace
$W\overline{\pi_{U}(G)\xi}=\overline{\pi_{V}(G)W\xi}$ of $L^2(M,\varphi)$ decomposes as a direct sum of finite-- dimensional reducing subspaces for $\pi_{V}$, contradicting the fact that $\mathfrak{M}$ was weakly mixing.
Consequently, $\Phi(\cdot)=\rho(\cdot)1_{M}$.
\end{proof}

\subsection{Polar Decomposition}\label{PD}

In order to characterize weakly mixing systems via joinings, we need some preparatory results of independent interest in the theory of von Neumann algebras. The main result in this subsection is Theorem \ref{InvaraintSubspaceInsideCentralizer}, which says that finite--dimensional invariant subspaces of the canonical unitary representation associated to a state--preserving ergodic action of a separable locally compact group on a von Neumann algebra $M$ lie inside the subspace $M^{\varphi}\Omega_{\varphi}$ generated by the centralizer of the state. We exploit a certain rigidity in the polar decomposition of operators affiliated to a von Neumann algebra together with a rigidity of the spectral decomposition of such operators. Together with the powerful main result of \cite{HLS}, this locates the finite--dimensional invariant subspaces inside the centralizer.   
We need some preparation, details of which can be found in \cite{Str}. 

Note that for $\zeta\in \mathfrak{D}(S_{\varphi})$, the operator $L_{\zeta}^{0}$ with $\mathfrak{D}(L_{\zeta}^{0})=M^{\prime}\Omega_{\varphi}$ and (densely) defined by $L_{\zeta}^{0}(x^{\prime}\Omega_{\varphi})=x^{\prime}\zeta$ for all $x^{\prime}\in M^{\prime}$, is closable and affiliated to $M$. Let $L_{\zeta}$ denote the closure of $L_{\zeta}^{0}$, and let $L_{\zeta}=v|L_{\zeta}|$ denote the polar decomposition of $L_{\zeta}$. Thus $|L_{\zeta}|$ is positive, self--adjoint and affiliated to $M$ and $v\in M$. 

Let $P_{S_{\varphi}}= \{T\Omega_{\varphi}:  T \text{ is positive, self--adjoint and affiliated to } M \text{ and } \Omega_{\varphi}\in \mathfrak{D}(T)\}$. Note that $\Omega_{\varphi}\in \mathfrak{D}(L_{\zeta})=\mathfrak{D}(\abs{L_{\zeta}})$. Then, if  $\eta=\abs{L_{\zeta}}\Omega_{\varphi}$, then $\eta\in P_{S_{\varphi}}$ and $\zeta = v \eta$. Note also that $\eta\in P_{S_{\varphi}}=\overline{M_{+}\Omega_{\varphi}}^{\norm{\cdot}_{\varphi}}$. Thus, $L_{\eta}^{0}$ $($defined as above$)$ is positive $($i.e., $\langle x^{\prime}\eta,x^{\prime}\Omega_{\varphi}\rangle_{\varphi}\geq 0$, for all $x^{\prime}\in M^{\prime})$. Clearly, $L_{\zeta}^{0}= v L_{\eta}^{0}$, thus $\mathfrak{D}(L_{\zeta}^{0})=\mathfrak{D}(L_{\eta}^{0})$. Then $v^{*} L_{\zeta}^{0}$ is closable with $\overline{v^{*} L_{\zeta}^{0}}=v^{*}L_{\zeta}$. So $L_{\eta}^{0}$ is closable and $L_{\zeta}=vL_{\eta}$, where $L_{\eta}$ denotes the closure of $L_{\eta}^{0}$. This shows that $L_{\eta}=v^{*}L_{\zeta}=\abs{L_{\zeta}}$, so $L_{\eta}$ is positive and self--adjoint. Finally, $L_{\zeta}=v L_{v^{*}\zeta}$ denotes the polar decomposition of $L_{\zeta}$. 

Let $n\in \mathbb{N}$, and let $\widetilde{M}=M\otimes M_{n}(\mathbb{C})$. 
Consider the faithful normal state $\widetilde{\varphi}=\varphi\otimes tr$ on $\widetilde{M}$, where $tr$ denotes the normalized trace on $M_{n}(\mathbb{C})$. The GNS Hilbert space $L^{2}(\widetilde{M},\widetilde{\varphi})$ of $\widetilde{M}$ with respect to  $\widetilde{\varphi}$ is $L^{2}(M,\varphi)\otimes M_{n}(\mathbb{C})$ in which $\Omega_{\widetilde{\varphi}}$ will be the standard vacuum vector. Let $1_{n}$ be the identity operator on the Hilbert space $M_{n}(\mathbb{C})$. Vectors in $L^{2}(M,\varphi)\otimes M_{n}(\mathbb{C})$ are viewed as $n\times n$ matrices with elements from $L^{2}(M,\varphi)$ and a generic element in $L^{2}(M,\varphi)\otimes M_{n}(\mathbb{C})$ will be written as $[\zeta_{ij}]$ with $\zeta_{ij}\in L^{2}(M,\varphi)$ for $1\leq i,j\leq n$. The obvious analogous notational convention is adopted for elements in $\widetilde{M}$. 

The associated operators in the modular theory of $\widetilde{M}$ with respect to 
$\widetilde{\varphi}$ are given by $S_{\widetilde{\varphi}}=S_{\varphi}\otimes J_{tr}$, $J_{\widetilde{\varphi}}=J_{\varphi}\otimes J_{tr}$, $\Delta_{\widetilde{\varphi}}=\Delta_{\varphi}\otimes 1_{n}$ and the modular automorphisms are given by $\sigma_{t}^{\widetilde{\varphi}} =\sigma_{t}^{\varphi}\otimes 1$ for all $t\in\mathbb{R}$. Thus, $\Delta^{it}_{\widetilde{\varphi}}[\zeta_{ij}] = [\Delta^{it}_{\varphi}\zeta_{ij}]$, for all $t\in \mathbb{R}$ and all $\zeta_{ij}\in L^{2}(M,\varphi)$ for $1\leq i,j\leq n$. Also note that, since $S_{\widetilde{\varphi}}=S_{\varphi}\otimes J_{tr}$, it follows that $[\zeta_{ij}] \in \mathfrak{D}(S_{\widetilde{\varphi}})$ if and only if $\zeta_{ij}\in \mathfrak{D}(S_{\varphi})$ for all $1\leq i,j\leq n$. 

Consequently, if $[\zeta_{ij}] \in \mathfrak{D}(S_{\widetilde{\varphi}})$, then the densely defined operator $L_{[\zeta_{ij}]}^{0}$, defined as before, is closable and its closure $L_{[\zeta_{ij}]}$ admits a polar decomposition $L_{[\zeta_{ij}]}= [u_{ij}]|L_{[\zeta_{ij}]}|$, where $[u_{ij}]\in \widetilde{M}$ is a partial isometry and $|L_{[\zeta_{ij}]}|$ is positve, self--adjoint and affiliated to $\widetilde{M}$. Thus, as before, we also have 
\begin{align*}
&[\zeta_{ij}]=[u_{ij}][\eta_{ij}], \text{ where }[\eta_{ij}]=|L_{[\zeta_{ij}]}|\Omega_{\widetilde{\varphi}} \text{ and }\\
&L_{[\zeta_{ij}]} = [u_{ij}] L_{[u_{ij}]^{*}[\zeta_{ij}]}.
\end{align*}

The following lemma, which is crucial for our purposes, is probably well--known to experts but we lack a reference, and therefore include a proof for the sake of completeness.

\begin{lemma}\label{polardecomposition}
Let $M$ be a von Neumann algebra equipped with a faithful normal state $\varphi$. Let $\alpha\in Aut(M,\varphi)$. Let $V_{\alpha}\in \mathbf{B}(L^{2}(M,\varphi))$ denote the unique unitary such that $(Ad\text{ } V_{\alpha})(x)=\alpha(x)$ for all $x\in M$. Let $\zeta \in \mathfrak{D}(S_{\varphi})$. Then,
$V_{\alpha}\zeta\in  \mathfrak{D}(S_{\varphi})$, and if $L_{\zeta}=vL_{\eta}$ denotes the polar decomposition of $L_{\zeta}$, then 
\begin{align*}
L_{V_{\alpha}\zeta}=\alpha(v) L_{V_{\alpha}\eta}
\end{align*}
is the polar decomposition of $L_{V_{\alpha}\zeta}$. 
\end{lemma}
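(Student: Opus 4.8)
The plan is to reduce the entire statement to a single conjugation identity, namely that for every $\xi\in\mathfrak{D}(S_{\varphi})$ one has $L_{V_{\alpha}\xi}=V_{\alpha}L_{\xi}V_{\alpha}^{*}$, and then to read off the asserted polar decomposition directly from the one for $L_{\zeta}$. First I would record the structural properties of the canonical implementation $V_{\alpha}$ that make everything work: $V_{\alpha}$ fixes $\Omega_{\varphi}$, conjugation by $V_{\alpha}$ implements $\alpha$ on $M$ (so $V_{\alpha}MV_{\alpha}^{*}=M$ and, taking commutants, $V_{\alpha}M^{\prime}V_{\alpha}^{*}=M^{\prime}$), and $S_{\varphi}=V_{\alpha}S_{\varphi}V_{\alpha}^{*}$, as established for any state-preserving automorphism in the preliminaries. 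The last identity gives $\mathfrak{D}(S_{\varphi})=V_{\alpha}\mathfrak{D}(S_{\varphi})$, which is exactly the first assertion $V_{\alpha}\zeta\in\mathfrak{D}(S_{\varphi})$ (and likewise $V_{\alpha}\eta\in\mathfrak{D}(S_{\varphi})$, so that $L_{V_{\alpha}\eta}$ is meaningful).

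For the conjugation identity I would work on the core $M^{\prime}\Omega_{\varphi}$ of $L_{\xi}^{0}$. Since $V_{\alpha}$ normalizes $M^{\prime}$ and fixes $\Omega_{\varphi}$, it maps $M^{\prime}\Omega_{\varphi}$ onto itself, so $V_{\alpha}L_{\xi}^{0}V_{\alpha}^{*}$ and $L_{V_{\alpha}\xi}^{0}$ share the same dense domain. A direct computation, writing $y^{\prime}=V_{\alpha}^{*}x^{\prime}V_{\alpha}\in M^{\prime}$ for $x^{\prime}\in M^{\prime}$, gives $V_{\alpha}L_{\xi}^{0}V_{\alpha}^{*}(x^{\prime}\Omega_{\varphi})=V_{\alpha}(y^{\prime}\xi)=x^{\prime}(V_{\alpha}\xi)=L_{V_{\alpha}\xi}^{0}(x^{\prime}\Omega_{\varphi})$, so the two closable operators agree on a common core; passing to closures under the unitary conjugation yields $L_{V_{\alpha}\xi}=V_{\alpha}L_{\xi}V_{\alpha}^{*}$.

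Applying this with $\xi=\zeta$ and inserting the polar decomposition $L_{\zeta}=v\abs{L_{\zeta}}=vL_{\eta}$ gives
\begin{align*}
L_{V_{\alpha}\zeta}=V_{\alpha}L_{\zeta}V_{\alpha}^{*}=(V_{\alpha}vV_{\alpha}^{*})(V_{\alpha}\abs{L_{\zeta}}V_{\alpha}^{*})=\alpha(v)(V_{\alpha}L_{\eta}V_{\alpha}^{*})=\alpha(v)L_{V_{\alpha}\eta},
\end{align*}
where I have used $V_{\alpha}vV_{\alpha}^{*}=\alpha(v)$ for $v\in M$ and the conjugation identity once more for $\eta$ (recalling $\abs{L_{\zeta}}=L_{\eta}$). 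Since conjugation by the unitary $V_{\alpha}$ preserves positivity, self-adjointness and affiliation to $M$, the factor $V_{\alpha}\abs{L_{\zeta}}V_{\alpha}^{*}=L_{V_{\alpha}\eta}$ is positive, self-adjoint and affiliated to $M$, while $\alpha(v)\in M$ is a partial isometry whose kernel is $V_{\alpha}\ker v=\ker(V_{\alpha}\abs{L_{\zeta}}V_{\alpha}^{*})$. By uniqueness of the polar decomposition this exhibits $\alpha(v)L_{V_{\alpha}\eta}$ as the polar decomposition of $L_{V_{\alpha}\zeta}$, as claimed.

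I expect the only genuine care to be in the second step: verifying that $M^{\prime}\Omega_{\varphi}$ is a common core for both operators and then justifying that an identity of closable operators on a common core passes to their closures under unitary conjugation. The remaining bookkeeping—that $\alpha(v)$ has the correct initial and final projections and that positivity and affiliation to $M$ survive the conjugation—is routine once the uniqueness of the polar decomposition is invoked.
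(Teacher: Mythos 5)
Your proof is correct, and its engine is the same as the paper's: the computation $V_{\alpha}L^{0}_{\xi}V_{\alpha}^{*}(x^{\prime}\Omega_{\varphi})=x^{\prime}V_{\alpha}\xi$ on the common core $M^{\prime}\Omega_{\varphi}$, followed by uniqueness of the polar decomposition. You organize it differently, though, and in two places more efficiently. For the domain assertion the paper runs an analytic--continuation argument on the strip $\mathcal{D}_{\frac{1}{2}}$, using that $V_{\alpha}$ commutes with each $\Delta_{\varphi}^{it}$; you instead invoke $S_{\varphi}=V_{\alpha}S_{\varphi}V_{\alpha}^{*}$, recorded in the paper's preliminaries, which gives $V_{\alpha}\mathfrak{D}(S_{\varphi})=\mathfrak{D}(S_{\varphi})$ in one line. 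And where the paper first proves $V_{\alpha}\zeta=\alpha(v)V_{\alpha}\eta$ by an approximation argument in the cone $P_{S_{\varphi}}$, conjugates only $L_{\eta}$, and then identifies the support of $L_{V_{\alpha}\eta}$ as $\alpha(v^{*}v)$ by a separate computation with the core, you conjugate $L_{\zeta}$ itself and factor through the unitary, so that the kernel matching $\ker\alpha(v)=V_{\alpha}\ker v=\ker\bigl(V_{\alpha}\abs{L_{\zeta}}V_{\alpha}^{*}\bigr)$ needed for uniqueness is immediate. One point you should make explicit: applying your conjugation identity (stated for vectors in $\mathfrak{D}(S_{\varphi})$) to $\eta$ presumes $\eta\in\mathfrak{D}(S_{\varphi})$, which you assume silently. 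It is true: writing $e_{n}=\chi_{[0,n]}(\abs{L_{\zeta}})$, the positive operators $\abs{L_{\zeta}}e_{n}$ lie in $M$, and $\abs{L_{\zeta}}e_{n}\Omega_{\varphi}\rightarrow\eta$ while $S_{0,\varphi}(\abs{L_{\zeta}}e_{n}\Omega_{\varphi})=\abs{L_{\zeta}}e_{n}\Omega_{\varphi}\rightarrow\eta$, so $\eta\in\mathfrak{D}(S_{\varphi})$ with $S_{\varphi}\eta=\eta$. Alternatively you can sidestep this entirely by noting that your core computation only needs $L^{0}_{\eta}$ to be closable, which is part of the setup preceding the lemma, since $L_{\eta}=\abs{L_{\zeta}}$.
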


\begin{proof}
 Let $\mathcal{D}_{\frac{1}{2}}=\{z\in \mathbb{C}: 0\leq \mathfrak{R}(z)\leq \frac{1}{2}\}$. Note that $\mathfrak{D}(S_{\varphi})=\mathfrak{D}(\Delta_{\varphi}^\frac{1}{2})$. Also note that $V_{\alpha}$ commutes with $\Delta_{\varphi}^{it}$ for all $t\in \mathbb{R}$. Since $\zeta\in  \mathfrak{D}(S_{\varphi})$, let $F: \mathcal{D}_{\frac{1}{2}}\rightarrow L^{2}(M,\varphi)$ be a function continuous on $\mathcal{D}_{\frac{1}{2}}$ and analytic in its interior that satisfies $F(it)=\Delta_{\varphi}^{it}\zeta$ for all $t\in \mathbb{R}$. Then $V_{\alpha}\circ F:\mathcal{D}_{\frac{1}{2}}\rightarrow L^{2}(M,\varphi)$ is continuous and analytic in the interior of $\mathcal{D}_{\frac{1}{2}}$ such that $(V_{\alpha}\circ F)(it)=\Delta_{\varphi}^{it}V_{\alpha}\zeta$, for all $t\in \mathbb{R}$. Thus, $V_{\alpha}\zeta\in \mathfrak{D}(S_{\varphi})$.

By the previous discussion, $L_{V_{\alpha}\zeta}^{0}$ is closable. 
Since $P_{S_{\varphi}}=\overline{M_{+}\Omega_{\varphi}}^{\norm{\cdot}_{\varphi}}$, it follows that $V_{\alpha}\eta\in P_{S_{\varphi}}$. Consequently, $L_{V_{\alpha}\eta}^{0}$ is closable and $\alpha(v)L_{V_{\alpha}\eta}=L_{V_{\alpha}\zeta}$. Indeed, since 
$\zeta=v\eta$ and $\eta\in P_{S_{\varphi}}$, a straightforward approximation argument yields $V_{\alpha}\zeta = \alpha(v) V_{\alpha}{\eta}$. So, $L_{V_{\alpha}\zeta}^{0}=\alpha(v)L_{V_{\alpha}\eta}^{0}$. Observe that $V_{\alpha}L_{\eta}V_{\alpha}^{*}$ is densely defined. Note that from the discussion in \S2, it follows that for $x^{\prime}\in M^{\prime}$ we have
\begin{align*}
V_{\alpha}L_{\eta}^{0}V_{\alpha}^{*}(x^{\prime}\Omega_{\varphi})&= V_{\alpha}L_{\eta} ^{0}((\alpha^{-1})^{op}(x^{\prime}))\Omega_{\varphi}\\&=V_{\alpha}((\alpha^{-1})^{op}(x^{\prime}))\eta =x^{\prime} V_{\alpha}\eta = L_{V_{\alpha}\eta}^{0}(x^{\prime}\Omega_{\varphi}).
\end{align*}
Hence, $V_{\alpha}L_{\eta}^{0}V_{\alpha}^{*}=L_{V_{\alpha}\eta}^{0}$, but the left hand side of the previous equality is closable. Thus, $L_{V_{\alpha}\eta}^{0}$ is closable and, denoting its closure by $L_{V_{\alpha}\eta}$, it follows that $L_{V_{\alpha}\eta}=\overline{V_{\alpha}L_{\eta}^{0}V_{\alpha}^{*}}=V_{\alpha}L_{\eta}V_{\alpha}^{*}$. Consequently, $L_{V_{\alpha}\zeta}=\alpha(v)L_{V_{\alpha}\eta}$ and $L_{V_{\alpha}\eta}$ is positive, self--adjoint $($as $L_{V_{\alpha}\eta}=V_{\alpha}L_{\eta}V_{\alpha}^{*})$ and affiliated to $M$. 

Clearly, $M^{\prime}\Omega_{\varphi}$ is a core for $L_{V_{\alpha}\eta}$. Thus, the left support of $L_{V_{\alpha}\eta}$ $($which is its right support as well$)$,  is the projection onto the subspace $\overline{L_{V_{\alpha}\eta}M^{\prime}\Omega_{\varphi}}^{\norm{\cdot}_{\varphi}}$. But a simple calculation shows that 
$\overline{L_{V_{\alpha}\eta}M^{\prime}\Omega_{\varphi}}^{\norm{\cdot}_{\varphi}}=
\overline{V_{\alpha}M^{\prime}\eta}^{\norm{\cdot}_{\varphi}}=V_{\alpha}\overline{M^{\prime}\eta}^{\norm{\cdot}_{\varphi}}$. 
But the projection onto $\overline{M^{\prime}\eta}^{\norm{\cdot}_{\varphi}}$ is the left $($as well as the right$)$ support of $L_{\eta}$,
which is $v^{*}v$. It follows that each of the left and right support of $L_{V_{\alpha}\eta}$ is $\alpha(v^{*}v)$. The argument is complete upon invoking the uniqueness of the polar decomposition.
\end{proof}

\subsubsection{\textbf{Structure of Finite--Dimensional Invariant Subspaces}}

In this section, we exhibit rigidity in the structure of finite--dimensional invariant subspaces of an 
ergodic action. The proof of the main result of this section requires two stages. The first stage handles eigenvectors using hard analysis, and the second handles higher dimensional invariant subspaces by a sort of amplification. 

Let $z_0 \in \mathbb{C}$ and consider the strip $I_{z_0}$ defined as $\{z\in\mathbb{C}|0\le \Re z \le \Re z_0\}$  when $\Re z_0\ge 0$ and as $\{z\in\mathbb{C}|0\le \Re z_0 \le \Re z\}$  when $\Re z_0<0$. Recall the standard fact (cf. 9.20 and 9.21 of \cite{StZs}), that for $(\xi,\eta)\in$ Graph$(\Delta^{z_0}_\varphi)$  if and only if the map $i\mathbb{R}\ni is \mapsto \Delta_{\varphi}^{is}\xi\in L^{2}(M,\varphi)$ has a continuous extension $f$ to $I_{z_0}$ that analytic on this strip's interior and satisfies $f(z_0)=\eta$. 
We call a vector $\xi\in L^{2}(M,\varphi)$ an \textbf{analytic vector} if the map $i\mathbb{R}\ni is \mapsto \Delta_{\varphi}^{is}\xi\in L^{2}(M,\varphi)$
has a weakly entire extension that is continuous on $I_{z_0}$ for every $z_0 \in \mathbb{C}$.
It is also a standard result (see, e.g. 10.16 of \cite{StZs}) that for $\xi\in L^{2}(M,\varphi)$ and $r>0$, 
\begin{align}\label{Analytic}
\xi_{r} = \sqrt{\frac{r}{\pi}}\int_{-\infty}^{\infty}e^{-rt^{2}}\Delta_{\varphi}^{it}\xi dt\rightarrow \xi \text{ as } r\rightarrow\infty \text{ in } \norm{\cdot}_{\varphi},
\end{align}
with each $\xi_{r}$ an analytic vector. We will refer to such a net of vectors $(\xi_r)_{r>0}$ as an \textbf{analytic approximation of} $\xi$. 
The proof of the above result uses the fact that Gaussian distributions with shrinking variances converge weakly to the Dirac measure at $0$. For each $r>0$, the function $F_{r}: \mathbb{C}\rightarrow L^{2}(M,\varphi)$ defined by $F_{r}(z)= \sqrt{\frac{r}{\pi}}\int_{-\infty}^{\infty}e^{-r(t+iz)^{2}}\Delta_{\varphi}^{it}\xi dt$ is weakly entire and continuous on $I_{z_0}$ for all $z_0\in \mathbb{C}$, and extends $i\mathbb{R}\ni is \mapsto \Delta_{\varphi}^{is}\xi_{r}\in L^{2}(M,\varphi)$.  Thus, for all $z\in \mathbb{C}$ we have $\xi_{r}\in \mathfrak{D}(\Delta_{\varphi}^{z})$ and, in particular, 
$\xi_{r}\in\mathfrak{D}(\Delta_{\varphi}^{\frac{1}{2}})$. Consequently, $\xi_{r}\in \mathfrak{D}(S_{\varphi})$. 

The following two theorems locate all finite-dimensional invariant subspaces for the representation $g \mapsto V_g$ inside the centralizer of the underlying von Neumann algebra. The first, Theorem \ref{eigencase}, compares with Theorem 2.5 of \cite{St}. The point of view of its proof is then employed, in concert with the main result of \cite{HLS}, to prove Theorem \ref{InvaraintSubspaceInsideCentralizer}.

\begin{theorem}\label{eigencase}
Let $G$ act ergodically on $M$ preserving $\varphi$. Let $\xi\in L^{2}(M,\varphi)$ be a unit vector such that $V_{g}\xi = \langle g,\chi\rangle \xi$ for all $g\in G$, where $\chi$ is a $($continuous$)$ character of $G$. Then $\xi=u\Omega_{\varphi}$, where $u\in M^{\varphi}$ is a unitary. Moreover, if $\xi\in L^{2}(M,\varphi)\ominus \mathbb{C}\Omega_{\varphi}$ then $\varphi(u)=0$.
\end{theorem}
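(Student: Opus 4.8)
The plan is to reduce to analytic vectors, invoke the polar--decomposition rigidity of Lemma \ref{polardecomposition}, and then apply ergodicity twice --- once to a positive affiliated operator, once to support projections --- finishing with a short modular norm computation that forces the relevant unitary into the centralizer.

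First I would dispose of the fact that $\xi$ need not lie in $\mathfrak{D}(S_{\varphi})$ by passing to its analytic approximation $(\xi_r)_{r>0}$ from Eq. \eqref{Analytic}. Since $\alpha_g$ preserves $\varphi$, the implementing unitary $V_g$ commutes with every $\Delta_{\varphi}^{it}$, so averaging against the Gaussian kernel preserves the eigen--relation: each $\xi_r=\sqrt{r/\pi}\int e^{-rt^2}\Delta_{\varphi}^{it}\xi\,dt$ still satisfies $V_g\xi_r=\langle g,\chi\rangle\xi_r$, and $\xi_r\in\mathfrak{D}(S_{\varphi})$. Thus I may run the entire argument at each fixed $r$ and pass to a limit at the end.

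Next, for fixed $r$ I would form $L_{\xi_r}$ with polar decomposition $L_{\xi_r}=v_r L_{\eta_r}$, where $\eta_r=\abs{L_{\xi_r}}\Omega_{\varphi}\in P_{S_{\varphi}}$. Applying Lemma \ref{polardecomposition} with $\alpha=\alpha_g$ gives that $L_{V_g\xi_r}=\alpha_g(v_r)L_{V_g\eta_r}$ is a polar decomposition. On the other hand $V_g\xi_r=\langle g,\chi\rangle\xi_r$ and $\zeta\mapsto L_\zeta$ is homogeneous, so $L_{V_g\xi_r}=\langle g,\chi\rangle v_r\,L_{\eta_r}$; as $\langle g,\chi\rangle$ is unimodular this too is a polar decomposition. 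Uniqueness then yields $\alpha_g(v_r)=\langle g,\chi\rangle v_r$ together with $L_{V_g\eta_r}=L_{\eta_r}$, i.e. (using $L_{V_g\eta_r}=V_gL_{\eta_r}V_g^{*}$ from the proof of that lemma) $\alpha_g$ fixes the positive self--adjoint affiliated operator $L_{\eta_r}$. Its spectral projections therefore lie in $M^{G}=\mathbb{C}1$, forcing $L_{\eta_r}=c_r1$, hence $\eta_r=c_r\Omega_{\varphi}$ and $\xi_r=c_rv_r\Omega_{\varphi}$. Since $\alpha_g(v_r^{*}v_r)=v_r^{*}v_r$ and likewise for $v_rv_r^{*}$, both support projections lie in $\mathbb{C}1$, so for $r$ large (where $\xi_r\neq0$) the element $v_r$ is a unitary in $M$. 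To place $v_r$ in the centralizer, note $\sigma_t^{\varphi}$ commutes with every $\alpha_g$, so $\sigma_t^{\varphi}(v_r)$ solves the same eigen--relation and $v_r^{*}\sigma_t^{\varphi}(v_r)\in M^{G}=\mathbb{C}1$; thus $\sigma_t^{\varphi}(v_r)=e^{i\omega t}v_r$ for some $\omega\in\mathbb{R}$. This gives $\Delta_{\varphi}^{1/2}v_r\Omega_{\varphi}=e^{\omega/2}v_r\Omega_{\varphi}$, and comparing norms through $S_{\varphi}=J_{\varphi}\Delta_{\varphi}^{1/2}$,
\[
e^{\omega/2}=\norm{\Delta_{\varphi}^{1/2}v_r\Omega_{\varphi}}_{\varphi}=\norm{S_{\varphi}v_r\Omega_{\varphi}}_{\varphi}=\norm{v_r^{*}\Omega_{\varphi}}_{\varphi}=1,
\]
so $\omega=0$ and $v_r\in M^{\varphi}$. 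Finally, for large $r,s$ ergodicity gives $v_r^{*}v_s\in\mathbb{C}1$, so all the unitaries $v_r$ agree up to a unimodular scalar; fixing $u:=v_{r_0}$ one gets $\xi_r=c_r\mu_r u\Omega_{\varphi}$, and letting $r\to\infty$ (with $c_r=\norm{\xi_r}_{\varphi}\to1$) yields $\xi=\nu u\Omega_{\varphi}$ with $\abs{\nu}=1$, so $\nu u\in M^{\varphi}$ is the required unitary. The moreover clause is then immediate: $\langle\xi,\Omega_{\varphi}\rangle_{\varphi}=\varphi(\nu u)$, which vanishes exactly when $\xi\perp\Omega_{\varphi}$.

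I expect the main obstacle to be the polar--decomposition rigidity step: one must secure $\xi_r\in\mathfrak{D}(S_{\varphi})$ so that Lemma \ref{polardecomposition} even applies, and then read off from uniqueness of the polar decomposition that the unimodular eigenvalue passes entirely into the partial isometry $v_r$ while fixing the positive part $L_{\eta_r}$. The modular norm computation forcing $\omega=0$ is the secondary crux, since it is precisely what pins $v_r$ into the centralizer $M^{\varphi}$ rather than merely into $M$.
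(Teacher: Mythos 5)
Your proposal is correct, and its main body coincides with the paper's proof: analytic approximation $(\xi_r)$ preserving the eigenrelation, polar decomposition $L_{\xi_r}=v_rL_{\eta_r}$, Lemma \ref{polardecomposition} plus uniqueness of the polar decomposition to get $\beta_g(v_r)=\langle g,\chi\rangle v_r$ and $V_g\eta_r=\eta_r$, ergodicity applied to the support projections and to the spectral resolution of $L_{\eta_r}$ to force $v_r$ unitary and $\eta_r\in\mathbb{R}_{>0}\Omega_{\varphi}$, and the modular computation $\sigma_t^{\varphi}(v_r)=e^{i\omega t}v_r$ with the norm argument through $S_{\varphi}=J_{\varphi}\Delta_{\varphi}^{1/2}$ killing $\omega$. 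Where you genuinely diverge is the endgame. The paper concludes from $\xi_r\in M^{\varphi}\Omega_{\varphi}$ only that $\xi\in L^{2}(M^{\varphi},\varphi)$, and then runs the entire polar-decomposition argument a \emph{second} time, now inside the finite von Neumann algebra $M^{\varphi}$ with its tracial state $\varphi|_{M^{\varphi}}$, where every $L^2$-vector is in the domain of the $S$-operator, to write $\xi=u\Omega_{\varphi}$ directly. You instead exploit ergodicity once more: $\beta_g(v_r^{*}v_s)=v_r^{*}v_s$ places $v_r^{*}v_s$ in $M^{G}=\mathbb{C}1$, so all the $v_r$ agree up to unimodular scalars, and hence every $\xi_r$ is a scalar multiple of the single unit vector $u\Omega_{\varphi}$; passing to the limit gives $\xi=\nu u\Omega_{\varphi}$ with $\abs{\nu}=1$. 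This is a valid and arguably cleaner finish: it avoids any appeal to the theory of operators affiliated to a finite von Neumann algebra (the ingredient the paper's second pass quietly relies on), at the cost of only the same phase-rigidity trick the paper already uses for $v_r^{*}\sigma_t^{\varphi}(v_r)$. Two cosmetic points: you write $\alpha_g$ for the action the paper calls $\beta_g$, and your parenthetical $c_r=\norm{\xi_r}_{\varphi}\to1$ is best justified by noting $c_r\mu_r=\langle\xi_r,u\Omega_{\varphi}\rangle_{\varphi}$ converges, which simultaneously produces $\nu$ and shows $\abs{\nu}=1$; neither affects correctness.
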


\begin{proof}
Let $\{\xi_{r}:r>0\}$, with $\xi_{r}$ defined in Eq. \eqref{Analytic}, be an analytic approximation of $\xi$. Then there exists $r_{0}$ such that $\xi_{r}\neq 0$ for all $r\geq r_{0}$. Fix $r\geq r_{0}$. Note that $V_{g}\Delta_{\varphi}^{it}=\Delta_{\varphi}^{it}V_{g}$ for all $g\in G$ and for all $t\in\mathbb{R}$. So $V_{g}\xi_{r}=\langle g,\chi\rangle \xi_{r}$ for all $g\in G$. Since $\xi_{r}\in \mathfrak{D}(S_{\varphi})$, it then follows that $V_g \xi_{r} \in \mathfrak{D}(S_{\varphi})$ as well.

Denote by $L_{\xi_{r}}=v_{r}L_{\eta_{r}}$ the polar decomposition of  $L_{\xi_{r}}$. By Lemma \ref{polardecomposition}, 
$L_{V_{g}\xi_{r}}=\beta_{g}(v_{r}) L_{V_{g}\eta_{r}}$ 
is the polar decomposition of $L_{V_{g}\xi_{r}}$, for each $g\in G$.   On the other hand, for any such $g$ we have $L_{V_g \xi_r} = \langle g,\chi\rangle L_{\xi_r} = \langle g, \chi \rangle v_r L_{\eta_r},$
so for all $g \in G$, 
\begin{align*}
L_{V_{g}\xi_{r}}=\langle g,\chi\rangle v_{r} L_{\eta_{r}}=\beta_{g}(v_{r}) L_{V_{g}\eta_{r}}. 
\end{align*}
Hence, by uniqueness of the polar decomposition, it follows that for all $g\in G$ one has 
\begin{align*}
\beta_{g}(v_{r})=\langle g,\chi\rangle v_{r} \text{ and } V_{g}\eta_{r}=\eta_{r}.
\end{align*}
Consequently, $\beta_{g}(v_{r}^{*}v_{r})=v_{r}^{*}v_{r}$ for all $g\in G$, which by ergodicity implies $v_{r}^{*}v_{r}=c1_{M}$, where $c>0$. But $v_{r}$ is a nonzero partial isometry, thus $\norm{v_{r}}=1$. 	So $c=\norm{v_{r}}^{2}=1$, and therefore $v_{r}^{*}v_{r}=1_{M}$. We similarly obtain $v_{r}v_{r}^{*}=1_{M}$ and therefore $v_{r}\in M$ is unitary. 

Now, by the spectral theorem, write $L_{\eta_{r}} =\int_{0}^{\infty}\lambda de_{\lambda}$. Note that $e_{\lambda}\in M$ for all $\lambda\geq 0$. Since $V_{g}\eta_{r}=\eta_{r}$, we have that $L_{\eta_{r}}=\int_{0}^{\infty}\lambda d\beta_{g}(e_{\lambda})$ for all $g\in G$. Since the resolution of the identity in the spectral theorem is unique $($see, e.g., Theorem 5.6.15 of \cite{KRI}$)$, it follows that 
$\beta_{g}(e_{\lambda})=e_{\lambda}$  for all $g\in G$, and ergodicity implies  that $e_{\lambda}$ is trivial for all $\lambda$. Consequently, the support of $L_{\eta_{r}}$ is $1_M$, and therefore $\eta_{r}=b_r\Omega_{\varphi}$ for some real number $b_r>0$. 

Now, $\sigma_{t}^{\varphi}\beta_{g}=\beta_{g}\sigma_{t}^{\varphi}$ implies that $\beta_{g}(\sigma_{t}^{\varphi}(v_{r}))=\langle g,\chi\rangle \sigma_{t}^{\varphi}(v_{r})$ for all $t\in \mathbb{R}$ and all $g\in G$. Thus, for all $g\in G$ and $t\in \mathbb{R}$ 
\begin{align*}
\beta_{g}(v_{r}^{*}\sigma_{t}^{\varphi}(v_{r}))=v_{r}^{*}\sigma_{t}^{\varphi}(v_{r}).
\end{align*}
Hence, by ergodicity of the action we have, $\sigma_{t}^{\varphi}(v_{r})=c_{t}v_{r}$,  where $\abs{c_{t}}=1$ for all $t\in \mathbb{R}$. Since $\sigma^{\varphi}_{t+s}=\sigma^{\varphi}_{t}\circ \sigma^{\varphi}_{s}$ for all $t,s\in \mathbb{R}$, we have $c_{t+s}=c_{t}c_{s}$, and $\{c_{t}\}_{t\in \mathbb{R}}$ is a subgroup of the unit circle. Thus, there is an $\omega\in \mathbb{R}$ such that $c_{t}=e^{i\omega t}$ for all $t\in \mathbb{R}$, i.e.   $\sigma_{t}^{\varphi}(v_{r})=e^{i\omega t}v_{r}$ for all $t\in \mathbb{R}$. The function $i\mathbb{R}\ni it\mapsto \Delta_{\varphi}^{it}(v_{r}\Omega_{\varphi}) \in L^{2}(M,\varphi)$ has an entire extension $\mathbb{C}\ni z\mapsto e^{\omega z} v_{r}\Omega_{\varphi}\in L^{2}(M,\varphi)$. Thus, 
$\Delta_{\varphi}^{\frac{1}{2}}v_{r}\Omega_{\varphi}=e^{\frac{\omega}{2}} v_{r}\Omega_{\varphi}$. Since $S_{\varphi}=J_{\varphi}\Delta_{\varphi}^{\frac{1}{2}}$ and $S_{\varphi}$ is the closure of $S_{0,\varphi}$, we obtain that 
$v_{r}^{*}\Omega_{\varphi}= J_{\varphi}\Delta_{\varphi}^{\frac{1}{2}}v_{r}\Omega_{\varphi}=e^{\frac{\omega}{2}}J_{\varphi}v_{r}\Omega_{\varphi}$, i.e., $J_{\varphi}v_{r}\Omega_{\varphi}= e^{-\frac{\omega}{2}}v_{r}^{*}\Omega_{\varphi}$. As $J_{\varphi}$ is anti--unitary and $v_{r}$ is unitary, we see that $\omega=0$ upon applying $\norm{\cdot}_{\varphi}$ to the previous expression. Consequently, $v_{r}\in M^{\varphi}$, so that $\xi_{r}\in  L^{2}(M^{\varphi},\varphi)$. Since this is true for all $r\geq r_{0}$ and $\xi_{r}\rightarrow \xi$ in $\norm{\cdot}_{\varphi}$, we find that  $\xi \in L^{2}(M^{\varphi},\varphi)$. 

Note that 
$G\ni g\mapsto \beta_{g}$ implements an action on $M^{\varphi}$ preserving $\varphi$.  Restricting our attention now to $ L^{2}(M^{\varphi},\varphi)$, 
if we let $L_{\xi}=uL_{\eta}$  denote the polar decomposition of $\xi$, then  $u\in M^{\varphi}$ and $\eta\in L^{2}(M^{\varphi},\varphi)$.  
Since $\xi\in L^{2}(M^{\varphi},\varphi)$, Lemma \ref{polardecomposition} and the arguments used on the $\xi_{r}$ apply to $\xi$ as well and we have 
\begin{align*}
\beta_{g}(u)=\langle g,\chi\rangle u \text{ and } V_{g}\eta=\eta \text{ for all }g\in G. 
\end{align*}
By the same argument as above, with the additional fact that $\xi$ was a unit vector, $\eta=\Omega_{\varphi}$ and 
$u\in \mathcal{U}(M^{\varphi})$. It follows that $\xi = u\Omega_{\varphi}$. 

If $\xi=u\Omega_{\varphi}\in L^{2}(M,\varphi)\ominus \mathbb{C}\Omega_{\varphi}$ then $\varphi(u)=\langle u\Omega_{\varphi},\Omega_{\varphi}\rangle=0$. This completes the proof.
\end{proof}

The next theorem is a generalization of Theorem \ref{eigencase}. The idea in its proof is to find an appropriate replacement of the character appearing in Theorem \ref{eigencase}. We need to amplify the dynamics appropriately to obtain the result. To keep notation simple, we will use the same symbols $\widetilde{M}$ and so on as discussed in the beginning of \S\ref{PD} to denote the amplifications. This will be clear from the context and will cause no confusion.

\begin{theorem}\label{InvaraintSubspaceInsideCentralizer}
Let $G$ act ergodically on $M$ preserving the state $\varphi$. Let $\mathcal{K}\subseteq L^{2}(M,\varphi)$ be a nonzero finite--dimensional invariant subspace of the associated representation $\pi_{V}$ of $G$ on $L^{2}(M,\varphi)$. Then $\mathcal{K}\subseteq M^{\varphi}\Omega_{\varphi}$. 
\end{theorem}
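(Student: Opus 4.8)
The plan is to reduce the general finite-dimensional case to the eigenvector case of Theorem \ref{eigencase} by an amplification that converts the matrix of transition coefficients of $\pi_V|_{\mathcal K}$ into a single ``matrix-valued eigenvalue.'' Concretely, fix an orthonormal basis $\xi_1,\dots,\xi_n$ of $\mathcal{K}$ and let $\pi\colon G\to U(n)$ be the continuous unitary representation determined by $V_g\xi_j=\sum_i\pi_{ij}(g)\xi_i$. Passing to the amplification $\widetilde M=M\otimes M_n(\mathbb{C})$ with $\widetilde\varphi=\varphi\otimes tr$ as in the beginning of \S\ref{PD}, I would encode the whole subspace in one vector $\Xi\in L^2(\widetilde M,\widetilde\varphi)$, namely the matrix $\Xi=[\zeta_{ij}]$ with first row $\zeta_{1j}=\xi_j$ and all other entries $0$. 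A direct computation then gives the matrix eigen-equation $(V_g\otimes 1)\Xi=\Xi\cdot(1\otimes\pi(g))$, where the right-hand side is right multiplication by $1\otimes\pi(g)\in\widetilde M$; this is the exact analogue of $V_g\xi=\langle g,\chi\rangle\xi$ with the scalar character replaced by $\pi(g)$. Since the centralizer of $\widetilde\varphi$ is $\widetilde M^{\widetilde\varphi}=M^\varphi\otimes M_n(\mathbb{C})$, the theorem will follow once I show $\Xi\in\widetilde M^{\widetilde\varphi}\Omega_{\widetilde\varphi}$: reading off the first row of such a vector yields $\xi_i\in M^\varphi\Omega_\varphi$ for each $i$, hence $\mathcal{K}\subseteq M^\varphi\Omega_\varphi$.

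Next I would run the polar-decomposition argument of Theorem \ref{eigencase} in $\widetilde M$. Replacing $\Xi$ by its analytic approximants $\Xi_r$ (permissible because $\Delta_{\widetilde\varphi}^{it}=\Delta_\varphi^{it}\otimes 1$ commutes both with $V_g\otimes 1$ and with right multiplication by $1\otimes\pi(g)$, so each $\Xi_r$ still satisfies the matrix eigen-equation and lies in $\mathfrak{D}(S_{\widetilde\varphi})$), write the polar decomposition $L_{\Xi_r}=v_r L_{\eta_r}$ with $v_r\in\widetilde M$ and $\eta_r=\abs{L_{\Xi_r}}\Omega_{\widetilde\varphi}\in P_{S_{\widetilde\varphi}}$. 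Applying Lemma \ref{polardecomposition} to the automorphism $\beta_g\otimes id$ (whose canonical unitary is $V_g\otimes 1$) together with uniqueness of the polar decomposition, the matrix eigen-equation forces $(V_g\otimes 1)\eta_r=\eta_r$ and the operator identity $(\beta_g\otimes id)(V_r)=V_r\,(1\otimes\pi(g))$ for $V_r:=v_r L_{\eta_r}\in\widetilde M$ (here $V_r\Omega_{\widetilde\varphi}=\Xi_r$, and one uses that $1\otimes\pi(g)$ lies in the centralizer so its right and left actions agree on $\Omega_{\widetilde\varphi}$). Crucially, ergodicity of the \emph{original} action pins down the positive part: the $(V_g\otimes 1)$-fixed vectors in $L^2(M,\varphi)\otimes M_n(\mathbb{C})$ are exactly $\Omega_\varphi\otimes M_n(\mathbb{C})$, so $\eta_r=\Omega_\varphi\otimes h_r$ with $h_r\geq 0$ and $L_{\eta_r}=1\otimes h_r\in\widetilde M$.

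At this stage the argument of Theorem \ref{eigencase} breaks down, and this is the main obstacle: there the final step used ergodicity to conclude that $v^*\sigma_t^\varphi(v)$ is a scalar, but the amplified action $\beta_g\otimes id$ is never ergodic (its fixed algebra is $1\otimes M_n(\mathbb{C})$), so the same computation only yields $\sigma_t^{\widetilde\varphi}(v_r)=v_r(1\otimes w_t)$ for a one-parameter family of unitaries $w_t\in M_n(\mathbb{C})$, and the scalar/antiunitary trick no longer closes. To control this finite-dimensional ambiguity I would invoke the main result of \cite{HLS}: the operator equation $(\beta_g\otimes id)(V_r)=V_r(1\otimes\pi(g))$ exhibits $V_r\Omega_{\widetilde\varphi}$ inside a finite-dimensional invariant subspace of the amplified dynamics, and \cite{HLS} locates such subspaces inside the centralizer, forcing $V_r\in\widetilde M^{\widetilde\varphi}=M^\varphi\otimes M_n(\mathbb{C})$. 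Hence $\Xi_r=V_r\Omega_{\widetilde\varphi}\in(M^\varphi\otimes M_n(\mathbb{C}))\Omega_{\widetilde\varphi}$, and since this subspace is closed in $\norm{\cdot}_{\widetilde\varphi}$ and $\Xi_r\to\Xi$, we obtain $\Xi\in\widetilde M^{\widetilde\varphi}\Omega_{\widetilde\varphi}$. Reading off the first row then completes the proof. The delicate points to verify carefully will be the passage between the right-multiplication form of the eigen-equation and the operator identity in $\widetilde M$ (where the centrality of $1\otimes\pi(g)$ is essential), and the precise form in which the main result of \cite{HLS} is applied to the amplified system.
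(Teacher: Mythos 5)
Your proposal follows the same overall architecture as the paper (amplification to $\widetilde M=M\otimes M_{n}(\mathbb{C})$, rigidity of polar decompositions via Lemma \ref{polardecomposition}, input from \cite{HLS}), but two of your steps fail. The most serious one: with your encoding (basis in the first \emph{row}, so the eigen-equation reads $(V_{g}\otimes 1)\Xi=\Xi\cdot(1\otimes\pi(g))$ with the unitary acting on the \emph{right}), uniqueness of the polar decomposition does \emph{not} force $(V_{g}\otimes1)\eta_{r}=\eta_{r}$. Indeed $L_{(V_{g}\otimes1)\Xi_{r}}=\overline{L_{\Xi_{r}}(1\otimes\pi(g))}$, and for a closed operator $A=v\abs{A}$ and a unitary $u$ one has $Au=(vu)\,(u^{*}\abs{A}u)$: the unitary is absorbed into the partial isometry only at the cost of \emph{conjugating} the positive part. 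So uniqueness yields $\widetilde\beta_{g}(v_{r})=v_{r}(1\otimes\pi(g))$ together with $\widetilde\beta_{g}(L_{\eta_{r}})=(1\otimes\pi(g))^{*}L_{\eta_{r}}(1\otimes\pi(g))$; that is, $L_{\eta_{r}}$ is fixed only by the \emph{twisted} action $\beta_{g}\otimes\mathrm{Ad}\,\pi(g)$, whose fixed-point algebra is not $1\otimes M_{n}(\mathbb{C})$ (its nontrivial fixed points are exactly matrix-valued eigenfunctions, the very objects under study). Hence ergodicity of $\beta$ does not give $\eta_{r}=\Omega_{\varphi}\otimes h_{r}$, and you lose the boundedness of $V_{r}=L_{\Xi_{r}}$. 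That boundedness is what your appeal to \cite{HLS} requires: the H{\o}egh-Krohn--Landstad--St{\o}rmer theorem concerns bounded algebra elements with norm-precompact orbits under an \emph{ergodic} action, whereas you invoke it for a possibly unbounded affiliated operator and for the amplified action $\beta\otimes id$, which is never ergodic. The paper avoids all of this by placing the basis in the first \emph{column}: then the representation matrix $1\otimes v_{g}$ multiplies $L_{\zeta}$ on the \emph{left}, is absorbed by the partial isometry without disturbing $L_{[\eta_{ij}]}$, ergodicity of $\beta$ puts the spectral projections of $L_{[\eta_{ij}]}$ in the genuine fixed algebra $1\otimes M_{n}(\mathbb{C})$, so $L_{[\eta_{ij}]}\in 1\otimes M_{n}(\mathbb{C})$ is bounded; only then does \cite{HLS} enter, applied to the original ergodic system through the bounded finite-dimensional $G$-invariant subspace $M_{F}\subseteq M$ with $\mathcal{K}_{r}=M_{F}\Omega_{\varphi}$.

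The second gap is your closing limit argument: $(M^{\varphi}\otimes M_{n}(\mathbb{C}))\Omega_{\widetilde\varphi}$ is \emph{not} closed in $\norm{\cdot}_{\widetilde\varphi}$ unless $M^{\varphi}$ is finite-dimensional -- the GNS image of a von Neumann algebra is a dense, generally proper, subspace of its $L^{2}$-space. So even if each $\Xi_{r}$ did lie in $\widetilde M^{\widetilde\varphi}\Omega_{\widetilde\varphi}$, letting $r\to\infty$ would only give $\Xi\in L^{2}(\widetilde M^{\widetilde\varphi},\widetilde\varphi)$, i.e. $\mathcal{K}\subseteq L^{2}(M^{\varphi},\varphi)$, which is strictly weaker than the assertion $\mathcal{K}\subseteq M^{\varphi}\Omega_{\varphi}$. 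This is precisely why the paper's proof has a third stage: having located $\mathcal{K}$ inside $L^{2}(M^{\varphi},\varphi)$, it reruns the polar-decomposition argument \emph{inside the finite von Neumann algebra} $M^{\varphi}$, where $\varphi$ restricts to a trace, so the original vectors $\xi_{i}$ themselves (not merely their analytic approximants) give closable operators affiliated to the amplification of $M^{\varphi}$; ergodicity is then used once more to force the entries of the positive part into $\mathbb{C}\Omega_{\varphi}$ and conclude $\xi_{i}\in M^{\varphi}\Omega_{\varphi}$. Your proposal omits this upgrading step entirely, and without it the theorem as stated is not proved.
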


\begin{proof}
We first show that there exists a finite--dimensional invariant subspace of analytic vectors. Let $dim (\mathcal{K})=m$ and let $(\xi_{1},\xi_{2},\cdots, \xi_{m})$ be an orthonormal basis of $\mathcal{K}$. Following Eq. \eqref{Analytic}, let $\xi_{i,r}$, $r>0$, be analytic approximations of $\xi_{i}$ for all $1\leq i\leq m$. 
There exists $r_{0}>0$ such that $\xi_{i,r}\neq 0$ for all $r>r_{0}$ and for all $1\leq i\leq m$. Fix $r>r_{0}$. As $V_{g}\Delta_{\varphi}^{it}=\Delta_{\varphi}^{it}V_{g}$ for all $t\in \mathbb{R}$ and all $g\in G$, the space 
$\mathcal{K}_{r}=\text{span }\{\xi_{i,r}:1\leq i\leq m\}$ is $G$--invariant. Let $(\zeta_{1},\zeta_{2},\cdots, \zeta_{n})$ be an orthonormal basis of $\mathcal{K}_{r}$. Note that $n\leq m$.

Consider $\widetilde{M}=M\otimes M_{n}(\mathbb{C})$, equipped with the faithful normal state $\widetilde{\varphi}= \varphi\otimes tr$, where $tr$ is the normalized trace on $M_{n}(\mathbb{C})$. Borrowing notation and the facts associated to the amplification from \S\ref{PD}, let $\widetilde{M}$ act on the left of $L^{2}(\widetilde{M},\widetilde{\varphi})$ in the GNS representation. Let $G$ act on 
$\widetilde{M}$ via $\widetilde{\beta}_{g}=\beta_{g}\otimes 1$, $g\in G$. As the $G$--action on $M$ is ergodic, it follows that the fixed point algebra $\widetilde{M}^{G}$ of the $G$ action on $\widetilde{M}$ is given by $\widetilde{M}^{G}=1\otimes M_{n}(\mathbb{C})$. Note that $\widetilde{\beta}_{g}$ is implemented by the unitary $\widetilde{V}_{g}=V_{g}\otimes 1_{n}$ for each $g\in G$. Let 
\begin{align*}
\zeta=\begin{pmatrix}\zeta_{1}&0&0\text{ }\cdots &0\\
                                             \zeta_{2}&0&0\text{ }\cdots &0\\
                                             \vdots&\vdots&\vdots\text{ }\cdots &\vdots\\                                           
                                             \zeta_{n}&0&0\text{ }\cdots &0\\
                       \end{pmatrix}\in L^{2}(\widetilde{M},\widetilde{\varphi}).
\end{align*}

Note that $\zeta\in \mathfrak{D}(S_{\widetilde{\varphi}})$, since $\zeta_{j}$ is analytic for all $1\leq j\leq n$. To see this, note that each $\zeta_{j}$ is a finite linear combination of the analytic vectors $\xi_{k,r}$ and use the fact that linear combinations of entire functions are entire to obtain the desired entire extension.

Let $L_{\zeta} = [u_{ij}]L_{[\eta_{ij}]}$ denote the polar decomposition of $L_\zeta$. Then by Lemma \ref{polardecomposition}, it follows that for all $g\in G$,
\begin{align}\label{polardecompofzeta}
L_{\widetilde{V}_{g}\zeta} = \widetilde{\beta}_{g}([u_{ij}])L_{\widetilde{V}_{g}[\eta_{ij}]}
\end{align}
is the polar decomposition of $L_{\widetilde{V}_{g}\zeta}$. 

Since $\mathcal{K}_{r}$ is an invariant subspace of the $G$--action, for every $g\in G$ we have that $(V_{g}\zeta_{i})_{i=1}^{n}$ is an orthonormal basis of $\mathcal{K}_{r}$. Thus, for $g\in G$, there exists a unitary $v_{g}\in \mathcal{U}(M_{n}(\mathbb{C}))$ such that $(V_{g}\zeta_{1}, V_{g}\zeta_{2},\cdots, V_{g}\zeta_{n})^{T}=(1\otimes v_{g})(\zeta_{1},\zeta_{2},\cdots,\zeta_{n})^{T}$,  $($where the superscript $T$ denotes the transpose$)$. Hence, denoting by $O_{n,n-1}$ the $n \times (n-1)$ zero matrix, one has 
\begin{align}\label{polarmatrix}
\widetilde{V}_{g}\zeta&=\begin{pmatrix}V_{g}\zeta_{1}&0\text{ }\cdots &0\\
                                             V_{g}\zeta_{2}&0\text{ }\cdots &0\\
                                             \vdots&\vdots\text{ }\cdots &\vdots\\                                           
                                             V_{g}\zeta_{n}&0\text{ }\cdots &0\\
                       \end{pmatrix}= \begin{pmatrix}
                                            (1\otimes v_{g}) \begin{pmatrix}
                                              \zeta_{1}\\\zeta_{2}\\\vdots\\\zeta_{n}
                                              \end{pmatrix}    O_{n,n-1}                                                            
                                           \end{pmatrix}\\&=(1\otimes v_{g})\begin{pmatrix}\zeta_{1}&0\text{ }\cdots &0\\ \nonumber
                                             \zeta_{2}&0\text{ }\cdots &0\\
                                             \vdots&\vdots\text{ }\cdots &\vdots\\                                           
                                             \zeta_{n}&0\text{ }\cdots &0\\
                       \end{pmatrix}\\
                      \nonumber &= (1\otimes v_{g})[u_{ij}][\eta_{ij}], \text{ }g\in G.
\end{align}

The partial isometry $w_g=(1\otimes v_{g})[u_{ij}]$ has the same initial space as $[u_{ij}]$, i.e.  $w_g^*w_{g}=[u_{ij}]^{*}[u_{ij}]$, hence Eq. \eqref{polarmatrix} implies that $L_{\widetilde{V}_g\zeta}=(1\otimes v_{g})[u_{ij}]L_{[\eta_{ij}]}$ is also the polar decomposition of $L_{\widetilde{V}_{g}\zeta}$ for all $g\in G$. Therefore,
by uniqueness of the polar decomposition it follows that for all $g \in G$, 
\begin{align}\label{Equal}
\widetilde{\beta}_{g}([u_{ij}])=(1\otimes v_{g})[u_{ij}] \text{ and } L_{\widetilde{V}_{g}[\eta_{ij}]}=L_{[\eta_{ij}]}.
\end{align}
Consequently, $\widetilde{V}_g[\eta_{ij}]=[\eta_{ij}]$ for all $g\in G$. 

First we investigate $[\eta_{ij}]$. Consider the decomposition $L_{[\eta_{ij}]}=\int_{0}^{\infty}\lambda de_{\lambda}$, where $e_{\lambda}\in \widetilde{M}=M\otimes M_{n}(\mathbb{C})$ for all $\lambda\geq 0$. Since $\widetilde{V}_{g}[\eta_{ij}]=[\eta_{ij}]$, arguing as in the proof of Theorem \ref{eigencase} we find $\widetilde{\beta}_{g}(e_{\lambda})=e_{\lambda}$ for all $\lambda\geq 0$. Thus, $e_{\lambda}\in 1\otimes M_{n}(\mathbb{C})$ for all $\lambda\geq 0$. Consequently, $[\eta_{ij}]=L_{[\eta_{ij}]}\Omega_{\widetilde{\varphi}}\in \mathbb{C}\Omega_{\varphi}\otimes M_{n}(\mathbb{C})$. 

Therefore,
\begin{align*}
\zeta=[u_{ij}][\eta_{ij}]\in (M\otimes M_{n}(\mathbb{C}))\Omega_{\widetilde{\varphi}},
\end{align*}
which implies that $\zeta_{j}\in M\Omega_{\varphi}$ for all $1\leq j\leq n$. This shows that $\mathcal{K}_{r}\subseteq M\Omega_{\varphi}$. Consequently, there is a finite dimensional $G$--invariant subspace $M_F$ of $M$ such that $\mathcal{K}_r=M_F\Omega_{\varphi}$ $($as $\Omega_{\varphi}$ is a separating vector of $M)$. Since every bounded subset of a finite--dimensional normed space is pre--compact with respect to its norm, the orbit $G\cdot y=\{\beta_g(y):g\in G\}$ of each $y\in M_F$ is a pre-compact subset of $M$. It is easy to see that $M_F$ is a subspace of the $C^\ast$-subalgebra 
\begin{align*}
M_\mathfrak{Cpt}=\{x\in M: G\cdot x \text{ is pre--compact in } \norm{\cdot}\}
\end{align*}
of $M$. By a celebrated result of 
H{\o}egh--Krohn,  Landstad, and St{\o}rmer \cite{HLS}, one has $M_\mathfrak{Cpt}\subseteq M^{\varphi}$, so $\mathcal{K}_r\subseteq M^{\varphi}\Omega_{\varphi}$ and consequently, $\mathcal{K}\subseteq L^{2}(M^{\varphi},\varphi)$.

Finally let 
\begin{align*}
\xi=\begin{pmatrix}\xi_{1}&0&0\text{ }\cdots &0\\
                                             \xi_{2}&0&0\text{ }\cdots &0\\
                                             \vdots&\vdots&\vdots\text{ }\cdots &\vdots\\                                           
                                             \xi_{m}&0&0\text{ }\cdots &0\\
                       \end{pmatrix}\in L^{2}(\widetilde{M}^{\widetilde{\varphi}},\widetilde{\varphi})\subseteq L^{2}(\widetilde{M},\widetilde{\varphi}).
\end{align*}
As a caution, note that to keep notation simple we have denoted both amplifications $M\otimes M_n(\mathbb{C})$ and $M\otimes M_m(\mathbb{C})$ by $\widetilde{M}$. However, this will cause no confusion.

Now, working in the setup of a finite von Neumann algebra equipped with a faithful normal tracial state, we can write the polar decompositon of $L_{\xi}$ as $L_{\xi}=[w_{ij}]L_{[\delta_{ij}]}$, where $[w_{ij}]\in \widetilde{M}^{\widetilde{\varphi}}$ and $L_{[\delta_{ij}]}$ is positive, self--adjoint and affiliated to $\widetilde{M}^{\widetilde{\varphi}}\otimes M_{m}(\mathbb{C})$. Using ergodicity, the same argument as above with $\zeta$ replaced by $\xi$ shows that $\delta_{ij}\in \mathbb{C}\Omega_{\varphi}$ for all $1\leq i,j\leq m$. Consequently, 
$\xi_{j}\in M^{\varphi}\Omega_{\varphi}$, which completes the proof.
\end{proof}

\begin{remark}
 \emph{An anonymous referee kindly provided the following sketch of an alternative approach to the above result, which we reproduce here with his or her permission. In the setting of Theorem \ref{InvaraintSubspaceInsideCentralizer} assume first that the finite--dimensional invariant subspace $\mathcal{K}$ is in the domain of $S_{\varphi}$. Let $\{\zeta_{i}\}_{i=1}^{n}$ be an orthonormal basis in $\mathcal{K}$. Consider the operator $$T=L_{\xi_{1}}^{\ast}L_{\xi_1}+\cdots+L_{\xi_{n}}^{\ast}L_{\xi_n},$$
more precisely, the positive operator defined by the closure of the quadratic form on $M'\Omega_{\varphi}$ defined by the above sum. Then $T$ is independent of the choice of $\{\zeta_{i}\}_{i=1}^{n}$ and is $G$--invariant. Hence it is scalar, so the operators $L_{\zeta_{i}}$ are bounded and $\mathcal{K}\subset M\Omega_{\varphi}$.
Consider the C$^{\ast}$ algebra $A$ generated by all finite--dimensional $G$--invariant subspaces of $M$. Letting $\widetilde{G}$ be the Bohr compactification of $G$ we get an action of $\widetilde{G}$ on $A$. Hence, by \cite{HLS}, the restriction of $\varphi$ to $A$ is tracial, so for the von Neumann algebra $N=A''$ we get that the modular group of $\varphi|_{N}$ is trivial. But as $N$ is $\sigma_{t}^{\varphi}$--invariant, we have $\sigma_{t}^{\varphi|_{N}}=\sigma_{t}^{\varphi}|_{N}$. Hence $N\subset M^{\varphi}$ and $\mathcal{K}\subset A\Omega_{\varphi}\subset M_{\varphi}\Omega_{\varphi}$. To obtain the general case, one approximates $\mathcal{K}$ by spaces of analytic vectors and apply the above argument to these spaces to obtain that $\mathcal{K}\subset \overline{M^{\varphi}\Omega_{\varphi}}$. one then applies the argument in the first paragraph once more, now to $M^{\varphi}$ instead of $M$, to finally obtain that $\mathcal{K}\subset M^{\varphi}\Omega_{\varphi}$.} 
\end{remark}

With Theorems \ref{eigencase} and \ref{InvaraintSubspaceInsideCentralizer} in hand, we will next finish the characterization of weakly mixing systems. After doing so, we will return to discuss some important consequences of Theorem \ref{InvaraintSubspaceInsideCentralizer}. 

\subsection{Characterization of Weak Mixing}

The main results of this section characterize weakly mixing systems in terms of joinings, 
providing a generalization to the noncommutative setting of an important basic result in classical ergodic theory. We emphasize that these results, unlike their recent analogues in \cite{Du2} and \cite{Du3}, do not require the assumption that the acting group is abelian. Furthermore, Theorem \ref{OtherwayWmDisjointCompact} establishes the existence of a nontrivial joining between an ergodic, non--weakly mixing system and an ergodic, compact system arising from an action on a type I von Neumann algebra. The methods we use to obtain this section's results appear to be necessary: the work in 
\cite{Du2} relies on a definition of weakly mixing actions that is not available when the acting group 
is nonabelian, and it is pointed out in \cite{Du3} that the techniques from harmonic analysis used 
there are particular to the abelian setting. Another potential approach to the results in \cite{Du3} 
follows an argument of Neshveyev and St{\o}rmer $($see Theorem 2.1 \cite{NeSt}$)$, however, 
these methods also do not generalize in an obvious way to the setting of a nonabelian acting group. 

We will require the following auxiliary result, which is an averaging technique implicit in \cite{Ta} and \cite {KoSz}. This is surely well--known to experts, but we provide a proof for the sake of completeness.

\begin{proposition}\label{average}$($Kov\'{a}cs--Sz\"{u}cs--Takesaki$)$
Let $\beta$ be a continuous action of a separable locally compact group $ G $ on a von Neumann 
algebra $M$ that preserves a faithful normal state $\varphi$. For $x\in M$, if 
$K(x)=\overline{conv}^{w^{*}}\{\beta_{g}(x):g\in  G \}$ 
then $K(x)\cap M^{ G }=\{\mathbb{E}_{M^{ G }}(x)\}$, where $\mathbb{E}_{M^{ G }}$ is the unique faithful normal conditional expectation \footnote{Since $\beta_{g}\sigma_{t}^{\varphi}=\sigma_{t}^{\varphi}\beta_{g}$
for all $t\in \mathbb{R}$ and $g\in  G $, by the main theorem of \cite{Ta} $($cf. \S3 \cite{Ta}$)$ we are guaranteed to have such a conditional expectation $\mathbb{E}_{M^{ G }}$.} of $M$ onto the fixed point algebra $M^{G}$ such that $\varphi \circ \mathbb{E}_{M^{ G }} =\varphi$.
\end{proposition}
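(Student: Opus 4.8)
The plan is to derive the statement from the Alaoglu--Birkhoff mean ergodic theorem applied to the implementing representation $\pi_{V}$ on $L^{2}(M,\varphi)$, after transferring the weak$^{*}$ geometry of $K(x)$ inside $M$ to the weak $L^{2}$ geometry. Throughout I would write $\Lambda:M\to L^{2}(M,\varphi)$ for the map $\Lambda(b)=b\Omega_{\varphi}$, recall from \S2 that $V_{g}(b\Omega_{\varphi})=\beta_{g}(b)\Omega_{\varphi}$ with $V_{g}\Omega_{\varphi}=\Omega_{\varphi}$, and let $P$ denote the orthogonal projection of $L^{2}(M,\varphi)$ onto the subspace of vectors fixed by every $V_{g}$.

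First I would dispatch the easy inclusion $K(x)\cap M^{G}\subseteq\{\mathbb{E}_{M^{G}}(x)\}$ using only that $\mathbb{E}_{M^{G}}$ is a normal, hence weak$^{*}$--continuous, affine map. Because $\beta_{g}$ preserves $\varphi$, the map $\beta_{g}\circ\mathbb{E}_{M^{G}}\circ\beta_{g}^{-1}$ is again a $\varphi$--preserving conditional expectation onto $\beta_{g}(M^{G})=M^{G}$, so by the uniqueness in \cite{Ta} it equals $\mathbb{E}_{M^{G}}$; since $\beta_{g}$ fixes $M^{G}$ pointwise and $\mathbb{E}_{M^{G}}(x)\in M^{G}$, this yields $\mathbb{E}_{M^{G}}\circ\beta_{g}=\mathbb{E}_{M^{G}}$ for every $g$. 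Any $a\in K(x)$ is a weak$^{*}$--limit of a net of convex combinations $\sum_{i}\lambda_{i}\beta_{g_{i}}(x)$, each of which $\mathbb{E}_{M^{G}}$ sends to the single point $\mathbb{E}_{M^{G}}(x)$; by weak$^{*}$--continuity $\mathbb{E}_{M^{G}}(a)=\mathbb{E}_{M^{G}}(x)$. Hence if moreover $a\in M^{G}$ then $a=\mathbb{E}_{M^{G}}(a)=\mathbb{E}_{M^{G}}(x)$, which is the claimed uniqueness, and it simultaneously identifies whatever element the intersection contains.

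The substance is existence, i.e. producing an element of $K(x)\cap M^{G}$, and this I would obtain by averaging in $L^{2}$. The key transfer step is that $\Lambda$ is continuous from bounded subsets of $M$ with the weak$^{*}$--topology into $L^{2}(M,\varphi)$ with its weak topology: on a ball this follows, since $\norm{b\Omega_{\varphi}}_{\varphi}\le\norm{b}$ and $M\Omega_{\varphi}$ is dense, from the normality of the functionals $b\mapsto\varphi(c^{*}b)=\langle b\Omega_{\varphi},c\Omega_{\varphi}\rangle$. As $K(x)$ is weak$^{*}$--compact and convex, its image $K(x)\Omega_{\varphi}$ is weakly compact and convex, hence weakly closed, and it contains every $V_{g}(x\Omega_{\varphi})=\beta_{g}(x)\Omega_{\varphi}$; therefore it contains the $\norm{\cdot}_{\varphi}$--closed convex hull $C$ of the orbit $\{V_{g}(x\Omega_{\varphi}):g\in G\}$. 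Now Alaoglu--Birkhoff, which needs only that $\{V_{g}\}$ is a group of unitaries (so no amenability or continuity hypothesis on $G$), gives that $C$ has a unique minimal--norm element, namely the fixed vector $P(x\Omega_{\varphi})$, and that $P$ collapses all of $C$ to this point. In particular $P(x\Omega_{\varphi})\in C\subseteq K(x)\Omega_{\varphi}$, so there is $m(x)\in K(x)$ with $m(x)\Omega_{\varphi}=P(x\Omega_{\varphi})$; from $V_{g}P(x\Omega_{\varphi})=P(x\Omega_{\varphi})$ and the separating property of $\Omega_{\varphi}$ we get $\beta_{g}(m(x))=m(x)$, so $m(x)\in K(x)\cap M^{G}$. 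The uniqueness of the previous paragraph then forces $m(x)=\mathbb{E}_{M^{G}}(x)$, completing the proof.

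The main obstacle is exactly the interface between the two topologies together with the invocation of the mean ergodic theorem: the average producing $\mathbb{E}_{M^{G}}(x)$ must be manufactured in the Hilbert space, where the self--dual geometry yields a canonical fixed minimal--norm vector even for a non--amenable $G$, and one must verify that this $L^{2}$--average is represented by an operator in $M$ that actually lies in the weak$^{*}$--closed convex hull $K(x)$. Once the continuity of $\Lambda$ on bounded sets and the weak compactness of $K(x)$ are in place this is clean; the normality and module properties of $\mathbb{E}_{M^{G}}$, and hence the uniqueness half, are routine.
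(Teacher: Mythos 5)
Your proof is correct, but it takes a genuinely different route from the paper's. The paper disposes of the proposition almost entirely by citation: since $\varphi\circ\beta_g=\varphi$ for all $g$, the algebra $M$ is $G$--finite in the sense of \cite{KoSz}, so Theorem 1 of \cite{KoSz} gives directly that $K(x)\cap M^{G}$ is a singleton $\{\Phi(x)\}$, Theorem 2 of \cite{KoSz} gives that $x\mapsto\Phi(x)$ is a normal $G$--equivariant norm--one projection, and the identification $\Phi=\mathbb{E}_{M^{G}}$ then follows from \cite{Ta}. You instead reprove the substance from scratch: uniqueness via the $G$--invariance $\mathbb{E}_{M^{G}}\circ\beta_g=\mathbb{E}_{M^{G}}$ (itself a consequence of Takesaki's uniqueness applied to $\beta_g\circ\mathbb{E}_{M^{G}}\circ\beta_g^{-1}$) together with normality, and existence by pushing $K(x)$ into $L^{2}(M,\varphi)$ via $b\mapsto b\Omega_{\varphi}$, invoking Alaoglu--Birkhoff to place the fixed vector $P(x\Omega_{\varphi})$ inside the image, and pulling it back to an element of $K(x)\cap M^{G}$ using that $\Omega_{\varphi}$ is separating. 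All the delicate points are handled: the weak$^{*}$--to--weak continuity of $b\mapsto b\Omega_{\varphi}$ on bounded sets, the weak compactness of $K(x)$, and the fact that the mean ergodic theorem needs no amenability of $G$. What each approach buys: the paper's is shorter and yields as a byproduct the structural statement that the averaging map is a normal equivariant projection (which is exactly what is used in the proof of Theorem \ref{OtherwayWmDisjointCompact}); yours is self--contained modulo the existence of $\mathbb{E}_{M^{G}}$ (which the statement's footnote already grants), and it is in fact the same $2$--norm averaging technique the paper itself sketches in the remark following Theorem \ref{Ergodic}, so it fits the paper's toolkit naturally. Note only that your argument, unlike the Kov\'{a}cs--Sz\"{u}cs route, takes the existence of the Takesaki expectation as input rather than constructing the projection from the average itself; since the proposition presupposes that existence, this is harmless here.
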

 
\begin{proof}
Since $\varphi\circ\beta_{g}=\varphi$ for all $g\in  G $, 
$M$ is $ G $--finite \footnote{The collection of normal states invariant with respect to the $G$-action separates the positives of $M$.} $($see Proposition 1 and Defn. 1 of \cite{KoSz} for details$)$. 
Thus, by Theorem $1$ of \cite{KoSz} it follows that $K(x)\cap M^{ G }$
is a singleton set $\{\Phi(x)\}$. Furthermore, the map $x\mapsto \Phi(x)$ is a normal $ G $--equivariant projection of norm one by Theorem $2$ of \cite{KoSz}. By \cite{Ta} $($see pp. 310--311$)$, it follows that $\Phi(x)=\mathbb{E}_{M^{ G }}(x)$.
\end{proof}

The following lemma guarantees that a finite--dimensional invariant subspace associated to a trace--preserving group action on a finite von Neumann algebra induces a compact subsystem.  

\begin{lemma}\label{compactness}
Let $M$ be a finite von Neumann algebra in standard form with respect to a fixed faithful 
normal tracial state $\tau$. Let $G$ be a separable locally compact group with action $\gamma: g\mapsto  \gamma _{g}\in Aut(M,\tau)$. 
If $F\subseteq M$ is a finite--dimensional subspace such that $\gamma_{g}(F)\subseteq F$ for all $g \in G$ then the restriction of the action of $G$ to $N=vN(F \cup \{1\})$ defines a compact subsystem of $\mathfrak{M}=(M,\tau, \gamma, G)$.
\end{lemma}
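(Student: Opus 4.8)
The plan is to verify separately that $\mathfrak{N}=(N,\tau|_N,\gamma|_N,G)$ is a genuine subsystem and that its action is compact. For the subsystem claim, first note that since each $\gamma_g$ is an automorphism with $\gamma_g(F)\subseteq F$, applying $\gamma_{g^{-1}}$ forces $\gamma_g(F)=F$, and since $\gamma_g$ is a $\ast$-map we also get $\gamma_g(F^\ast)=\gamma_g(F)^\ast=F^\ast$. Hence $\gamma_g$ fixes the generating set $F\cup F^\ast\cup\{1\}$ setwise and, being normal, carries $N=(F\cup F^\ast\cup\{1\})''$ onto itself; thus $N$ is globally $G$-invariant. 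Because $\tau$ is a trace, the modular automorphism group $\sigma_t^\tau$ is trivial, so $N$ is automatically $\sigma^\tau$-invariant. By the discussion preceding Definition \ref{Subsystem} (via Takesaki's theorem \cite{Ta}) there is then a unique faithful normal $\tau$-preserving conditional expectation of $M$ onto $N$, and the inclusion furnishes an injective graph joining, so $\mathfrak{N}$ is indeed a subsystem of $\mathfrak{M}$.

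For compactness I would pass to the GNS picture. Let $W_g\in\mathbf{B}(L^{2}(M,\tau))$ be the canonical unitary implementing $\gamma_g$ and fixing $\Omega_\tau$, so that $W_g(x\Omega_\tau)=\gamma_g(x)\Omega_\tau$ and $\gamma_G(x)\Omega_\tau=\{W_g(x\Omega_\tau):g\in G\}$. By Definition \ref{WM}$(ii)$ it suffices to show that every vector $x\Omega_\tau$ with $x\in N$ lies in the \emph{almost periodic part}
\begin{align*}
\mathcal{H}_c=\{\xi\in L^{2}(M,\tau): \{W_g\xi:g\in G\}\text{ is totally bounded in }\norm{\cdot}_\tau\}.
\end{align*}
A routine argument, using that the $W_g$ are isometries and that sums of compact sets are compact, shows that $\mathcal{H}_c$ is a closed, $W$-invariant subspace of $L^{2}(M,\tau)$.

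The key step is to produce a large supply of vectors in $\mathcal{H}_c$ at the \emph{operator-norm} level, where multiplication is well behaved, and only then to transfer back by density. Let $M_c=\{x\in M: \gamma_G(x)\text{ is precompact in }\norm{\cdot}\}$. Since $\norm{\gamma_g(x)}=\norm{x}$, any $x\in F$ has bounded orbit inside the finite-dimensional space $F$, hence precompact orbit; the same holds for $F^\ast$ and for $1$. Moreover $M_c$ is closed under the $\ast$-algebra operations: the orbit of a sum, product, scalar multiple, or adjoint of elements of $M_c$ is contained in the continuous image of a compact set (a sum or product of the relevant orbit-closures, or the adjoint of one) under a map that is continuous on bounded sets, and is therefore precompact. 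Consequently the $\ast$-algebra $A_0$ generated by $F\cup\{1\}$ satisfies $A_0\subseteq M_c$. Because $\norm{a}_\tau\le\norm{a}$, the contractive map $a\mapsto a\Omega_\tau$ sends operator-norm precompact sets to $\norm{\cdot}_\tau$-precompact sets, so $A_0\Omega_\tau\subseteq\mathcal{H}_c$. Finally, $A_0$ is strongly dense in $A_0''=N$, so by Kaplansky density each $x\in N$ is the strong limit of a bounded net in $A_0$; evaluating at the cyclic vector $\Omega_\tau$ gives $x\Omega_\tau\in\overline{A_0\Omega_\tau}^{\,\norm{\cdot}_\tau}\subseteq\mathcal{H}_c$. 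Hence $\gamma_G(x)\Omega_\tau$ is totally bounded for every $x\in N$, and the subsystem is compact.

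The main obstacle is exactly this passage from the generators to the weak closure $N$. Total boundedness of orbits in $\norm{\cdot}_\tau$ is \emph{not} obviously stable under products, since multiplication is not jointly continuous for the $L^2$-norm; this is why the argument must temporarily retreat to the operator norm, where $M_c$ is genuinely a $C^\ast$-algebra and products are controlled, before returning to $L^2$ via the inequality $\norm{\cdot}_\tau\le\norm{\cdot}$ together with Kaplansky density. Everything else amounts to bookkeeping about compact sets in metric spaces.
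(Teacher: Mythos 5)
Your proof is correct, and while it shares the paper's overall skeleton (the generators have precompact orbits, the generated $\ast$-algebra inherits this, and density transfers it to $N$), the key mechanism is genuinely different. The paper works entirely in the $2$-norm on $M$: it observes that norm-closed bounded subsets of $M$ are $\norm{\cdot}_{\tau}$-closed, handles adjoints via the isometry $J_{\tau}$ (using $J_{\tau}(a\Omega_{\tau})=a^{\ast}\Omega_{\tau}$, a tracial fact), and controls products of $\norm{\cdot}_{\tau}$-convergent bounded sequences by an estimate that also needs traciality, namely $\norm{ay}_{\tau}\le\norm{a}_{\tau}\norm{y}$; it then finishes with Kaplansky density plus a diagonalization over subsequences. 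You instead do all the multiplicative work at the operator-norm level, where the set $M_{c}$ of elements with precompact orbit is automatically a unital $C^{\ast}$-algebra --- the same algebra $M_{\mathfrak{Cpt}}$ that the paper itself invokes, via \cite{HLS}, in the proof of Theorem \ref{InvaraintSubspaceInsideCentralizer} --- and only then transfer to $L^{2}$ through the contraction $a\mapsto a\Omega_{\tau}$, concluding with the closedness of the almost-periodic subspace $\mathcal{H}_{c}$. Your route buys two things: it nowhere uses traciality in the compactness argument, so that assertion holds verbatim for a general faithful normal invariant state, and it replaces the sequential diagonalization by the soft fact that $\mathcal{H}_{c}$ is a closed invariant subspace (indeed, once $A_{0}\Omega_{\tau}\subseteq\mathcal{H}_{c}$, plain strong density of $A_{0}$ in $N$ already suffices, since evaluation at $\Omega_{\tau}$ is SOT-to-$\norm{\cdot}_{\tau}$ continuous, so Kaplansky's boundedness is not even needed there). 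What the paper's route buys is that it stays intrinsic to the $2$-norm geometry of the finite von Neumann algebra, never leaving $M$ and its standard form. Your explicit verification of the subsystem structure ($\gamma_{g}(F)=F$, hence global $G$-invariance of $N$; modular invariance for free since $\tau$ is a trace; Takesaki's expectation and the inclusion as injective graph joining) is also sound, and is more detailed than the paper's one-line remark that $N$ is $G$-invariant.
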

 
\begin{proof}
Note that for each $x\in F$ the closure of the orbit $\{\gamma _{g}(x)\Omega_{\tau}\}_{g\in  G }$ is a precompact subset of $L^{2}(M,\tau)$  
$($where $\Omega_{\tau}$ is the standard vacuum vector$)$. 
Let $F^{*}=\{x^{*}: x\in F\}$. Then $\mathcal{A}_{0}=\text{span }(F\cup F^{*})$ is a self--adjoint 
finite--dimensional  subspace of $M$ which is $ G $--invariant. Let $\mathcal{A}$ be the 
$\ast$--algebra generated by $\mathcal{A}_{0}$. Note that for $x\in F$, and $g_{n} \in G$  
\begin{align*}
\gamma_{g _{n}}(x)\Omega_{\tau}\underset{n \rightarrow \infty}{\xrightarrow{\norm{\cdot}_{\tau}}} \xi\in L^{2}(M,\tau) \Leftrightarrow  \gamma_{g _{n}}(x^{*})\Omega_{\tau}\underset{n \rightarrow \infty}{\xrightarrow{\norm{\cdot}_{\tau}}} J_{\tau}\xi\in L^{2}(M,\tau).
\end{align*}
Passing to subseqences, we see that the orbit $\{ \gamma _{g}(x)\Omega_{\tau}\}_{g\in  G }$ of each 
$x\in \mathcal{A}_{0}$ is a precompact subset of $L^{2}(M,\tau)$. Clearly, 
$\mathcal{A}$ is $ G $--invariant, and so is $N$. 

The closed unit ball of $M$ $($and hence any norm--closed, bounded subset of $M$) is closed in $\norm{\cdot}_{\tau}$. Thus, if $x\in M$ and $\{\gamma_{g _{n}}(x)\}_n$ is Cauchy in $\norm{\cdot}_{\tau}$, then the limit in $\norm{\cdot}_{\tau}$ of $ \{\gamma_{g _{n}}(x)\Omega_{\tau}\}_{n}$ lies in $M\Omega_{\tau}$, and an easy triangle inequality argument shows that if $\gamma_{g _{n}}(x)\rightarrow x_{0}$ and $ \gamma_{g _{n}}(y)\rightarrow y_{0}$ in $\norm{\cdot}_{\tau}$, then 
$\gamma_{g _{n}}(xy)\rightarrow x_{0}y_{0}$ in $\norm{\cdot}_{\tau}$ for all $x,y\in M$. A routine argument involving passing to subsequences finitely many times establishes that for each $x\in\mathcal{A}$ the orbit $\{ \gamma _{g}(x)\Omega_{\tau}\}_{g\in  G }$ is a sequentially precompact subset of $L^{2}(M,\tau)$ and hence a precompact subset of $L^{2}(M,\tau)$. The precompactness of $\{\gamma _{g}(y)\Omega_{\tau}\}_{g\in  G }$ for $y\in N$ follows from the Kaplansky density theorem and a standard diagonalization argument.\end{proof}

Lemma \ref{compactness} highlights a  strong relationship between weakly mixing and compact systems arising from the action of a separable, locally compact group. If $\mathfrak{M}$ is an ergodic system which is not weakly mixing, then by the remark following Definition \ref{WM}, the representation $\pi_{V}$ of $G$ on $L^{2}(M,\varphi)$ admits a finite--dimensional invariant subspace $\mathcal{K} \subseteq L^{2}(M,\varphi) \ominus \mathbb{C}\Omega_\varphi$. It follows from Theorem \ref{InvaraintSubspaceInsideCentralizer} that there is a finite--dimensional invariant subspace $F \subseteq M$ such that $\mathcal{K}=F\Omega_{\varphi}$ and $\beta_{g}(F)\subseteq F$ for all $g\in G$. Moreover, $F\subseteq M^{\varphi}$, so Lemma \ref{compactness} applies, and we see that the action of $G$ restricted to the von Neumann algebra $N=vN(F\cup \{1\})\subseteq M^{\varphi}$ defines a nontrivial compact subsystem of the original system $\mathfrak{M}$.  These observations complete the proof of the following result. 

\begin{theorem}\label{CharacterizeWeakMixing} For an ergodic $W^{*}$--dynamical system, the following are equivalent:
\begin{enumerate}
\item The system is weakly mixing.
\item The system admits no nontrivial compact subsystem.
\item The system is disjoint from every compact system.
\end{enumerate}\end{theorem}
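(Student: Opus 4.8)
The plan is to prove the three conditions equivalent through the cyclic chain $(1)\Rightarrow(3)\Rightarrow(2)\Rightarrow(1)$, exploiting the fact that each arrow is either already established or follows in a line or two from the machinery above. The arrow $(1)\Rightarrow(3)$ demands no new work: it is exactly Theorem \ref{WmDisjointCompact}, which asserts that a weakly mixing system is disjoint from any compact system.

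For $(3)\Rightarrow(2)$ I would argue the contrapositive. Suppose $\mathfrak{M}$ has a nontrivial compact subsystem $\mathfrak{N}$ with underlying algebra $N\neq\mathbb{C}1_M$. By Definition \ref{Subsystem} this comes with an injective graph joining $\iota\in J_m(\mathfrak{N},\mathfrak{M})$, i.e.\ a unital $\ast$-homomorphism. Since $N\neq\mathbb{C}1_M$, the range of $\iota$ has dimension greater than one, so $\iota$ cannot equal the trivial joining, whose range is $\mathbb{C}1_M$. Thus $J_m(\mathfrak{N},\mathfrak{M})$ strictly contains the trivial joining; as disjointness is symmetric (cf.\ the remark following Definition \ref{DisjointDefinition} and Theorem \ref{AdjointisEquivariant}), $\mathfrak{M}$ is not disjoint from the compact system $\mathfrak{N}$, and so $(3)$ fails.

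The arrow $(2)\Rightarrow(1)$ is precisely the content of the paragraph preceding the theorem, again in contrapositive form. If the ergodic system $\mathfrak{M}$ is not weakly mixing, then by the remark following Definition \ref{WM} the representation $\pi_V$ admits a finite-dimensional invariant subspace $\mathcal{K}\subseteq L^{2}(M,\varphi)\ominus\mathbb{C}\Omega_\varphi$. Theorem \ref{InvaraintSubspaceInsideCentralizer} (which is where ergodicity enters) produces a finite-dimensional $G$-invariant subspace $F\subseteq M^\varphi$ with $\mathcal{K}=F\Omega_\varphi$; because $\mathcal{K}$ is nonzero and orthogonal to $\mathbb{C}\Omega_\varphi$, the subspace $F$ contains a nonscalar element, whence $N=vN(F\cup\{1\})\neq\mathbb{C}1_M$. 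Lemma \ref{compactness} then identifies the restriction of $\beta$ to $N\subseteq M^\varphi$ as a nontrivial compact subsystem, contradicting $(2)$.

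No step presents a genuine obstacle, since all of the analytic difficulty is already absorbed into Theorem \ref{WmDisjointCompact} and, above all, Theorem \ref{InvaraintSubspaceInsideCentralizer}. The only point requiring care is the nontriviality bookkeeping shared by the last two arrows: one must verify that the subsystem manufactured is strictly larger than the scalars, so that the associated graph joining is strictly larger than the trivial one. This is guaranteed by the injectivity of $\iota$ in $(3)\Rightarrow(2)$ and by the orthogonality $\mathcal{K}\perp\mathbb{C}\Omega_\varphi$ in $(2)\Rightarrow(1)$.
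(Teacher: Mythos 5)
Your proof is correct and follows essentially the same route as the paper: the implication $(1)\Rightarrow(3)$ is Theorem \ref{WmDisjointCompact}, the contrapositive of $(2)\Rightarrow(1)$ is exactly the paper's argument combining the remark after Definition \ref{WM}, Theorem \ref{InvaraintSubspaceInsideCentralizer}, and Lemma \ref{compactness}, and your $(3)\Rightarrow(2)$ step (a nontrivial compact subsystem yields, via its injective graph joining, a nontrivial joining with a compact system) is the easy link the paper leaves implicit. Your attention to the nontriviality bookkeeping is a welcome bit of added care, but nothing in your argument differs in substance from the paper's.
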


In the following result, we obtain further information about the compact systems with which an ergodic, non-weakly mixing system admits a nontrivial joining.  In particular, the underlying von Neumann algebra of at least one such compact system is either a type I factor or is abelian. 

\begin{theorem}\label{OtherwayWmDisjointCompact}
Let $\mathfrak{M}$ be an ergodic system such that the representation $\pi_{V}$ has a nonzero finite--dimensional reducing subspace orthogonal to $\Omega_{\varphi}$. Then 
there is an ergodic compact system $\mathfrak{N}$ whose underlying von Neumann algebra $N$ is finite--dimensional or abelian, and a joining $\Psi\in J_{m}(\mathfrak{M},\mathfrak{N})$ such that $\Psi\neq \varphi(\cdot)1_{N}$. 
\end{theorem}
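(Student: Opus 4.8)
The plan is to use the failure of weak mixing to extract a finite--dimensional $G$--irreducible subspace living inside the centralizer, and to build the required ergodic compact system together with a nontrivial joining from it, treating a ``matrix'' case and a ``character'' case separately.

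First I would set up the data. Since $\pi_V$ admits a nonzero finite--dimensional reducing (equivalently, invariant) subspace orthogonal to $\Omega_\varphi$, Theorem~\ref{InvaraintSubspaceInsideCentralizer} places it inside $M^\varphi\Omega_\varphi$. Because a finite--dimensional unitary representation is completely reducible, I would fix one $G$--irreducible summand $\mathcal{K}_0$, which is again orthogonal to $\Omega_\varphi$, and write $\mathcal{K}_0=\mathrm{span}\{a_1\Omega_\varphi,\dots,a_{d_0}\Omega_\varphi\}$ with $a_j\in M^\varphi$ and $\varphi(a_i^\ast a_j)=\delta_{ij}$. The canonical implementation then gives $\beta_g(a_j)=\sum_i \pi_0(g)_{ij}\,a_i$ for a continuous irreducible unitary representation $\pi_0$ of $G$ of some dimension $d_0$ into $M_{d_0}(\mathbb{C})$. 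The argument splits according to whether $d_0\ge 2$ or $d_0=1$.

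In the case $d_0\ge 2$ I would take $\mathfrak{N}=(M_{d_0}(\mathbb{C}),tr,\mathrm{Ad}\,\pi_0,G)$, where $tr$ is the normalized trace and $(\mathrm{Ad}\,\pi_0)_g=\mathrm{Ad}(\pi_0(g))$. This system is finite--dimensional (hence compact) and ergodic, since its fixed--point algebra is $\pi_0(G)'\cap M_{d_0}(\mathbb{C})=\mathbb{C}1$ by Schur's lemma. For the joining I would set $\Psi(x)=[\varphi(a_i^\ast x a_j)]_{ij}$; for the isometry $V\colon\mathbb{C}^{d_0}\to L^2(M,\varphi)$, $e_j\mapsto a_j\Omega_\varphi$, one has $\Psi(x)=V^\ast x V$ (with $M$ acting by left multiplication), which is manifestly u.c.p. and unital. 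The crucial point is the identity $\sum_i a_i a_i^\ast=d_0\,1$: the left side is $G$--invariant because $\pi_0(g)$ is unitary, hence scalar by ergodicity of $\mathfrak{M}$, and applying $\varphi$ fixes the scalar as $d_0$. This gives $tr\circ\Psi=\varphi$, invariance of $\Psi$ under $\sigma_t^\varphi$ is immediate from $a_j\in M^\varphi$, and a direct computation using $\beta_{g^{-1}}(a_j)=\sum_l\overline{\pi_0(g)_{jl}}\,a_l$ yields $\Psi\circ\beta_g=\mathrm{Ad}(\pi_0(g))\circ\Psi$, so $\Psi\in J_m(\mathfrak{M},\mathfrak{N})$. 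To see $\Psi$ is nontrivial I would argue by contradiction: if $\Psi(x)=\varphi(x)1$ for all $x$, then $\varphi(a_i^\ast x a_j)=\varphi(x)\delta_{ij}$ forces $a_ja_i^\ast=0$ for $i\ne j$ and $a_ja_j^\ast=1$; since $M^\varphi$ is finite each $a_j$ would then be unitary, incompatible with $a_ja_i^\ast=0$ once $d_0\ge 2$.

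In the case $d_0=1$ the matrix construction degenerates to $\mathbb{C}$, so I would instead invoke Theorem~\ref{eigencase}: the character eigenvector provides a unitary $u\in M^\varphi$ with $\varphi(u)=0$ and $\beta_g(u)=\langle g,\chi\rangle u$. Then $vN(u)\subseteq M^\varphi$ is globally invariant under $G$ and under $\{\sigma_t^\varphi\}$, abelian, compact (apply Lemma~\ref{compactness} with $F=\mathbb{C}u$ inside the finite von Neumann algebra $M^\varphi$), and ergodic, being a $G$--invariant subalgebra of the ergodic $\mathfrak{M}$; concretely it is isomorphic to $L^\infty(S^1)$, or to a finite--dimensional $\mathbb{C}^p$ when $\chi$ has finite order $p$. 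Taking $\mathfrak{N}=(vN(u),\varphi|_{vN(u)},\beta|_{vN(u)},G)$, the unique $\varphi$--preserving conditional expectation $\mathbb{E}_{vN(u)}\colon M\to vN(u)$ lies in $J_m(\mathfrak{M},\mathfrak{N})$ by Theorem~\ref{ConditionalExpectationisMarkov}, and is nontrivial since $\mathbb{E}_{vN(u)}(u)=u\ne 0=\varphi(u)1$.

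I expect the main obstacle to be the matrix case: recognizing $\mathrm{Ad}\,\pi_0$ on $M_{d_0}(\mathbb{C})$ as the correct ergodic compact target and verifying that $\Psi(x)=[\varphi(a_i^\ast x a_j)]$ is genuinely a joining. The heart of this is the normalization $\sum_i a_i a_i^\ast=d_0\,1$, which forces $\Psi$ to be trace--preserving, together with the finiteness argument securing nontriviality; the equivariance and modular invariance are then routine.
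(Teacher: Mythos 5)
Your proof is correct, and its overall architecture is the paper's: an irreducible finite--dimensional invariant subspace inside $M^{\varphi}\Omega_{\varphi}$ produces a matrix--algebra target carrying the adjoint action and the normalized trace, the joining is the compression onto that subspace, and the degenerate one--dimensional case is handled exactly as in the paper via Theorem \ref{eigencase}, the abelian algebra $vN(u)$, and the conditional expectation $\mathbb{E}_{vN(u)}$. In fact your $\Psi(x)=[\varphi(a_i^{\ast}xa_j)]$ coincides with the paper's $\pi_{\varrho}(P_{\mathcal{K}}\mathbb{E}_{M^{\varphi}}(x)P_{\mathcal{K}})$, because $a_i,a_j\in M^{\varphi}$ and the module property of $\mathbb{E}_{M^{\varphi}}$ give $\varphi(a_i^{\ast}\mathbb{E}_{M^{\varphi}}(x)a_j)=\varphi(a_i^{\ast}xa_j)$. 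Where you genuinely differ is in the two key verifications, and both of your arguments are more elementary. For state--preservation, the paper invokes the Kov\'{a}cs--Sz\"{u}cs--Takesaki averaging result (Proposition \ref{average}), while you use the Schur--orthogonality--type identity $\sum_i a_ia_i^{\ast}=d_0 1_M$, which ergodicity forces because that sum is $G$--invariant. For nontriviality when $d_0\geq 2$, the paper's Step 3 is a Tomita--Takesaki argument: it shows that triviality would make every unit vector of $\mathcal{K}$ of the form $u\Omega_{\varphi}$ with $u$ a trace--zero unitary in $M^{\varphi}$ (via an isometry in $(M^{\varphi})^{\prime}$ of the form $\tilde{J}_{\varphi}u^{\ast}\tilde{J}_{\varphi}$), and then derives a contradiction from two orthogonal such vectors; you instead observe that triviality forces $a_ja_i^{\ast}=\delta_{ij}1_M$ by faithfulness of $\varphi$, so each $a_j$ is a co--isometry, hence a unitary since $M^{\varphi}$ is finite, contradicting $a_ja_i^{\ast}=0$ for $i\neq j$. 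Your upfront split into the cases $d_0\geq 2$ and $d_0=1$, versus the paper's post hoc analysis of when $\Psi$ is trivial, is purely organizational. The trade--off: your route is shorter and keeps both crucial properties (Markov and nontrivial) as transparent algebraic consequences of ergodicity plus finiteness of the centralizer, once Theorem \ref{InvaraintSubspaceInsideCentralizer} is granted; the paper's longer Step 3 extracts extra structure (unit eigenvectors come from trace--zero unitaries) that it reuses elsewhere, e.g.\ in Corollary \ref{NontrivialCentralizer}.
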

 
\begin{proof}
The proof has three steps. In the first step, a suitable ergodic compact system is constructed from the initial data pertaining to the finite dimensional reducing subspace of the representation $\pi_{V}$. In the next step, 
a natural joining is established between $\mathfrak{M}$ and the compact system constructed in the first step. The joining obtained could be trivial, but in the final step the triviality of this joining allows us to obtain a new non trivial joining of $\mathfrak{M}$ with a suitable ergodic compact system.\\
\noindent \underline{\textbf{Step 1}}: 
Let $\mathcal{K}\subseteq L^{2}(M,\varphi)\ominus \mathbb{C}\Omega_{\varphi}$ 
be a nonzero finite--dimensional reducing subspace for $\pi_{V}$.  Let $P_\mathcal{K}: L^{2}(M,\varphi)\rightarrow \mathcal{K}$ 
denote the orthogonal projection. Then $G\ni g\overset{\pi_{\mathcal{K}}}\mapsto V_{g}P_{\mathcal{K}}\in\mathcal{U}(\mathbf{B}(\mathcal{K}))$ is a finite--dimensional representation of $G$ which thus decomposes into a direct sum of irreducible representations. Replacing $\mathcal{K}$ by one of the Hilbert subspaces that appear in this direct sum, we may, without loss of generality, assume that 
$\pi_{\mathcal{K}}$ is irreducible and hence that every nonzero vector in $\mathcal{K}$ is cyclic for $\pi_{\mathcal{K}}$. Let $\xi\in \mathcal{K}$ be a unit vector, and denote by $K$ the compact subgroup $\overline{\{V_{g}P_\mathcal{K}:g\in  G \}}^{\norm{\cdot}}$  of the unitary group of  $\textbf{B}(\mathcal{K})$.  Note that $K$ acts continuously on $\textbf{B}(\mathcal{K})$ by conjugation, i.e. $\alpha_{k}(y)=kyk^{*}$ for all $y\in \textbf{B}(\mathcal{K})$ and $k\in K$.
Taking $N=vN(\{k:k\in K\})=\textbf{B}(\mathcal{K})$, we see that $G$ acts on $N$ by $g\mapsto Ad(V_{g}P_{\mathcal{K}})$,  $g\in G$, and consequently, the action of $G$ on $N$ is ergodic. Let $\lambda$ denote the normalized Haar measure of $K$. Then the 
functional 
\begin{align}\label{Tracial}
\varrho(y)=\int_{K}\langle kyk^{*}\xi,\xi\rangle_{\varphi}d\lambda(k), \text{ }y\in N,
\end{align}
is invariant with respect to  $\alpha_{k}$ for all $k\in K$, since $\lambda$ is translation 
invariant. Thus,  $\varrho$ is the faithful normal tracial state on $N$ (Note that $\varrho$ need not be the vector state induced by $\xi$, and in fact is independent of the choice of $\xi$). 
Let $\mathfrak{N}=(N,\varrho,\alpha, G )$. By construction, $\mathfrak{N}$ is an ergodic, 
compact system. The associated action on the GNS representation will be denoted by 
$\alpha$, as well. This completes the first step. 

\noindent \underline{\textbf{Step 2}}: Let $\pi_{\varrho}$ denote the GNS representation of $N$ with 
respect to the tracial state $\varrho$. Thus, 
\begin{align}\label{TP3}
\varrho(y)=\int_{K}\langle kyk^{*}\xi,\xi\rangle_{\varphi}d\lambda(k)=
\langle \pi_{\varrho}(y) \Omega_{\varrho},\Omega_{\varrho}\rangle_{\varrho}, \text{ for all }y\in N, 
\end{align}
where $\Omega_{\varrho}$ denotes the distinguished cyclic vector in the GNS representation. 
Note that as $\varrho$ is tracial, $\sigma_{t}^{\varrho}=id$ for all $t\in\mathbb{R}$.  
Given $x\in M$, note that 
$\sigma_{t}^{\varphi}(x)=x$ for all $t\in \mathbb{R}$ 
if and only if $x\in M^{\varphi}$. Let $\mathbb{E}_{M^{\varphi}}$ denote the unique 
$\varphi$--preserving normal conditional expectation from $M$ onto $M^{\varphi}$, whose existence 
is guaranteed by \S3 \cite{Ta}. Define
\begin{align}\label{ConstructJoin}
\Psi: M\rightarrow N \text{ by }\Psi(x)=\pi_{\varrho}(P_{\mathcal{K}}\mathbb{E}_{M^{\varphi}}(x) P_{\mathcal{K}}), \text{ }x\in M.
\end{align}

Clearly, $\Psi$ is linear, u.c.p. and normal.  We now show that $\Psi$ has the desired properties. 
Note that for each $t\in\mathbb{R}$,
\begin{align*}
\Psi(\sigma_{t}^{\varphi}(x))&= \pi_{\varrho}(P_{\mathcal{K}}\mathbb{E}_{M^{\varphi}}(\sigma_{t}^{\varphi}(x)) P_{\mathcal{K}})\\
&=\pi_{\varrho}(P_{\mathcal{K}}\sigma_{t}^{\varphi}(\mathbb{E}_{M^{\varphi}}(x)) P_{\mathcal{K}}) \text{ (by the construction in \S3, \S4 \cite{Ta})}\\
&=\pi_{\varrho}(P_{\mathcal{K}}\mathbb{E}_{M^{\varphi}}(x) P_{\mathcal{K}})\\
&=\Psi(x)\\
&=\sigma_{t}^{\varrho}(\Psi(x)), \text{ }x\in M.
\end{align*}

Note that $\beta_{g}=Ad(V_{g})$ for $g\in  G $. By the uniqueness of $\mathbb{E}_{M^\varphi}$, we have 
$\beta_{g}\circ\mathbb{E}_{M^{\varphi}}=\mathbb{E}_{M^{\varphi}}\circ\beta_{g}$ for all 
$g\in  G $. Thus, for $x\in M$ and $g\in  G $, 
\begin{align*}
\Psi(\beta_{g}(x))&=\pi_{\varrho}(P_{\mathcal{K}}\mathbb{E}_{M^{\varphi}}(\beta_{g}(x)) P_{\mathcal{K}})\\
&=\pi_{\varrho}(P_{\mathcal{K}}\beta_{g}(\mathbb{E}_{M^{\varphi}}(x)) P_{\mathcal{K}})\\
&=\pi_{\varrho}(P_{\mathcal{K}}V_{g}\mathbb{E}_{M^{\varphi}}(x) V_{g}^{*}P_{\mathcal{K}})\\
&=\pi_{\varrho}(V_{g}P_{\mathcal{K}}\mathbb{E}_{M^{\varphi}}(x) P_{\mathcal{K}}V_{g}^{*}) \text{ (since }V_{g}P_{\mathcal{K}}=P_{\mathcal{K}}V_{g})\\
&=\pi_{\varrho}(V_{g}P_{\mathcal{K}})\pi_{\varrho}(P_{\mathcal{K}}\mathbb{E}_{M^{\varphi}}(x) P_{\mathcal{K}})\pi_{\varrho}(V_{g}P_{\mathcal{K}})^{*} \text{(as }\pi_{\varrho} \text{ is a }\ast\text{--homo.)}\\
&=\pi_{\varrho}(V_{g}P_{\mathcal{K}})\Psi(x)\pi_{\varrho}(V_{g}P_{\mathcal{K}})^{*}\\
&=\alpha_{g}(\Psi(x)).
\end{align*}

As $\Psi$ is $ G $--equivariant, for $x\in M$ we have for all $g\in G$ that
\begin{align*}
\varrho(\Psi(x))=\varrho(\alpha_{g}(\Psi(x)))=\varrho(\Psi(\beta_{g}(x))).
\end{align*}
Convex subsets of von Neumann algebras have the same strong--operator and weak--operator closures, so by Proposition \ref{average} we have that for $x\in M$, there is a sequence  $\{y_{n}\}_{n}$ in $conv\{\beta_{g}(x):g\in  G \}$ such that $y_{n}\rightarrow \mathbb{E}_{M^{ G }}(x)$ in $s.o.t.$ $($and in $\norm{\cdot}_{\varphi}$, as well$)$ as $n\rightarrow \infty$. Thus, 
\begin{align}\label{Tracezerounitary}
\varrho(\Psi(x))&=\varrho(\Psi(\mathbb{E}_{M^{ G }}(x))).\\
\nonumber&=\varphi(x) \text{ (by ergodicity)}\\
\nonumber&=\langle x\Omega_{\varphi},\Omega_{\varphi}\rangle_{\varphi}, \text{ for all }x\in M.
\end{align}
Thus, $\varrho\circ\Psi =\varphi$, so that $\Psi\in J_{m}(\mathfrak{M},\mathfrak{N})$.

\noindent \underline{\textbf{Step 3}}: Note that $\Psi$ could be trivial. In this case, as we demonstrate below, we can construct an abelian subalgebra $A$ of $M$ which is invariant with respect to the actions of both $\beta$ and the modular automorphism group of $\varphi$ and such that the action of $G$ on $A$ is compact. To finish the proof of the theorem, we will need to replace $\Psi$ by the joining $\mathbb{E}_{A}$ when the former is trivial. 

From Eq. \eqref{ConstructJoin} note that 
$\Psi(\cdot)=\varphi(\cdot)1_{N}$ if and only if 
$P_{\mathcal{K}}M^{\varphi}P_{\mathcal{K}}=\mathbb{C}P_{\mathcal{K}}$. This is obviously the case 
when $dim(\mathcal{K})=1$. So we argue first for the case when $dim(\mathcal{K})>1$. 

Assume that $\Psi$ is trivial. Then, $P_{\mathcal{K}}\mathbb{E}_{M^{\varphi}}(x)P_{\mathcal{K}}=\varphi(x)P_{\mathcal{K}}$, for all 
$x\in M$, as $\varrho\circ\Psi=\varphi$. Let $P: L^{2}(M,\varphi)\rightarrow L^{2}(M^{\varphi},\varphi)$ denote the orthogonal projection. 
Then, $\mathbb{E}_{M^{\varphi}}(x)P=PxP$ for all $x\in M$, by \S3 \cite{Ta}. Again by Theorem \ref{InvaraintSubspaceInsideCentralizer},
we have $P_{\mathcal{K}}\leq P$. Since 
$\varrho\circ\Psi =\varphi$ and $N=\mathbf{B}(\mathcal{K})$ and $P_{\mathcal{K}}\xi=\xi$, by Eq. \eqref{TP3} and \eqref{Tracezerounitary} for $x\in M$ we have
\begin{align}\label{GNS}
\langle\mathbb{E}_{M^{\varphi}}(x) \Omega_{\varphi},\Omega_{\varphi}\rangle_{\varphi}&=\langle x\Omega_{\varphi},\Omega_{\varphi}\rangle_{\varphi}\\ \nonumber &=\varphi(x)\\
&\nonumber =\varrho(\Psi(x))\nonumber\\&=\varrho(P_{\mathcal{K}}\mathbb{E}_{M^{\varphi}}(x)P_{\mathcal{K}})
=\langle\mathbb{E}_{M^{\varphi}}(x)\xi,\xi\rangle_{\varphi}. \nonumber
\end{align}
By Theorem \ref{InvaraintSubspaceInsideCentralizer}, $\xi\in M^{\varphi}\Omega_{\varphi}$ and we have $\langle \xi,\Omega_{\varphi}\rangle_{\varphi}=0$;  it follows that 
$M^{\varphi}$ is nontrivial. Consequently, 
\begin{align*}
L^{2}(M^{\varphi},\varphi)\ni x\Omega_{\varphi}\mapsto x\xi \in L^{2}(M^{\varphi},\varphi)
\end{align*}
implements an isometry $U: L^{2}(M^{\varphi},\varphi)\rightarrow L^{2}(M^{\varphi},\varphi)$ such that $$UxU^{*}=xUU^{*}$$ for all $x\in M^{\varphi}$ and 
$UU^{*}\in (M^{\varphi})^{\prime}\subseteq\mathbf{B}(L^{2}(M^{\varphi},\varphi))$, i.e. $UU^{*}$ projects onto 
$\overline{M^{\varphi}\xi}^{\norm{\cdot}_{\varphi}}$.  Thus, 
$U\in (M^{\varphi})^{\prime}\subseteq \textbf{B}(L^{2}(M^{\varphi},\varphi))$. 
But $\textbf{B}(L^{2}(M^{\varphi},\varphi))\supseteq (M^{\varphi})^{\prime}= \tilde{J}_{\varphi}M^{\varphi}\tilde{J}_{\varphi}$, 
where $\tilde{J}_{\varphi}$ denotes the restriction to the subspace $L^{2}(M^{\varphi},\varphi)$ of Tomita's anti--unitary $J_{\varphi}$ 
associated to $(M,\varphi)$. Thus, by the  
fundamental theorem of Tomita--Takesaki theory there exists an isometry $u\in M^{\varphi}$ such that 
$U=\tilde{J}_{\varphi}u^{*}\tilde{J}_{\varphi}$.  Since $(M^{\varphi},\varphi)$ is a tracial 
$W^{\ast}$--probability space,
\begin{align}\label{TraceZeroUnitary}
\xi =U\Omega_{\varphi}=\tilde{J}_{\varphi}u^{*}\tilde{J}_{\varphi}\Omega_{\varphi}=u\Omega_{\varphi}.
\end{align}
However, Eq. \eqref{GNS} and traciality then yield
\begin{align*}
\varphi(x)=\varphi(u^{*}xu)=\varphi(xuu^{*}), \text{ for all }x\in M^{\varphi}. 
\end{align*}
Since $\varphi$ is faithful, it follows that $uu^{*}=1$, whence $u\in M^{\varphi}$ is a trace--zero unitary. Since the construction of the joining $\Psi$ in Step 2 is independent of the choice of $\xi\in \mathcal{K}$ $($see Eq. \eqref{Tracial}, \eqref{TP3}$)$ and $\Psi$ is assumed trivial, it follows that \textit{every} unit vector in $\mathcal{K}$ is obtained via a trace zero unitary in $M^{\varphi}$ as in Eq. \eqref{TraceZeroUnitary}. We will show that this is absurd when $dim(\mathcal{K})>1$. 

Indeed, suppose $\xi_{1}=u_{1}\Omega_{\varphi}$ and $\xi_{2}=u_{2}\Omega_{\varphi}$ are orthogonal vectors in $\mathcal{K}$, with $u_{1},u_{2}$ unitaries in $M^{\varphi}$. Then $\frac{\xi_{1}+\xi_{2}}{\sqrt{2}}=(\frac{u_{1}+u_{2}}{\sqrt{2}})\Omega_{\varphi}$, and since $\Omega_{\varphi}$ is separating for the action of $M^{\varphi}$, it follows that $\frac{u_{1}+u_{2}}{\sqrt{2}}$ must be unitary and therefore $u_{1}^{*}u_{2}=-u_{2}^{*}u_{1}$. Replacing $u_{1}$ by $iu_{1}$ in the above yields that $-iu_{1}^{*}u_{2}=-iu_{2}^{*}u_{1}$. The above two equations imply that $u_{1}^{*}u_{2}=0$, a contradiction.

Consequently, $dim(\mathcal{K})=1$. It follows from Theorem \ref{eigencase} $($or otherwise$)$ that $\xi=v\Omega_{\varphi}$,
where $v\in M^{\varphi}$ is trace--zero unitary and there exists a (continuous) character $\chi$ of $G$ such that $\beta_{g}(v)=\langle \chi,{g}\rangle v$ for every $g\in G$; so $M^{\varphi}$ is also nontrival in this case.

Let $A=vN(v)\subseteq M^{\varphi}\subseteq M$. Then by Lemma \ref{compactness} the action of $G$ on $A$ is compact, and $A$ is clearly $\beta$--invariant because $v$ is an eigenvector. 
Let $\mathbb{E}_{A}$ denote the 
$\varphi$--preserving normal conditional expectation from $M$ onto $A$. If the joining $\Psi$ obtained in Step 2 were nontrivial, then we already have obtained a nontrivial joining of $\mathfrak{M}$ with an ergodic compact system $\mathfrak{N}$ in Step 2. Otherwise, replace $\mathfrak{N}$ by the ergodic compact system $(A,\varphi_{|A}, \beta_{|A},G)$, and note that  $\mathbb{E}_{A}\in J_{m}(\mathfrak{M},\mathfrak{N})$ is nontrivial. This completes the proof.
\end{proof}


\subsection{Mixing}

The phenomenon of mixing can be characterized in terms of joinings in the classical 
case $($cf. Proposition 7.3 \cite{Gl}$)$. A proof of the analogous fact in the context of abelian groups acting on von Neumann algebras appears in \S5 of \cite{Du2}. But by considering the appropriate notion of convergence, the generalization of the characterization of mixing to the context of arbitrary separable locally compact groups acting on von Neumann algebras is natural and easy in the \emph{tracial} setup. Recall that $G\ni g\mapsto \alpha_{g}\in Aut(N,\rho)$ is \textbf{mixing} if the analogue $\pi_{U}: G\ni g\mapsto U_{g}\in \textbf{B}(L^{2}(N,\rho))$ of the Koopman representation is $c_{0}$ on the orthocomplement of $\mathbb{C}1_{N}$. 

\begin{theorem}\label{Mixing}
Let $\mathfrak{N}$ be a system. If $\alpha_{ g }(x)\xrightarrow{w.o.t.} \rho(x) 1_{N}$ for all $x \in N$, then the action $\alpha$ of $ G $ is mixing. The converse is true when $\rho$ is tracial. 
\end{theorem}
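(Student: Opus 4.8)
The plan is to prove the converse by reducing the desired weak-operator convergence of $\alpha_g(x)$ to a \emph{single} matrix coefficient of the representation $\pi_U$, where the mixing hypothesis applies verbatim; the role of traciality is precisely to make this reduction available. First I would note that $\|\alpha_g(x)\|=\|x\|$ for all $g$, so the family $(\alpha_g(x))_g$ is uniformly bounded. Hence, by a routine $3\varepsilon$-approximation on the dense subspace $N\Omega_\rho\subseteq L^2(N,\rho)$, it suffices to show
\begin{align*}
\langle \alpha_g(x)\,a\Omega_\rho,\, b\Omega_\rho\rangle_\rho \;\longrightarrow\; \rho(x)\,\langle a\Omega_\rho, b\Omega_\rho\rangle_\rho \qquad (g\to\infty)
\end{align*}
for all $a,b\in N$, since weak-operator convergence to $\rho(x)1_N$ is exactly this statement for all vectors.

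Next I would use traciality to collapse the sesquilinear form into one matrix coefficient. Writing $c=ab^{\ast}$ and recalling $\alpha_g(x)\Omega_\rho=U_g(x\Omega_\rho)$, the cyclic property of the trace $\rho$ gives
\begin{align*}
\langle \alpha_g(x)\,a\Omega_\rho,\, b\Omega_\rho\rangle_\rho
&= \rho\big(b^{\ast}\alpha_g(x)a\big)
= \rho\big(\alpha_g(x)\,c\big)
= \rho\big(c\,\alpha_g(x)\big)
= \langle U_g(x\Omega_\rho),\, c^{\ast}\Omega_\rho\rangle_\rho .
\end{align*}
The intended limit is $\rho(x)\rho(c)=\rho(x)\rho(b^{\ast}a)=\rho(x)\langle a\Omega_\rho,b\Omega_\rho\rangle_\rho$, so it remains to analyze $\langle U_g(x\Omega_\rho), c^{\ast}\Omega_\rho\rangle_\rho$.

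I would then decompose along $\mathbb{C}\Omega_\rho$ and its complement: $x\Omega_\rho=\rho(x)\Omega_\rho+\xi$ and $c^{\ast}\Omega_\rho=\overline{\rho(c)}\,\Omega_\rho+\eta$ with $\xi,\eta\in L^2(N,\rho)\ominus\mathbb{C}\Omega_\rho$. Since $U_g\Omega_\rho=U_g^{\ast}\Omega_\rho=\Omega_\rho$, the two cross terms vanish and
\begin{align*}
\langle U_g(x\Omega_\rho),\, c^{\ast}\Omega_\rho\rangle_\rho \;=\; \rho(x)\rho(c) \;+\; \langle U_g\xi,\, \eta\rangle_\rho .
\end{align*}
Because $\xi,\eta$ are orthogonal to $\Omega_\rho$, the mixing hypothesis (the $c_0$ behavior of $\pi_U$ on $L^2(N,\rho)\ominus\mathbb{C}\Omega_\rho$) forces $\langle U_g\xi,\eta\rangle_\rho\to 0$ as $g\to\infty$, completing the argument.

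The main obstacle is the single step converting $\langle\alpha_g(x)\,a\Omega_\rho, b\Omega_\rho\rangle_\rho$ into the matrix coefficient $\langle U_g(x\Omega_\rho), c^{\ast}\Omega_\rho\rangle_\rho$: this rests on the cyclic identities $\rho(b^{\ast}\alpha_g(x)a)=\rho(\alpha_g(x)ab^{\ast})$ and $\rho(\alpha_g(x)c)=\rho(c\,\alpha_g(x))$, both of which require $\rho$ to be a trace. In the non-tracial case the left action $\alpha_g(x)$ cannot be shifted off the vectors in this clean way, which is exactly why the converse is asserted only under the traciality assumption.
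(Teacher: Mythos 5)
Your treatment of the converse (tracial) direction is correct, and it is essentially the paper's own argument: traciality is used to cycle $\alpha_g(x)$ off the test vectors, so that $\langle \alpha_g(x)a\Omega_\rho, b\Omega_\rho\rangle_\rho$ becomes the single matrix coefficient $\langle U_g(x\Omega_\rho), c^{\ast}\Omega_\rho\rangle_\rho$ with $c=ab^{\ast}$, after which the orthogonal decomposition along $\mathbb{C}\Omega_\rho$ and the $c_0$ hypothesis on the orthocomplement finish the argument. The paper compresses the uniform-boundedness/density reduction and the orthogonal decomposition into a single displayed limit; you spell them out explicitly, which is fine and arguably cleaner.

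The genuine problem is that your proposal, as written, proves only half of the theorem. The statement also asserts the forward implication --- if $\alpha_g(x)\xrightarrow{w.o.t.}\rho(x)1_N$ for every $x\in N$, then the action is mixing --- and your opening sentence announces a plan only for the converse; the forward direction is never addressed. It is the easy half and requires no traciality: for $x,y\in N$,
\begin{align*}
\langle U_g(x\Omega_\rho), y\Omega_\rho\rangle_\rho
=\langle \alpha_g(x)\Omega_\rho, y\Omega_\rho\rangle_\rho
\longrightarrow \rho(x)\,\langle \Omega_\rho, y\Omega_\rho\rangle_\rho
=\langle x\Omega_\rho,\Omega_\rho\rangle_\rho\,\langle \Omega_\rho, y\Omega_\rho\rangle_\rho ,
\end{align*}
and then the same kind of approximation you invoked for the converse upgrades this to the $c_0$ property: given $\xi,\eta\in L^2(N,\rho)\ominus\mathbb{C}\Omega_\rho$, choose $x,y\in N$ with $x\Omega_\rho$ close to $\xi$ and $y\Omega_\rho$ close to $\eta$; since $\langle\xi,\Omega_\rho\rangle_\rho=0$ one has $|\rho(x)|=|\langle x\Omega_\rho-\xi,\Omega_\rho\rangle_\rho|\leq\norm{x\Omega_\rho-\xi}_\rho$, and since the $U_g$ are unitaries the approximation is uniform in $g$, so $\langle U_g\xi,\eta\rangle_\rho\to 0$. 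You should add this (even in abbreviated form); without it the statement is only partially established.
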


\begin{proof}
Note that $\alpha_{ g }\in CP(N,\rho)$ for each $ g \in  G $. 
If $\alpha_{ g }(x)\xrightarrow{w.o.t.} \rho(x) 1_{N}$ for all $x \in N$.
Then for each $x,y
\in N$,
\begin{align*}
\langle U_{g} x\Omega_{\rho}, y\Omega_{\rho}\rangle_{\rho}=\rho(y^{*}\alpha_{g}(x))=\langle \alpha_{g}(x)\Omega_{\rho}, y\Omega_{\rho}\rangle_{\rho}\rightarrow \langle x\Omega_{\rho},\Omega_{\rho}\rangle_{\rho} \langle \Omega_{\rho},y\Omega_{\rho}\rangle_{\rho}.
\end{align*}
as $g\rightarrow \infty$. Thus, the action is mixing. 

Conversely, suppose the action of $ G $ is mixing and $\rho$ is tracial. Then for $x,y,z\in N$ as $g\rightarrow \infty$ we have
\begin{align*}
\langle \alpha_{g}(x)y\Omega_{\rho}, z\Omega_{\rho}\rangle_{\rho}=\rho(z^{*}\alpha_{g}(x)y)=\rho(yz^{*}\alpha_{g}(x))&\rightarrow \langle x\Omega_{\rho},\Omega_{\rho}\rangle_{\rho} \langle \Omega_{\rho},zy^{*}\Omega_{\rho}\rangle_{\rho}\\
&=\rho(x)\langle y\Omega_{\rho}, z\Omega_{\rho}\rangle_{\rho}\\
&= \langle \rho(x)1_{N} y\Omega_{\rho}, z\Omega_{\rho}\rangle_{\rho}.
\end{align*}
It follows that $\alpha_{ g }(x)\xrightarrow{w.o.t.} \rho(x) 1_{N}$ for all $x \in N$. 
\end{proof}

\section{Some Consequences}

In this section, we provide some applications of joinings $($some of which are essentially corollaries of Theorems \ref{InvaraintSubspaceInsideCentralizer} and \ref{OtherwayWmDisjointCompact}$)$ that do not arise in the context of classical ergodic theory, but are more relevant in the context of $($properly infinite$)$ von Neumann algebras. In all of the results from \ref{NontrivialCentralizer}--\ref{AbelianNormalizer}, we will freely use the machinery and notation from the proofs of Theorems \ref{InvaraintSubspaceInsideCentralizer} and \ref{OtherwayWmDisjointCompact}. 

The so-called ergodic hierarchy is often regarded as relevant for explicating the nature of randomness in deterministic dynamical systems, even though physical systems are not, in general, ergodic. The point is that almost all Hamiltonian systems are non--integrable and therefore exhibit varying degrees of randomness in large regions of their phase space. Thus, in statistical physics, the ergodic hierarchy is useful for describing the degree of randomness in these regions.  In the above spirit, we work to locate parts of the standard Hilbert space where the representation $\pi_{V}$ is compact and weakly mixing. 

\begin{corollary}\label{NontrivialCentralizer}
Let $\mathfrak{M}$ be an ergodic system such that the representation $\pi_{V}$ has a nonzero finite--dimensional reducing subspace 
orthogonal to $\Omega_{\varphi}$. Then 
$M^{\varphi}\neq \mathbb{C}1_{M}$. 
\end{corollary}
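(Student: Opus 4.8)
The plan is to obtain this as an immediate consequence of Theorem \ref{InvaraintSubspaceInsideCentralizer}. Since $\mathfrak{M}$ is ergodic by hypothesis, the group $G$ acts ergodically on $M$ preserving $\varphi$, which is precisely the standing hypothesis of Theorem \ref{InvaraintSubspaceInsideCentralizer}. Applying that theorem to the nonzero finite--dimensional invariant subspace $\mathcal{K}$ of $\pi_{V}$ gives $\mathcal{K}\subseteq M^{\varphi}\Omega_{\varphi}$. The only work remaining is to combine this containment with the orthogonality of $\mathcal{K}$ to the vacuum.

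I would then argue by contradiction. Suppose $M^{\varphi}=\mathbb{C}1_{M}$. Then $M^{\varphi}\Omega_{\varphi}=\mathbb{C}\Omega_{\varphi}$, so the containment above forces $\mathcal{K}\subseteq\mathbb{C}\Omega_{\varphi}$. Choose any nonzero vector $\xi\in\mathcal{K}$, which exists because $\mathcal{K}$ is nonzero; then $\xi=c\,\Omega_{\varphi}$ for some scalar $c\neq 0$. But by hypothesis $\mathcal{K}$ is orthogonal to $\Omega_{\varphi}$, so
\begin{align*}
0=\langle\xi,\Omega_{\varphi}\rangle_{\varphi}=c\,\langle\Omega_{\varphi},\Omega_{\varphi}\rangle_{\varphi}=c,
\end{align*}
since $\Omega_{\varphi}$ is a unit vector. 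This contradicts $c\neq 0$, and therefore $M^{\varphi}\neq\mathbb{C}1_{M}$.

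Since the entire substance of the corollary is carried by Theorem \ref{InvaraintSubspaceInsideCentralizer}, there is no genuine obstacle here; the only point to verify is that ergodicity of $\mathfrak{M}$ supplies exactly the ergodicity hypothesis that theorem requires, which is immediate from the definition of an ergodic system.
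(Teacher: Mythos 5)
Your proof is correct, but it takes a genuinely different (and more direct) route than the paper's. The paper proves this corollary by invoking the machinery of Theorem \ref{OtherwayWmDisjointCompact}: if $M^{\varphi}=\mathbb{C}1_{M}$, then the joining $\Psi(x)=\pi_{\varrho}(P_{\mathcal{K}}\mathbb{E}_{M^{\varphi}}(x)P_{\mathcal{K}})$ constructed in Step 2 of that proof would be trivial, and Step 3 would then manufacture a trace--zero unitary in $M^{\varphi}$, which is impossible when $M^{\varphi}$ is the scalars. You instead apply Theorem \ref{InvaraintSubspaceInsideCentralizer} directly: ergodicity gives $\mathcal{K}\subseteq M^{\varphi}\Omega_{\varphi}$, and triviality of $M^{\varphi}$ would force $\mathcal{K}\subseteq\mathbb{C}\Omega_{\varphi}$, contradicting the assumption that $\mathcal{K}$ is nonzero and orthogonal to $\Omega_{\varphi}$. (The passage from ``reducing'' in the corollary to ``invariant'' in the theorem's hypothesis is harmless, since a reducing subspace is in particular invariant; conversely, for a unitary representation a finite--dimensional invariant subspace is automatically reducing.) What your route buys is brevity and self--containment: it bypasses the joining $\Psi$, the conditional expectation $\mathbb{E}_{M^{\varphi}}$, and the trace--zero--unitary argument entirely. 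What the paper's route buys is economy in context -- the corollary falls out of a proof it has just completed and illustrates the joining machinery the section is advertising -- but logically it is the longer path, and both arguments ultimately rest on the same rigidity result, Theorem \ref{InvaraintSubspaceInsideCentralizer}, which also underlies Step 3 of the paper's proof.
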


\begin{proof}
If $M^{\varphi}= \mathbb{C}1_{M}$, then the $\Psi\in J_{m}(\mathfrak{M},\mathfrak{N})$ defined
in Step 2 of Theorem \ref{OtherwayWmDisjointCompact} would be trivial. In that case, proceeding to 
Step 3 of Theorem \ref{OtherwayWmDisjointCompact}, we would obtain a trace zero unitary in $M^{\varphi}$,
which is impossible.
\end{proof}

It is a classical theorem of Halmos that an ergodic automorphism of a compact abelian group is mixing. In fact, the spectral measure of the action is Lebesgue and the spectral multiplicity is infinite \cite{Hal}. Likewise, in the context of $W^{\ast}$--dynamics, an ergodic action of $\mathbb{R}$ on a von Neumann algebra preserving a KMS state is weakly mixing, i.e. zero is the only simple eigenvalue of the associated Liouvillean $($see Theorem 1.3 of \cite{JP}$)$. It can be proved, independently of the above result, that the action of the modular group associated to a faithful normal state is weakly mixing if and only if this action is ergodic. The next
corollary says more. Note that the hypothesis $M^{\varphi}=\mathbb{C}1_{M}$ implies that $M$ is necessarily a type $\rm{III}_{1}$ factor. 

\begin{corollary}\label{ErgodicimplyWeakmixing}
Let $M$ be a von Neumann algebra and $\varphi$ a fixed faithful normal state on $M$. 
Suppose $M^{\varphi}=\mathbb{C}1_{M}$, and let a separable locally compact group $ G $ act on $M$ by $\varphi$--preserving 
automorphisms. If the action is ergodic, then it is weakly mixing.
\end{corollary}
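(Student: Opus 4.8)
The plan is to argue by contraposition, reducing the weak mixing conclusion to the nonexistence of finite--dimensional invariant subspaces of the associated representation and then invoking the rigidity established in Theorem \ref{InvaraintSubspaceInsideCentralizer}. Recall from the remark following Definition \ref{WM} that if the unitary representation $\pi_{V}$ of $G$ on $L^{2}(M,\varphi)\ominus\mathbb{C}\Omega_{\varphi}$ has no finite--dimensional subrepresentation, then the system is weakly mixing. Thus it suffices to show that, under the standing hypotheses, $\pi_{V}$ admits no nonzero finite--dimensional invariant subspace orthogonal to $\Omega_{\varphi}$.

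To this end, I would suppose toward a contradiction that such a subspace $\mathcal{K}\subseteq L^{2}(M,\varphi)\ominus\mathbb{C}\Omega_{\varphi}$ exists. Since $G$ acts ergodically on $M$ while preserving $\varphi$, Theorem \ref{InvaraintSubspaceInsideCentralizer} applies verbatim and yields $\mathcal{K}\subseteq M^{\varphi}\Omega_{\varphi}$. Feeding in the hypothesis $M^{\varphi}=\mathbb{C}1_{M}$, this forces $\mathcal{K}\subseteq\mathbb{C}\Omega_{\varphi}$. But $\mathcal{K}$ was assumed nonzero and orthogonal to $\Omega_{\varphi}$, which is absurd. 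Hence $\pi_{V}$ has no finite--dimensional invariant subspace on the orthocomplement of the vacuum, and by the cited characterization the action is weakly mixing. Equivalently, the statement is precisely the contrapositive of Corollary \ref{NontrivialCentralizer} combined with that same characterization of weak mixing.

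I do not anticipate any genuine obstacle here: the entire analytic content is carried by Theorem \ref{InvaraintSubspaceInsideCentralizer}, and the centralizer hypothesis $M^{\varphi}=\mathbb{C}1_{M}$ simply collapses its conclusion $\mathcal{K}\subseteq M^{\varphi}\Omega_{\varphi}$ to the zero subspace. The only point demanding a little care is to invoke the correct implication from the remark after Definition \ref{WM}---namely that \emph{absence} of finite--dimensional subrepresentations of the Koopman--type representation $\pi_{V}$ on $L^{2}(M,\varphi)\ominus\mathbb{C}\Omega_{\varphi}$ implies weak mixing of the \emph{system}---rather than the reverse direction, which is not needed here.
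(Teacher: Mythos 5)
Your proof is correct, and it shares the paper's overall skeleton: both argue by contradiction, using the remark after Definition \ref{WM} to extract from the failure of weak mixing a nonzero finite--dimensional invariant subspace $\mathcal{K}\subseteq L^{2}(M,\varphi)\ominus\mathbb{C}\Omega_{\varphi}$. Where the two arguments diverge is in how they derive the contradiction. The paper cites Corollary \ref{NontrivialCentralizer}, whose own proof runs through the joining machinery of Theorem \ref{OtherwayWmDisjointCompact}: triviality of the centralizer forces the joining $\Psi$ of Step 2 to be trivial, and Step 3 then manufactures a trace--zero unitary in $M^{\varphi}$, which cannot exist when $M^{\varphi}=\mathbb{C}1_{M}$. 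You instead apply the rigidity result, Theorem \ref{InvaraintSubspaceInsideCentralizer}, directly: $\mathcal{K}\subseteq M^{\varphi}\Omega_{\varphi}=\mathbb{C}\Omega_{\varphi}$, contradicting the fact that $\mathcal{K}$ is nonzero and orthogonal to the vacuum. Your route is shorter and logically leaner --- it bypasses the joining construction entirely, and in fact your two--line contradiction also yields an immediate proof of Corollary \ref{NontrivialCentralizer} itself --- whereas the paper's route keeps the emphasis on the joining--theoretic apparatus that \S 7 is meant to showcase. Since both arguments ultimately rest on the same rigidity theorem, the difference is one of economy of citation rather than of mathematical substance, but your version is the more self--contained of the two.
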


\begin{proof}
Suppose to the contrary the action of $ G $ is not weakly mixing. Then, there is a nontrivial 
finite--dimensional reducing subspace $\mathcal{K}$ of $\pi_{V}$. By Corollary  
\ref{NontrivialCentralizer}, $M^{\varphi}\neq \mathbb{C}1_{M}$, which is  
absurd. 
\end{proof}

\begin{remark}
\emph{Note that there exists a faithful normal state $\varphi$ on the hyperfinite $\rm{III}_{1}$ factor arising from a gauge--invariant generalized free state satisfying the KMS condition on the CAR algebra whose
centralizer is trivial, see Theorem 1 of \cite{HeTa} and its corollary for details. The action of $\mathbb{R}$ via
modular automorphisms in the construction is, in fact, mixing. Also note that the hyperfinite $\rm{III}_{1}$ factor  has a dense family of ergodic 
states (Proposition 3 \cite{Cl}). } 
\end{remark}

\begin{corollary}\label{EigenSystemFinite}
Let $\mathfrak{M}$ be an ergodic system and let $\mathfrak{N}$ be its eigensubsystem, i.e. $N$ is the von Neumann
subalgebra of $M$ generated by the eigenfunctions of the action. Then, $N$ is a finite von Neumann algebra and
$\varphi_{|N}$ is a faithful, normal trace state. 
\end{corollary}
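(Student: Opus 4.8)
The plan is to show that $N$ is in fact contained in the centralizer $M^{\varphi}$, after which both the finiteness of $N$ and the traciality of $\varphi|_{N}$ follow at once from standard properties of the centralizer. The essential input is Theorem~\ref{eigencase}, which forces every eigenfunction of the action to lie inside $M^{\varphi}$.

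First I would verify that each eigenfunction belongs to $M^{\varphi}$. Let $0\neq x\in M$ be an eigenfunction for $\pi_{V}$, so that $V_{g}(x\Omega_{\varphi})=\langle g,\chi\rangle x\Omega_{\varphi}$ for all $g\in G$ and some continuous character $\chi$ of $G$. Since $\Omega_{\varphi}$ is separating for $M$ and $x\neq 0$, the vector $x\Omega_{\varphi}$ is nonzero, and $\xi=\norm{x\Omega_{\varphi}}_{\varphi}^{-1}x\Omega_{\varphi}$ is a unit eigenvector with the same character. By Theorem~\ref{eigencase} there is a unitary $u\in M^{\varphi}$ with $\xi=u\Omega_{\varphi}$, whence $x\Omega_{\varphi}=\norm{x\Omega_{\varphi}}_{\varphi}\,u\Omega_{\varphi}$. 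Using once more that $\Omega_{\varphi}$ is separating, I conclude $x=\norm{x\Omega_{\varphi}}_{\varphi}\,u\in M^{\varphi}$.

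Because $M^{\varphi}$ is a von Neumann subalgebra of $M$ and every eigenfunction lies in it, the von Neumann algebra $N$ generated by the eigenfunctions satisfies $N\subseteq M^{\varphi}$. It is standard that the restriction of $\varphi$ to its centralizer is tracial: for $a,b\in M^{\varphi}$ the defining property of the centralizer (applied with $y=b$) gives $\varphi(ab)=\varphi(ba)$. Hence $\varphi|_{M^{\varphi}}$, and therefore $\varphi|_{N}$, is a trace; being a restriction of the faithful normal state $\varphi$, it is moreover faithful and normal.

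Finally, a von Neumann algebra admitting a faithful normal tracial state is finite, so $N$ is finite and $\varphi|_{N}$ is a faithful normal tracial state, as claimed. The argument is essentially immediate once Theorem~\ref{eigencase} is available; the only point requiring any care is the passage from the eigenvector statement in $L^{2}(M,\varphi)$ back to an element of the algebra, which is handled by the separating property of $\Omega_{\varphi}$. I do not anticipate any substantial obstacle beyond correctly invoking Theorem~\ref{eigencase}.
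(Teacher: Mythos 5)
Your proposal is correct and follows essentially the same route as the paper, whose own proof simply states that the corollary follows directly from Theorem \ref{eigencase}; your write-up supplies exactly the details that are implicit there (passing from eigenvectors in $L^{2}(M,\varphi)$ to unitaries in $M^{\varphi}$ via the separating vector, concluding $N\subseteq M^{\varphi}$, and invoking traciality of $\varphi$ on the centralizer together with the standard fact that a faithful normal trace forces finiteness).
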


\begin{proof}
This follows directly from Theorem \ref{eigencase}.
\end{proof}

The next result generalizes Corollary \ref{ErgodicimplyWeakmixing} and locates a large ``region of randomness'' of the dynamics. 

\begin{corollary}\label{LocateWeakMixing}
Let $\mathfrak{M}$ be an ergodic system. Then the representation $\pi_{V}$ of $ G $ restricted to 
$L^{2}(M,\varphi)\ominus L^{2}(M^{\varphi},\varphi)$ is weakly mixing. 
\end{corollary}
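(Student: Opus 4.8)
The plan is to reduce the statement to Theorem \ref{InvaraintSubspaceInsideCentralizer} together with the standard characterization of weak mixing for a unitary representation recalled after Definition \ref{WM}: a unitary representation is weakly mixing precisely when it admits no nonzero finite--dimensional invariant subspace. Thus it suffices to show that the restriction of $\pi_{V}$ to $L^{2}(M,\varphi)\ominus L^{2}(M^{\varphi},\varphi)$ has no nonzero finite--dimensional invariant subspace. Since this equivalence is a general fact about unitary representations, it applies to the restricted representation and not only to the one on $L^{2}(M,\varphi)\ominus\mathbb{C}\Omega_{\varphi}$.

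First I would check that $L^{2}(M^{\varphi},\varphi)$ is a reducing subspace for $\pi_{V}$, so that its orthogonal complement is again $\pi_{V}$--invariant and the restricted representation is well defined. Since each $\beta_{g}$ preserves $\varphi$, it commutes with the modular automorphism group $\{\sigma_{t}^{\varphi}\}_{t\in\mathbb{R}}$ (as recalled in \S2), hence $\beta_{g}(M^{\varphi})=M^{\varphi}$. Consequently $V_{g}(M^{\varphi}\Omega_{\varphi})=M^{\varphi}\Omega_{\varphi}$, and by continuity $V_{g}$ preserves the closure $L^{2}(M^{\varphi},\varphi)$; because $\pi_{V}$ is unitary, the orthogonal complement is preserved as well. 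Note also that $\Omega_{\varphi}\in L^{2}(M^{\varphi},\varphi)$ since $1_{M}\in M^{\varphi}$, so that $L^{2}(M,\varphi)\ominus L^{2}(M^{\varphi},\varphi)$ sits inside the usual ambient complement $L^{2}(M,\varphi)\ominus\mathbb{C}\Omega_{\varphi}$.

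The heart of the argument is then a short contradiction. Suppose $\pi_{V}$ restricted to $L^{2}(M,\varphi)\ominus L^{2}(M^{\varphi},\varphi)$ is not weakly mixing. Then there is a nonzero finite--dimensional $\pi_{V}$--invariant subspace $\mathcal{K}\subseteq L^{2}(M,\varphi)\ominus L^{2}(M^{\varphi},\varphi)$. Viewing $\mathcal{K}$ as a finite--dimensional invariant subspace of $\pi_{V}$ on all of $L^{2}(M,\varphi)$ (invariance inside the subspace is exactly invariance under the ambient action) and using the ergodicity of $\mathfrak{M}$, Theorem \ref{InvaraintSubspaceInsideCentralizer} forces $\mathcal{K}\subseteq M^{\varphi}\Omega_{\varphi}\subseteq L^{2}(M^{\varphi},\varphi)$. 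But $\mathcal{K}$ is orthogonal to $L^{2}(M^{\varphi},\varphi)$, so $\mathcal{K}=\{0\}$, a contradiction.

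Since essentially all of the analytic content is carried by Theorem \ref{InvaraintSubspaceInsideCentralizer}, I do not expect any genuine obstacle here; the result is a corollary in the true sense. The only points requiring care are the two bookkeeping observations above, namely confirming that $L^{2}(M^{\varphi},\varphi)$ is reducing for $\pi_{V}$ so the restriction makes sense, and confirming that the "no finite--dimensional subrepresentation" criterion for weak mixing is being applied to the correct (restricted) representation.
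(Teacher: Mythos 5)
Your proof is correct and matches the paper's intended argument: the paper states this result without an explicit proof, presenting it as a direct consequence of Theorem \ref{InvaraintSubspaceInsideCentralizer}, which is precisely the reduction you carry out. Your two bookkeeping checks (that $L^{2}(M^{\varphi},\varphi)$ is reducing for $\pi_{V}$ because each $\beta_{g}$ commutes with the modular group and hence preserves $M^{\varphi}$, and that the no--finite--dimensional--subrepresentation criterion applies to the restricted representation) are exactly the details the paper leaves implicit.
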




The next result of this section is of independent interest in understanding normalizers of masas coming from
crossed products. 

\begin{corollary}\label{AbelianNormalizer}
Let $\mathfrak{M}$ be an ergodic system with $ G $ a countable discrete abelian group. Then the 
normalizing algebra $N(L( G ))^{\prime\prime}\subset M\rtimes_{\beta}  G $ is contained
inside the centralizer $(M\rtimes_{\beta}  G )^{\varphi}$. 
\end{corollary}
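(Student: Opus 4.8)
The plan is to identify the state on the crossed product $P=M\rtimes_{\beta}G$ as the dual state $\widetilde{\varphi}=\varphi\circ E$, where $E\colon P\to M$ is the canonical conditional expectation, and to exploit the standard fact (valid because $\beta$ preserves $\varphi$) that the modular automorphism group of $\widetilde{\varphi}$ acts by $\sigma_t^{\varphi}$ on $M$ and fixes each canonical unitary $\lambda_g$. Consequently both $L(G)$ and the centralizer $M^{\varphi}$ lie inside $P^{\widetilde{\varphi}}=(M\rtimes_\beta G)^{\widetilde\varphi}$. Since $P^{\widetilde{\varphi}}$ is a von Neumann algebra, it suffices to show that every normalizing unitary $u\in N(L(G))$ lies in $P^{\widetilde{\varphi}}$; I will do this by proving that every Fourier coefficient $x_g=E(u\lambda_g^{*})\in M$ of $u=\sum_g x_g\lambda_g$ actually lies in $M^{\varphi}$, whence $u\in vN(M^{\varphi}\cup L(G))\subseteq P^{\widetilde{\varphi}}$.

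The key algebraic step is to extract a $G$-equivariance relation among the coefficients $x_g$. First I would note that, since $u$ normalizes $L(G)$, the element $e_g:=u^{*}\lambda_g u\,\lambda_g^{*}=\mathrm{Ad}(u^{*})(\lambda_g)\lambda_g^{*}$ is a unitary in the \emph{abelian} algebra $L(G)$, and that $\lambda_g u\lambda_g^{*}=u\,e_g$. A short computation using that $G$ is abelian (so $\lambda_g\lambda_h\lambda_g^{*}=\lambda_h$) shows $g\mapsto e_g$ is a homomorphism $G\to\mathcal{U}(L(G))$. Expanding both sides of $\lambda_g u\lambda_g^{*}=ue_g$ in the canonical unitaries, the left side equals $\sum_h\beta_g(x_h)\lambda_h$ and the right side equals $\sum_l\big(\sum_k\epsilon^{(g)}_k x_{l-k}\big)\lambda_l$, where $e_g=\sum_k\epsilon^{(g)}_k\lambda_k$, giving the central identity
\begin{equation*}
\beta_g(x_l)=\sum_k\epsilon^{(g)}_k\,x_{l-k}\qquad(g,l\in G).
\end{equation*}

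Next I would diagonalize this relation over the compact dual $\widehat{G}$ (here $G$ countable discrete is used, so $\widehat{G}$ is a compact metrizable abelian group with Haar probability measure $m$, and $L(G)\cong L^{\infty}(\widehat{G},m)$ via $\lambda_g\mapsto\langle g,\cdot\rangle$, with $\widetilde{\varphi}|_{L(G)}=\int\,dm$). Since $\sum_g\|x_g\Omega_{\varphi}\|_{\varphi}^{2}=\widetilde{\varphi}(u^{*}u)=1$, the Fourier transform $\widehat{u}(\chi):=\sum_j\langle j,\chi\rangle\,x_j\Omega_{\varphi}$ defines an element of $L^{2}(\widehat{G},L^{2}(M,\varphi))$ with $x_l\Omega_{\varphi}=\int_{\widehat{G}}\overline{\langle l,\chi\rangle}\,\widehat{u}(\chi)\,dm(\chi)$. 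Feeding the central identity into this and using that $g\mapsto e_g(\chi)$ is a character $\langle g,\Psi(\chi)\rangle$ of $G$ for a.e.\ $\chi$ (by the homomorphism property) yields the fiberwise eigenrelation $V_g\widehat{u}(\chi)=\langle g,\Psi(\chi)\rangle\,\widehat{u}(\chi)$ for a.e.\ $\chi$. Thus, for a.e.\ $\chi$, $\widehat{u}(\chi)$ is an eigenvector of $\pi_V$, so Theorem \ref{eigencase} forces $\widehat{u}(\chi)\in M^{\varphi}\Omega_{\varphi}$ (a scalar multiple of an eigen-unitary applied to $\Omega_{\varphi}$). Since $L^{2}(M^{\varphi},\varphi)$ is a closed subspace containing $\widehat{u}(\chi)$ a.e., integrating against $\overline{\langle l,\cdot\rangle}$ gives $x_l\Omega_{\varphi}\in L^{2}(M^{\varphi},\varphi)$, hence $x_l=\mathbb{E}_{M^{\varphi}}(x_l)\in M^{\varphi}$ for every $l$, completing the argument.

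The genuinely substantive input is Theorem \ref{eigencase} applied \emph{fiberwise}: the whole point is that it converts the measurable family of $\pi_V$-eigenvectors $\widehat{u}(\chi)$ into membership in $M^{\varphi}\Omega_{\varphi}$ without requiring the representation $\pi_V$ to have purely atomic (discrete) spectrum, which is what lets the argument succeed even when $\mathfrak{M}$ has a large weakly mixing part. I expect the main technical care — rather than a conceptual obstacle — to be in making the direct-integral manipulation rigorous: justifying the $L^{2}(\widehat{G})$-convergence of the Fourier expansion, the a.e.\ measurability of $\chi\mapsto\Psi(\chi)$ and of the eigenvector selection, and the interchange of $V_g$ with the integral; the abelianness of $G$ (and hence of both $L(G)$ and $\widehat{G}$) is used essentially throughout and cannot be dispensed with.
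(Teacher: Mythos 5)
Your proof is correct, but it takes a genuinely different route from the paper's. The paper's proof is essentially two lines: it invokes Theorem 2.1 of \cite{NeSt} to identify $N(L(G))''$ with $M_{0}\rtimes_{\beta}G$, where $M_{0}$ is the von Neumann subalgebra of $M$ generated by the eigenfunctions of the action, and then applies Corollary \ref{EigenSystemFinite} (itself a direct consequence of Theorem \ref{eigencase}) to place $M_{0}$ inside $M^{\varphi}$, whence $M_{0}\rtimes_{\beta}G\subseteq(M\rtimes_{\beta}G)^{\varphi}$. You never use the Neshveyev--St{\o}rmer structure theorem: instead you reprove exactly the containment needed by direct Fourier analysis on the crossed product --- the homomorphism $g\mapsto e_{g}=u^{*}\lambda_{g}u\lambda_{g}^{*}\in\mathcal{U}(L(G))$, diagonalization over $\widehat{G}$, and a fiberwise application of Theorem \ref{eigencase}. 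What the paper's route buys is brevity and the sharper identification $N(L(G))''=M_{0}\rtimes_{\beta}G$; what yours buys is self-containedness (only standard crossed-product facts plus Theorem \ref{eigencase} are needed), and it makes transparent that the sole substantive input is the eigenvector rigidity of Theorem \ref{eigencase} --- which is, in effect, the same engine driving the \cite{NeSt}-based argument. Two small points. First, your worry about measurable selection is unnecessary: all you use is that $\widehat{u}(\chi)$ lies a.e.\ in the \emph{fixed} closed subspace $L^{2}(M^{\varphi},\varphi)$, and countability of $G$ yields a single null set off which the eigenrelation holds for all $g$ simultaneously, so the Bochner integral $\int_{\widehat{G}}\overline{\langle l,\chi\rangle}\,\widehat{u}(\chi)\,dm(\chi)$ automatically lands in $L^{2}(M^{\varphi},\varphi)$. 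Second, the final passage from ``all Fourier coefficients $x_{g}\in M^{\varphi}$'' to $u\in vN(M^{\varphi}\cup L(G))$ deserves one explicit sentence: the $\widetilde{\varphi}$-preserving conditional expectation of $M\rtimes_{\beta}G$ onto $M^{\varphi}\rtimes_{\beta}G$ (which exists since the latter is pointwise fixed by $\sigma^{\widetilde{\varphi}}$) fixes $u$, because it fixes the vector $u\Omega_{\widetilde{\varphi}}$ and $\Omega_{\widetilde{\varphi}}$ is separating.
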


\begin{proof}
By Theorem 2.1 of \cite{NeSt} and the remark following it, the normalizing algebra 
$N(L( G ))^{\prime\prime}$ of $L( G )$ 
is equal to  $M_{0}\rtimes_{\beta}  G $, where $M_{0}$ is the von Neumann subalgebra of 
$M$ spanned by the eigenfunctions of the action. By Corollary \ref{EigenSystemFinite}, $M_{0}$ 
is a finite von Neumann algebra and $\varphi_{|M_{0}}$ is trace. It follows that 
$N(L( G ))^{\prime\prime}\subseteq (M\rtimes_{\beta} G )^{\varphi}$.
\end{proof}


\end{document}